\theoremstyle{plain}
\newtheorem{athm}{Theorem}
\newtheorem{theorem}{Theorem}[section]
\newtheorem{proposition}[theorem]{Proposition}
\newtheorem{corollary}[theorem]{Corollary}
\newtheorem{lemma}[theorem]{Lemma}
\theoremstyle{definition}
\newtheorem{example}{Example}[section]
\newtheorem{definition}[theorem]{Definition}
\newtheorem{remark}[theorem]{Remark}
\DeclareMathOperator{\esssup}{ess \ sup}
\DeclareMathOperator{\diam}{diam}
\DeclareMathOperator{\lip}{Lip}
\begin{document}

\begin{abstract}
In this paper, we establish the quasi-compactness of the transfer operator associated with skew product systems that are semi-conjugate to piecewise convex maps with a countably infinite number of branches. These non-invertible skew products admit discontinuities, with the critical set confined to a countable collection of fibers. Furthermore, we demonstrate that such systems possess an invariant measure whose disintegration along the fibers exhibits bounded variation; a concept introduced and developed in this work.
\end{abstract}

\title[Quasi C. for systems semi-conjugate to p. convex maps]{Quasi-compactness and statistical properties for discontinuous systems semi-conjugate to piecewise convex maps with countably many branches}

\author[Rafael Lucena]{Rafael Lucena}

\date{\today }

\dedicatory{In loving memory of my dear friend Fellyppe Carlos Santos de Lima.}
\keywords{Statistical Stability, Transfer
Operator, Equilibrium States, Skew Product.}

\address[Rafael Lucena]{Universidade Federal de Alagoas, Instituto de Matemática - UFAL, Av. Lourival Melo Mota, S/N
	Tabuleiro dos Martins, Maceio - AL, 57072-900, Brasil}
\email{rafael.lucena@im.ufal.br}
\urladdr{www.im.ufal.br/professor/rafaellucena}
\maketitle


\section{Introduction}

In the study of deterministic dynamical systems, a central goal is to understand long‐term statistical properties such as mixing rates, decay of correlations, and the existence and uniqueness of invariant measures. A particularly powerful approach to these issues is the analysis of the associated transfer (or Frobenius-Perron) operators, whose spectral properties frequently yield precise insights into the system’s ergodic and statistical behavior. In particular, the presence of a spectral gap in the transfer operator is a strong indicator of rapid (exponential) decay of correlations for an appropriate class of observables.

Building on the rich legacy of one‐dimensional systems, most notably, the seminal work of Lasota and Yorke on piecewise monotonic maps with finitely many branches, we consider in this paper a class of two‐dimensional skew product systems of the form $F(x,y)=(f(x),G(x,y))$, where the horizontal component $f$ is a piecewise convex map with a countably infinite number of branches, and the vertical component $G$ contracts almost every vertical fiber while admitting discontinuities. More precisely, the discontinuities of $G$ are confined to a countable union of vertical fibers. This framework not only generalizes classical results for one‐dimensional maps with finitely many branches but also introduces additional challenges due to the interplay between expansion in the horizontal direction and contraction along the fibers.

Our approach is inspired by the works in \cite{RG2} and \cite{GLu}. In the first, P. Gora and A. Rajput demonstrated that the Frobenius-Perron operator for piecewise convex maps with countably many branches is quasi-compact, alongside establishing several other significant properties of its invariant measure. In the second work, the authors proved the existence of a spectral gap for the transfer operator associated with Lorenz-like systems, introducing innovative ideas for constructing anisotropic spaces that are well-suited to handling skew products. In essence, our approach shows that even non-invertible systems, which feature a highly intricate discontinuity structure, can exhibit robust ergodic and statistical properties. Building on two previous works, in this paper we present a study of skew-products that contract almost every fiber in the sense of Lebesgue and that are semi-conjugate to piecewise convex maps with an infinite, countable number of branches, admitting infinitely many discontinuity points (possible non-countable), provided these are confined to a countable set of fibers. We prove that the transfer operator associated with these systems exhibits a spectral gap in an appropriate vector space of signed measures. Consequently, we demonstrate that the map has a unique invariant measure in the considered space and that this measure is regular, in the sense that its disintegration along the fibers exhibits bounded variation (BV). As a result of this regularity, we further show that these systems exhibit exponential decay of correlations for Lipschitz observables.

In Section \ref{kjdfkjdsfkj}, we present the basic assumptions of the main system and some key results concerning it. In Section \ref{kjdfkjdsfkj}, we also present several examples to illustrate the applicability of the theory developed in this paper. Next, in Section \ref{utrietyyighgdf}, we construct vector spaces of signed measures that are suitable for studying the transfer operator of $F$; on these spaces, the operator exhibits a spectral gap. Subsequently, in Section \ref{xbcvhgsafd}, we define a vector space of signed measures with greater regularity than those introduced in Section \ref{utrietyyighgdf}. We study the action of the transfer operator on an invariant subset and demonstrate that this action satisfies a Lasota–Yorke inequality. Consequently, we can employ the convergence properties obtained in Section \ref{utrietyyighgdf} to conclude that the unique $F$-invariant measure in these spaces possesses a regular disintegration along the fibers. Finally, in Section \ref{dfjgsghdfjasdf}, we apply the previous results to show that the system in question exhibits exponential decay of correlations over the space of Lipschitz observables.

This paper presents four main results, namely Theorems A, B, C, and D. In Theorem A, we prove that $F$ has a unique invariant measure in a vector space of signed measures, with respect to which the transfer operator associated with $F$ exhibits a spectral gap. This spectral gap property is the key result of the paper and is stated in Theorem B. In Theorem C, we show that the $F$-invariant measure from Theorem A possesses an even stronger property: its disintegration has bounded variation, a concept introduced in Section \ref{xbcvhgsafd}. Finally, in Theorem D, we present an application of Theorems A, B, and the additional regularity established in Theorem C, demonstrating that the system $F$ exhibits exponential decay of correlations for the class of Lipschitz observables.

\textbf{Acknowledgments} 

I extend my heartfelt thanks to my beloved wife, Cibelle, for her unconditional support. I am also deeply grateful to my friend, Davi Lima, for his consistent availability and willingness to discuss mathematical ideas with me. Finally, I would like to thank the referee for their valuable contributions to the improvement of this article.

This work was partially supported by  
\begin{enumerate}
	\item[(1)] FAPEAL (Alagoas-Brazil) Grant E:60030.0000002330/2022; 
	\item[(2)] CNPq (Brazil) Grant Alagoas Din\^amica: 409198/2021-8.
\end{enumerate}

\section{Preliminary Framework and Examples}\label{kjdfkjdsfkj}

Throughout this article, we use the following notation: $I=[0,1]$, $\mathcal{B}$ denotes the Borel $\sigma$ algebra of $I$ and $m$ is the normalized Lebesgue measure on $\mathcal{B}$.  The Euclidean distance on $I$ is denoted by $d_1$ and $K$ represents a compact metric space with metric $d_2$. With regard to \( d_2 \), we assume \( \mathrm{diam}(K) = 1 \) to avoid unnecessary multiplicative constants. Finally, we define $\Sigma: = I \times K$ and consider it a metric space endowed with the product metric $d_1 + d_2$.

In this section, we establish the hypotheses for the dynamical system under study:$$F: \Sigma \longrightarrow \Sigma, \ F=(f(x), G(x,y)),$$where $f: I \longrightarrow I$ and $G: \Sigma \longrightarrow K$.

We first present some results from \cite{RG2} concerning $f$ and derive a few key lemmas that will be useful in the analysis of the main system, $F$. Additionally, we provide important examples illustrating how the theory developed in this section can be applied.

\subsection{Hypothesis on the basis map $f$ and Spectral Gap}

\subsubsection{Piecewise convex map with countably infinite number of branches}

\begin{definition} Let $I=[0,1]$ and let $\mathcal {P}=\{I_i=(a_i,b_i)\}_{i=1}^\infty$ be a  family of open disjoint subintervals of $I$ such that $m\left(I\setminus\bigcup_{i\ge 1}I_i\right)=0$.
	We say that a transformation $f:\bigcup_{i\ge 1}I_i \longrightarrow I$ is called a \emph{piecewise convex map with countably many branches} on the partition $\mathcal {P}$ if it satisfies the following:\\
	(1) For $i=1,2,3..., f_i=f_{|I_i}$ is an increasing convex differentiable function with $\lim_{x\to a_i^+}f_i(x)=0$.
	Define $f_i(a_i)=0$ and $f_i(b_i)=\lim_{x\to b_i^-}f_i(x)$. The values $f_i'(a_i)$ are also defined by continuity;\\
	(2) The derivative of $f$ satisfies $$\sum_{i\ge 1}\frac 1{f_i'(a_i)} < +\infty;$$\\
	(3) If $x=0$ is not a limit point of the partition points, then we have $f'(0)=1/\beta>1$, for some $0<\beta<1$.
	By $\mathcal T$ we will denote the set of maps satisfying conditions (1)-(3). 
	\label{def1}
\end{definition}

 Define $\mathcal P^{(n)}:=\mathcal P \bigvee f^{-1}(\mathcal P)\bigvee\dots\bigvee f^{n-1}(\mathcal P) $. We denote the branches of $f^n$ by $f^{(n)}_i$. Then, $\mathcal P^{(n)}=\left\{I_i^{(n)}=\left(a_i^{(n)},b_i^{(n)}\right)\right\}_{i=1}^\infty$ is a countably infinite family of open disjoint subintervals of $I$ corresponding to $f^n$. We have the following results:

\begin{theorem}
Let $\mathcal{P}$ be a partition for $f$ and $\mathcal{P}^{(n)}$ denote the partition for $f^n$. If $f \in \mathcal{T}$ then $f^n \in \mathcal{T}$ as well, i.e.,\\
\textbf{(a)} If $f\in \mathcal{T}$ then $f^n$  is piecewise increasing on $\mathcal{P}^{(n)}$.\\
\textbf{(b)} $f^n$  is piecewise convex on $\mathcal{P}^{(n)}$.\\
\textbf{(c)}  $f^n$  is piecewise differentiable on $\mathcal{P}^{(n)}$.\\
\textbf{(d)} $\lim_{x \rightarrow \left(a_i^{(n)}\right)^+} f_i^{(n)}(x)=0$ for $f^n$ on $\mathcal{P}^{(n)}$.\\
\textbf{(e)}The condition $(2)$ holds for $f^n$. i.e.,
\begin{equation*}
\sum_{i\ge 1}\frac {1}{\left(f^{(n)}_i\right)'\left(a_i^{(n)}\right)} < +\infty.
\end{equation*}
\textbf{(f)} If $x=0$ is not a limit point of the partition points and condition $(3)$ holds for $\tau$  then it holds for $\tau^n$.
\end{theorem}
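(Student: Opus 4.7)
The plan is to argue by straightforward induction on $n$, with the base case $n=1$ being exactly the hypothesis $f\in\mathcal{T}$. The central structural observation I will use throughout is that each atom $I_i^{(n)}\in\mathcal{P}^{(n)}$ is determined by a unique admissible address $(k_1,\ldots,k_n)$, meaning $f^{j-1}(I_i^{(n)})\subseteq I_{k_j}$ for $j=1,\ldots,n$, and on this atom the branch is the composition $f_i^{(n)}=f_{k_n}\circ\cdots\circ f_{k_1}$ of branches of $f$. Parts (a) and (c) then reduce to the elementary facts that compositions of increasing (respectively differentiable) functions are increasing (respectively differentiable).

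For (b), I will use the fact that if $g,h$ are increasing and convex, then $g\circ h$ is convex because $(g\circ h)''=g''(h)\,(h')^{2}+g'(h)\,h''\geq 0$, both terms being nonnegative; an easy induction then yields convexity of each $f_i^{(n)}$. For (d), at the left endpoint $a_i^{(n)}$ of the atom with address $(k_1,\ldots,k_n)$, continuity of each branch on its closure together with hypothesis (1) of Definition \ref{def1} gives inductively $f^{j-1}(a_i^{(n)})=a_{k_j}$, so $\lim_{x\to(a_i^{(n)})^{+}}f_i^{(n)}(x)=f_{k_n}(a_{k_n})=0$.

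For (e), the chain rule applied at $a_i^{(n)}$ (interpreted as a one-sided limit) yields $(f_i^{(n)})'(a_i^{(n)})=\prod_{j=1}^{n}f'(a_{k_j})$. Since distinct atoms of $\mathcal{P}^{(n)}$ carry distinct admissible addresses, Tonelli applied to the product structure yields
\[\sum_{i\ge 1}\frac{1}{(f_i^{(n)})'(a_i^{(n)})}\leq \sum_{k_1,\ldots,k_n\ge 1}\prod_{j=1}^{n}\frac{1}{f'(a_{k_j})}=\left(\sum_{k\ge 1}\frac{1}{f'(a_k)}\right)^{n}<+\infty,\]
where finiteness of the factor comes from hypothesis (2) applied to $f$. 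For (f), assume that $0$ is isolated among the partition points of $\mathcal{P}$ and that $f'(0)=1/\beta>1$. Then $f_1(0)=0$ makes $0$ a fixed point, and since $f$ is a continuous increasing branch on a right-neighborhood of $0$ contained in $I_1$, such a neighborhood is mapped into itself by every $f^{j}$, so no preimage of a partition point of $\mathcal{P}$ under any $f^{j}$ with $0\leq j\leq n-1$ accumulates at $0$; hence $0$ is isolated in the partition points of $\mathcal{P}^{(n)}$. The chain rule finally gives $(f^{n})'(0)=f'(0)^{n}=1/\beta^{n}>1$, establishing condition (3) for $f^{n}$.

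The only place requiring genuine care is the bookkeeping in (e): the nontrivial step is to verify that the indexing of branches of $f^{n}$ by admissible addresses is injective, so that the sum over $i$ is dominated by the unrestricted sum over all $(k_1,\ldots,k_n)\in\mathbb{N}^{n}$, which then factors as a product of $n$ copies of the hypothesis (2) series. All the other parts follow by routine composition arguments and a clean induction on $n$.
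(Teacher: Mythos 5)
First, note that the paper does not actually prove this theorem: its stated proof is the single line ``See \cite{RG2}'', so there is no in-paper argument to compare yours against; I therefore assess your argument on its own terms. Your overall architecture --- addressing the atoms of $\mathcal{P}^{(n)}$ by admissible words $(k_1,\dots,k_n)$ and treating each branch of $f^n$ as the composition $f_{k_n}\circ\cdots\circ f_{k_1}$ --- is the natural and correct one, and parts (a), (c), (f) go through essentially as you say. (For (f), your phrase ``mapped into itself'' is backwards: near $0$ the map expands, since $f_1(x)\ge f_1'(0)x>x$ by convexity; but the point you actually need, namely that a sufficiently small right-neighborhood of $0$ has its first $n-1$ images contained in a gap $(0,c)$ free of partition points, is true and yields the conclusion.)

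There are, however, two genuine gaps. The first is the repeated claim that $f^{j-1}\bigl(a_i^{(n)}\bigr)=a_{k_j}$ for every $j$. This is false in general: the left endpoint of the atom with address $(k_1,\dots,k_n)$ is determined by whichever constraint $f^{j-1}(x)\in I_{k_j}$ is binding, and at the non-binding levels one has $f^{j-1}(a_i^{(n)})>a_{k_j}$ (already visible for a two-branch piecewise linear map and the atom $(1/4,1/2)$ of $\mathcal{P}^{(2)}$ with address $(1,2)$, whose left endpoint is not $a_1=0$). Part (d) survives because once the binding level $j^*$ is reached one gets $f^{j^*}(a_i^{(n)})=0$ in the one-sided limit, which forces $a_{k_{j^*+1}}=0$ and hence $f^{j}(a_i^{(n)})=0$ for all $j\ge j^*$; but this tail argument needs to be made, since your induction as stated breaks at the first non-binding level. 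More seriously, in (e) the identity $(f_i^{(n)})'(a_i^{(n)})=\prod_{j}f'(a_{k_j})$ is wrong as an equality: the chain rule gives $\prod_{j}f'\bigl(f^{j-1}(a_i^{(n)})\bigr)$, and to dominate $1/(f_i^{(n)})'(a_i^{(n)})$ by $\prod_{j}1/f'(a_{k_j})$ you must invoke the fact that each $f_{k_j}'$ is nondecreasing on $I_{k_j}$ (a consequence of convexity together with differentiability), so that $f'(f^{j-1}(a_i^{(n)}))\ge f'(a_{k_j})$. Without that monotonicity step the comparison with the $n$-fold product of the series from hypothesis (2) does not follow, and the proof of (e) does not close. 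The second gap is in (b): you justify convexity of $g\circ h$ via $(g\circ h)''\ge 0$, but Definition \ref{def1} only assumes the branches are differentiable, not twice differentiable; use instead the elementary chain $g(h(\lambda x+(1-\lambda)y))\le g(\lambda h(x)+(1-\lambda)h(y))\le \lambda g(h(x))+(1-\lambda)g(h(y))$, valid for $h$ convex and $g$ convex and increasing. All of these defects are repairable, but as written the argument for (e) in particular is not complete.
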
  

\begin{proof}
	See \cite{RG2}.
\end{proof}

\subsubsection{Piecewise expanding map with countable number of branches}

\begin{definition} \label{def2}Let $I=[0,1]$ and let $\mathcal P=\{I_i=(a_i,b_i)\}_{i=1}^\infty$ be a countably infinite family of open disjoint subintervals of $I$ such that Lebesgue measure of $I\setminus\bigcup_{i\ge 1}I_i$ is zero. Let $f$ be a map from $\displaystyle \bigcup_{i\geq 1} I_i$ into the interval $I$, such that for each $i\geq 1$, $f_{|I_i}$ extends to a homeomorphism $f_i$ of $[a_i,b_i]$ onto its image.\\ Let
	\begin{equation*}
		g(x) = 
		\begin{cases}
			\frac{1}{|f_i'(x)|}, & \text{for } x\in I_i, i=1,2,\dots \\
			0, & \text{elsewhere}
		\end{cases}.
	\end{equation*}
	We assume $\sup_{x\in I} |g(x)| \le \beta <1$. Then, we say $f$ is a piecewise expanding map with countably many branches and denote this class by $\mathcal  T_E$.
\end{definition}

The following lemmas were established in \cite{RG2}, and their proofs are omitted here.
\begin{lemma}\label{lemma1} If $f\in \mathcal{T}$ in the sense of Definition \ref{def1}, then some iterate of $f^n\in \mathcal T_E$ in the sense of Definition \ref{def2}.
\end{lemma}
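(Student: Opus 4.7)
By Theorem 2.4, $f^n \in \mathcal{T}$ for every $n \geq 1$, so each iterate is piecewise increasing and convex on the refined partition $\mathcal{P}^{(n)}$. Convexity forces $(f^n)'$ to be nondecreasing on each branch $I_i^{(n)}$, so
\[
\inf_{x \in I_i^{(n)}}(f^n)'(x) = (f^n)'(a_i^{(n)}),
\]
the right-derivative at the left endpoint. Establishing $f^n \in \mathcal{T}_E$ thus reduces to producing some $n$ for which $\inf_{i \geq 1}(f^n)'(a_i^{(n)}) \geq 1/\tilde\beta > 1$ for some $\tilde\beta \in (0,1)$.

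The chain rule gives
\[
(f^n)'(a_i^{(n)}) = \prod_{\ell=0}^{n-1} f'\bigl(f^\ell(a_i^{(n)})\bigr),
\]
and I would analyse this product along the orbit of $a_i^{(n)}$. Since $f^n_i(a_i^{(n)}) = 0$ and $f_j(a_j) = 0$ for every $j$, this orbit must terminate at $0$, hitting a left endpoint of some branch of $\mathcal{P}$ at the penultimate step. By convexity, $f'\bigl(f^\ell(a_i^{(n)})\bigr) \geq f'(a_{k_\ell})$, where $I_{k_\ell}$ is the branch containing $f^\ell(a_i^{(n)})$, so each factor is dominated from below by a left-endpoint derivative of $f$. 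Condition (2) ensures $f_j'(a_j) \to \infty$, so for any threshold $1/\tilde\beta$ the set $B = \{j : f_j'(a_j) < 1/\tilde\beta\}$ of \emph{slow} branches is finite, and visits to branches outside $B$ each contribute a factor $\geq 1/\tilde\beta > 1$ to the product. When condition (3) applies, every iterate landing at $0$ contributes the additional expanding factor $f'(0) = 1/\beta$.

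The main obstacle is controlling the contribution from orbits whose iterates persist in $\bigcup_{j \in B} I_j$ for many consecutive steps, since each such step may carry a factor $< 1$. My plan is a transfer-operator-style estimate: using the fact that the inverse branch $\phi_j$ of each slow $f_j$ satisfies $\|\phi_j'\|_\infty \leq 1/f_j'(a_j)$ by concavity, one inductively shows that the Lebesgue measure of
\[
\bigl\{x \in I : f^\ell(x) \in \bigcup_{j \in B} I_j \text{ for } \ell = 0, \ldots, k-1\bigr\}
\]
is bounded by $\bigl(\sum_{j \in B} 1/f_j'(a_j)\bigr)^k$. Choosing the threshold $1/\tilde\beta$ near $1$ keeps $B$ small enough that the tail estimate from condition (2) yields $\sum_{j \in B} 1/f_j'(a_j) < 1$, forcing every orbit to exit the slow region within a uniformly bounded number of steps. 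Combined with the expansion harvested at the fixed point $0$ supplied by condition (3) (or, in its alternative, by the fast branches accumulating near $0$), this gives $(f^n)'(a_i^{(n)}) \to \infty$ uniformly in $i$ as $n \to \infty$, placing $f^n$ in $\mathcal{T}_E$ for all sufficiently large $n$.
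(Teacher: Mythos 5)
The paper does not actually prove this lemma: it is quoted from the Gora--Rajput preprint \cite{RG2} with the proof omitted, so there is no in-paper argument to compare against. Your reduction is sound as far as it goes: each iterate $f^n$ is piecewise increasing, convex and differentiable on $\mathcal{P}^{(n)}$, so $(f^n)'$ is nondecreasing on each branch, and it suffices to bound $(f^n)'\bigl(a_i^{(n)}\bigr)$ away from $1$ uniformly in $i$ for some $n$; the chain-rule decomposition, the observation that the orbit of $a_i^{(n)}$ eventually reaches $0^+$, and the pointwise bound $f'(y)\ge f_k'(a_k)$ on $I_k$ are all correct.

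The gap is in your escape argument for the slow region, and it is twofold. First, the inequality $\sum_{j\in B}1/f_j'(a_j)<1$ cannot be arranged: every $j\in B$ satisfies $1/f_j'(a_j)>\tilde\beta$, so with $\tilde\beta$ near $1$ the sum already exceeds $1$ once $B$ contains two branches (and the setting genuinely allows several branches with $f_j'(a_j)<1$; see Example \ref{slopes2}). The tail estimate coming from condition (2) controls the sum over large indices, i.e.\ over the \emph{fast} branches (since $f_j'(a_j)\to\infty$), and says nothing about the finitely many slow ones, which sit at the head of the series. Second, and more fundamentally, a decaying bound on the Lebesgue measure of the set of points that spend $k$ consecutive steps in $\bigcup_{j\in B}I_j$ would only show that the set of orbits persisting forever is $m$-null; it cannot force \emph{every} orbit to exit within a uniformly bounded time, and the points you must control --- the endpoints $a_i^{(n)}$, which are preimages of partition points --- form exactly such a null set. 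The mechanism that actually bounds the sojourn time is pointwise, not measure-theoretic: if $f'(y)<1$ for $y\in I_j$, then convexity and $f_j(a_j^+)=0$ give $f(y)\le f'(y)(y-a_j)<y-a_j$, and since the finitely many branches on which $f'$ dips below $1$ are bounded away from $0$ (the branch at $0$, if there is one, is fast by condition (3)), say $a_j\ge\delta_0>0$ for those $j$, each such step decreases the orbit value by at least $\delta_0$; hence at most about $1/\delta_0$ consecutive sub-unit-derivative factors can occur. Even after this repair you would still owe the quantitative bookkeeping showing that the contraction accumulated during such an excursion --- which ends with the orbit landing near $0$, precisely because $f(y)\le f'(y)$ there --- is compensated by the expansion subsequently harvested near $0$; your sketch asserts the conclusion that $(f^n)'(a_i^{(n)})\to\infty$ uniformly without carrying out that balance. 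As written, the proposal does not close.
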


\begin{lemma}\label{existencef} Let $f \in \mathcal{T}$. Then there exists a unique normalized absolutely continuous $f$-invariant measure $m_1$. The dynamical system $([0,1],\mathcal{B}, m_1; f)$ is exact and the density $\displaystyle h_1=\frac{dm_1}{dm}$ is bounded and decreasing.
\end{lemma}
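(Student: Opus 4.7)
The plan is to construct the invariant density $h_1$ as a fixed point of the Perron--Frobenius operator $P_f$, exploiting the key structural feature of piecewise convex maps: $P_f$ preserves the convex cone of bounded non-negative decreasing functions. Writing
$P_f h(x) = \sum_{i \ge 1} \frac{h(f_i^{-1}(x))}{f_i'(f_i^{-1}(x))} \mathbf{1}_{f(I_i)}(x)$,
each summand is decreasing in $x$: since $f_i$ is increasing convex with $f_i(a_i)=0$, the inverse $f_i^{-1}$ is increasing and concave and $1/f_i'$ is decreasing along the branch, so their composition makes the summand decreasing on $f(I_i)$, while the vanishing boundary condition $f_i(a_i)=0$ guarantees that the indicator $\mathbf{1}_{f(I_i)}$ does not destroy monotonicity. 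Applied to $h \equiv 1$, condition (2) of Definition \ref{def1} yields $\|P_f \mathbf{1}\|_\infty \le \sum_{i \ge 1} 1/f_i'(a_i) < +\infty$, so $P_f$ indeed maps the cone of bounded decreasing densities into itself.

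Next I would form the Cesàro averages $h_N = \frac{1}{N} \sum_{k=0}^{N-1} P_f^k \mathbf{1}$, each of which is decreasing with $\int h_N \, dm = 1$. To secure uniform $L^\infty$-boundedness of this sequence, I invoke Lemma \ref{lemma1} to obtain an iterate $f^{n_0} \in \mathcal{T}_E$; the standard Lasota--Yorke--Rychlik inequality for piecewise expanding maps with countably many branches then provides a uniform $BV$ (hence $L^\infty$) bound on $\{P_f^{k n_0} \mathbf{1}\}_k$, which combined with the decreasing property and the $L^1$-contractivity of $P_f$ extends to all iterates $P_f^k \mathbf{1}$. Helly's selection theorem then extracts from $(h_N)$ a pointwise convergent subsequence whose limit $h_1$ is bounded, decreasing, and integrates to one; standard continuity arguments identify $h_1$ as a fixed point of $P_f$, producing the invariant measure $m_1 = h_1\, m$.

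For uniqueness and exactness, I would again reduce to $f^{n_0}$. Any absolutely continuous $f$-invariant density is automatically $f^{n_0}$-invariant, and quasi-compactness of $P_{f^{n_0}}$ on $BV$ confines the invariant densities to a finite-dimensional space. The decreasing structure together with the convexity hypothesis (all branches emanate from $0$) prevents the support from splitting into non-trivial components, forcing uniqueness. Exactness is classical for piecewise expanding maps in $\mathcal{T}_E$ whose transfer operator admits a spectral gap on $BV$, and exactness of $f^{n_0}$ propagates to $f$ since the invariant measure is common to both. The main obstacle I expect is the interplay between the two regimes: the cone-preservation technique needs the convex structure inherent to $\mathcal{T}$, while the quantitative spectral bounds and quasi-compactness require the piecewise expanding iterate in $\mathcal{T}_E$. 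In particular, lifting the $BV$ bound on $\{P_f^{k n_0} \mathbf{1}\}$ to a uniform $L^\infty$ bound on the full sequence $\{P_f^k \mathbf{1}\}$, compatibly with Helly's theorem, relies delicately on the decreasing property furnished by the first paragraph.
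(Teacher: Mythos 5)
The paper offers no internal proof of this lemma --- it is imported verbatim from \cite{RG2} --- so the comparison is against the standard Lasota--Yorke/G\'ora--Rajput strategy, which is indeed the one you follow. The existence half of your argument is sound: the verification that $\func{P}_f$ preserves the cone of bounded non-negative decreasing functions (using convexity of each $f_i$, so that $(f_i^{-1})'=1/f_i'\circ f_i^{-1}$ is decreasing, and the anchoring $f_i(a_i)=0$, so that $\chi_{f(I_i)}$ is itself decreasing), the bound $\|\func{P}_f h\|_\infty\le\|h\|_\infty\sum_i 1/f_i'(a_i)$ from condition (2), the uniform $BV_m$ bound on all iterates obtained by writing $n=kn_0+j$ and noting that each $\func{P}_f^j\mathbf{1}$ is bounded and decreasing (hence of variation at most its sup), and the Helly/Ces\`aro extraction of a bounded decreasing fixed density are all correct and essentially what \cite{RG2} does.

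The genuine gap is in uniqueness and exactness. Quasi-compactness of $\func{P}_{f^{n_0}}$ on $BV_m$ only confines the peripheral spectrum to finitely many roots of unity with finite-dimensional eigenspaces; it is perfectly compatible with several ergodic absolutely continuous invariant measures and with cyclic, non-mixing behaviour. Your sentence ``the decreasing structure together with the convexity hypothesis\dots prevents the support from splitting into non-trivial components'' is precisely the assertion that has to be proved, and no argument is supplied; likewise ``exactness is classical for piecewise expanding maps whose transfer operator admits a spectral gap on $BV$'' is circular, since quasi-compactness does not yield a spectral gap --- simplicity of the eigenvalue $1$ and absence of other peripheral eigenvalues is exactly the uniqueness-plus-mixing statement at issue. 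What is missing is the Lasota--Yorke lower-bound-function step (or an equivalent analysis of the ergodic components): every branch image $f_i(I_i)=(0,c_i)$ is an interval anchored at $0$, every decreasing density $h$ with $\int h\,dm=1$ satisfies $\int_0^t h\,dm\ge t$ so that mass concentrates near $0$, and condition (3) of Definition \ref{def1} controls the dynamics at $0$; together these produce a nonzero $\eta\ge 0$ with $\liminf_n \func{P}_f^n h\ge\eta$ uniformly over decreasing densities, which forces a single ergodic component, excludes nontrivial roots of unity in the peripheral spectrum, and gives exactness. As written, your proof establishes the existence of a bounded decreasing invariant density but not its uniqueness, nor the exactness of $([0,1],\mathcal{B},m_1;f)$.
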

\subsection{Hypothesis on the fiber map $G$}

We suppose that $G: \Sigma \longrightarrow K$ satisfies:

\begin{enumerate}
	\item [(H1)] $G$ is uniformly contracting on $m$-almost every vertical fiber $\gamma_x :=\{x\}\times K$: there is $0 \leq \alpha <1$ such that for $m$-a.e. $x\in M$ it holds%
	\begin{equation}
		d_2(G(x,z_{1}),G(x,z_2))\leq \alpha d_2(z_{1},z_{2}), \quad \forall
		z_{1},z_{2}\in K.  \label{contracting1}
	\end{equation}
\end{enumerate}We denote the set of all vertical fibers $\gamma_x$, by $\mathcal{F}^s$: $$\mathcal{F}^s:= \{\gamma _x:=\{ x\}\times K; x \in I \} .$$ When no confusion is possible, the elements of $\mathcal{F}^s$ will be denoted simply by $\gamma$, instead of $\gamma _x$.
\begin{remark}
We note that elements of $\mathcal{F}^s$ i.e., $\gamma_x(=\{x\}\times K)$ for $x \in I$, are naturally identified with their "base point" $x$. For this reason, throughout this article, we will occasionally use the same notation for both, without explicit distinction. In other words, the symbol $\gamma$ may refer either to an element of $\mathcal{F}^s$ or to a point in the interval $I$, depending on the context. For instance, if $\phi:I \longrightarrow \mathbb{R}$ is a real-valued function, the expressions $\phi(x)$ and $\phi(\gamma)$ have the same meaning, as we are implicitly identifying $\gamma$ with $x$.
\end{remark}

\begin{enumerate}
	\item [(H2)] Let $I_1, \cdots, I_{s}, \cdots$ be a partition of $I$ given by definitions \ref{def1} or \ref{def2}. Suppose that for all $s \in \mathbb{N}$ it holds
	\begin{equation}\label{oityy}
		|G_s|_{\lip}:= \sup _y\sup_{x_1, x_2 \in I_s} \dfrac{d_2(G(x_1,y), G(x_2,y))}{d_1(x_1,x_2)}< \infty.
	\end{equation}
\end{enumerate}And denote by $|G|_{\lip}$ the following constant
\begin{equation}\label{jdhfjdh}
	|G|_{\lip} := \max_{s=1, 2, \cdots} \{|G_s|_{\lip}\}.
\end{equation}

\begin{remark}
The condition (H2) implies that $G$ may be discontinuous on the sets $\partial I_i \times K$ for all $i=1,2 \cdots$, where $\partial I_i$ denotes the boundary of $I_i$.
\end{remark}

\begin{remark}\label{uyrytuert}
	In some cases, $G$ can be discontinuous along any countably infinite collection of vertical lines of the form $\{x\} \times K$, $x \in I$.
	
	We illustrate this with two cases. First, suppose that $F=(f,G)$ is such that $f$ satisfies Definition \ref{def2} and $G$ satisfies condition (H2) with respect to the partitions $\mathcal{P}$ and $\mathcal{P}_2$, respectively. Then, $f$ also satisfies Definition \ref{def2} with respect to the refined partition $\mathcal{P} \bigvee \mathcal{P}_2$. Consequently, $G$ satisfies (H2) on $\mathcal{P} \bigvee \mathcal{P}_2$ as well.
	
	The second case is more subtle. Suppose now that $f$, instead of satisfying Definition \ref{def2}, satisfies Definition \ref{def1} with respect to a partition $\mathcal{P}$, and $G$ satisfies (H2) on a different partition $\mathcal{P}_2 \neq \mathcal{P}$. This situation can still be handled, for instance, if condition (H2) is preserved under iteration. That is, if for each $n$, the iterate $F^n=(f^n, G_n)$ is such that $G_n$ satisfies (H2). Thus, by Lemma \ref{lemma1}, there exists an iterate $f^n$ that satisfies Definition \ref{def2} with respect to the partition $\mathcal{P}^{(n)}$. In particular, $f^n$ satisfies Definition \ref{def2} on the partition $(\mathcal{P} \bigvee \mathcal{P}_2)^{(n)}$, and since (H2) is preserved, it follows that $G_n$ satisfies (H2) on the same partition $(\mathcal{P} \bigvee \mathcal{P}_2)^{(n)}$.
	
	An example where this second case may occur is when $f|_P$ is Lipschitz on each $P \in \mathcal{P}$, and the Lipschitz constants of the family $\{f|_P\}_{P \in \mathcal{P}}$ are uniformly bounded. However, such uniform boundedness is incompatible with condition (2) of Definition~\ref{def1} when the partition $\mathcal{P}$ is infinite. Consequently, examples satisfying this scenario can only arise when $\mathcal{P}$ is finite.

	Another admissible situation, where $G$ can be discontinuous along any countably infinite collection of vertical lines of the form $\{x\} \times K$, $x \in I$, occurs when condition (H2) is satisfied by $G_n$ at the iterate $n$ provided by Lemma~\ref{lemma1} (see hypothesis (H3) below).
	
\end{remark}

\begin{enumerate}
	\item [(H3)] There exists an iterate $k \in \mathbb{N}$ such that $F^k=(f^k, G_k)$ satisfies $$\alpha_4:=\alpha^k \esssup \frac{1}{|(f^k)'|} < 1,$$where the essential supremum is taken with respect to $m$. Moreover, $G_k$ satisfies $(H1)$ (with contraction rate $\alpha ^k$) and $(H2)$. 
\end{enumerate}

\begin{definition}\label{closed}
	We say that condition (H2) is \emph{closed} if, for every \( n \geq 1 \), the function \( G_n \) satisfies condition (H2), where \( G_n \) denotes the fiber component associated to the \( n \)-th iterate of the map \( F = (f, G) \), i.e., $F^n=(f^n, G_n)$.
\end{definition}

\begin{remark}
	With this definition, the second case described in Remark~\ref{uyrytuert} occurs if condition (H2) is closed. Moreover, if (H2) is closed  then (H3) is satisfied, as well. 
\end{remark}

Let \( m_1 \) be the \( f \)-invariant measure whose existence is guaranteed by Lemma~\ref{existencef}. Proposition~\ref{kjdhkskjfkjskdjf} below establishes the existence and uniqueness of an \( F \)-invariant measure \( \mu_0 \) that projects onto \( m_1 \). The proof is omitted here and can be found in Theorem~9.4 and Proposition~9.5 of \cite{DR}, where all details are provided. 

Therefore, if \( F \colon \Sigma \to \Sigma \), with \( F = (f, G) \), where \( f \in \mathcal{T} \) and \( G \) satisfies conditions (H1), then there exists a unique \( F \)-invariant measure \( \mu_0 \).

\begin{proposition}\label{kjdhkskjfkjskdjf}
	Let $m_1$ be an $f$-invariant probability. If $F$ satisfies (H1), then there exists an unique measure $\mu_0$ on $M \times K$ such that $\pi_1{_\ast}\mu_0 = m_1$ and for every continuous function $\psi \in C^0 (M \times K)$ it holds 
	\begin{equation*}
		\lim {\int{\inf_{\gamma \times K} \psi \circ F^n }dm_1(\gamma)}= \lim {\int{\sup_{\gamma \times K} \psi \circ F^n}dm_1 (\gamma)}=\int {\psi}d\mu_0. 
	\end{equation*}Moreover, the measure $\mu_0$ is $F$-invariant.
\end{proposition}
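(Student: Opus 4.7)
The plan is to construct $\mu_0$ directly from the limits in the statement and apply the Riesz representation theorem. For $\psi \in C^0(\Sigma)$, introduce
$$\Psi_n^+(x) := \sup_{y \in K}\psi(F^n(x,y)), \qquad \Psi_n^-(x) := \inf_{y \in K}\psi(F^n(x,y)),$$
and define
$$L(\psi) := \lim_{n\to\infty} \int_I \Psi_n^+(x)\, dm_1(x) = \lim_{n\to\infty}\int_I \Psi_n^-(x)\, dm_1(x),$$
provided both limits exist and agree. Once $L$ is shown to be a positive linear functional with $L(1)=1$, its representing Borel probability $\mu_0$ will be the invariant measure, and the remaining claims (projection onto $m_1$, $F$-invariance, uniqueness) will follow by short squeeze arguments.

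The coincidence of the two limits rests on two observations. First, (H1) and an easy induction on $n$ give $d_2(G_n(x,y_1), G_n(x,y_2)) \le \alpha^n d_2(y_1,y_2)$ for $m_1$-a.e.\ $x$, where $G_n$ is the fiber component of $F^n$; hence $\diam(\pi_2(F^n(\{x\}\times K))) \le \alpha^n$, and with $\omega_\psi$ the modulus of continuity of the uniformly continuous $\psi$,
$$0 \le \Psi_n^+(x) - \Psi_n^-(x) \le \omega_\psi(\alpha^n) \longrightarrow 0.$$
Second, the identity $\psi\circ F^{n+k}(x,y) = (\psi\circ F^n)(F^k(x,y))$ combined with $F^k(\{x\}\times K)\subset\{f^k(x)\}\times K$ yields $\Psi_{n+k}^+(x) \le \Psi_n^+(f^k(x))$; integrating against the $f$-invariant $m_1$ shows that $n \mapsto \int \Psi_n^+\, dm_1$ is nonincreasing, and symmetrically $n \mapsto \int \Psi_n^-\, dm_1$ is nondecreasing. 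Together with the uniform collapse above, both sequences converge to the common value $L(\psi)$.

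The remaining properties come from squeezing. Linearity is forced by the chain
$$\Psi_n^-(\psi_1) + \Psi_n^-(\psi_2) \le \Psi_n^-(\psi_1+\psi_2) \le \Psi_n^+(\psi_1+\psi_2) \le \Psi_n^+(\psi_1) + \Psi_n^+(\psi_2),$$
integrating and passing to the limit on both ends. Positivity and $L(1)=1$ are immediate, so Riesz yields a Borel probability $\mu_0$ with $L(\psi) = \int \psi\, d\mu_0$. For $\psi = \phi\circ\pi_1$ with $\phi \in C^0(I)$, both $\Psi_n^\pm$ collapse to $\phi\circ f^n$, and $f$-invariance of $m_1$ gives $L(\phi\circ\pi_1) = \int\phi\, dm_1$; hence $(\pi_1)_*\mu_0 = m_1$. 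The relation $\Psi_n^\pm(\psi\circ F) = \Psi_{n+1}^\pm(\psi)$ then gives $L(\psi\circ F) = L(\psi)$, i.e.\ $F_*\mu_0 = \mu_0$. For uniqueness, any $F$-invariant probability $\nu$ projecting to $m_1$ admits a fiberwise disintegration $\{\nu_x\}$; $F$-invariance together with the trivial squeeze $\int_K \psi(F^n(x,y))\, d\nu_x(y) \in [\Psi_n^-(x), \Psi_n^+(x)]$ force $\int \psi\, d\nu = L(\psi) = \int \psi\, d\mu_0$ for every $\psi \in C^0(\Sigma)$, hence $\nu = \mu_0$.

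The main technical point I anticipate is the Borel measurability of $x \mapsto \Psi_n^\pm(x)$: only (H1) is assumed here, so a priori $G(x,\cdot)$ is only required to be Lipschitz on an $m_1$-full set of fibers. On that full-measure set, however, $y \mapsto F^n(x,y)$ is continuous, so $\Psi_n^\pm(x)$ agrees with the sup (respectively, inf) of $\psi\circ F^n(x,\cdot)$ over a countable dense subset of the compact $K$, giving Borel measurability outside an $m_1$-null set; completing $m_1$ is enough to make every integral above well-defined. Apart from this measurability check, the argument is purely algebraic.
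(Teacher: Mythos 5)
Your construction of the functional $L$ is sound and is the natural route to this result: the monotonicity of $n\mapsto\int\Psi_n^{+}\,dm_1$ (via $\Psi_{n+k}^{+}\le\Psi_n^{+}\circ f^{k}$ and the $f$-invariance of $m_1$), the fiberwise collapse $\Psi_n^{+}-\Psi_n^{-}\le\omega_\psi(\alpha^{n})$ from (H1), the squeeze giving linearity, the Riesz representation, the identification $\pi_{1*}\mu_0=m_1$, and the measurability remarks are all essentially correct. (Note the paper gives no proof of its own here; it defers entirely to Theorem 9.4 and Proposition 9.5 of \cite{DR}.)

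There is, however, a genuine gap at the $F$-invariance step. From $\Psi_n^{\pm}(\psi\circ F)=\Psi_{n+1}^{\pm}(\psi)$ you correctly obtain $L(\psi\circ F)=L(\psi)$, but to conclude $F_*\mu_0=\mu_0$ you need the identity $\int\psi\circ F\,d\mu_0=L(\psi\circ F)$, and the Riesz representation only guarantees $\int\varphi\,d\mu_0=L(\varphi)$ for \emph{continuous} $\varphi$. Here $\psi\circ F$ is in general not continuous: $f\in\mathcal{T}$ has countably many discontinuities and, under (H1) alone, $G$ is merely measurable in the base variable (its discontinuity set can project onto a set of full measure), which is exactly the regime this paper is built for. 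Equivalently: although $\mu_n:=F^{n}_*(m_1\times\delta_{y_0})$ converges weak-$*$ to $\mu_0$ and $F_*\mu_n=\mu_{n+1}\to\mu_0$, one cannot pass $F_*$ through the weak-$*$ limit, since $F_*$ is not weak-$*$ continuous for a discontinuous $F$. The step can be repaired, but not for free. One way: observe that $\psi\circ F$ is a Carath\'eodory function (measurable in $x$, uniformly equicontinuous in $y$ over an $m$-full set of fibers), use a Scorza--Dragoni/Lusin argument to produce a continuous $\tilde\varphi$ with $\|\tilde\varphi\|_\infty\le\|\psi\|_\infty$ agreeing with $\psi\circ F$ on $E\times K$ for a compact $E$ with $m_1(I\setminus E)<\varepsilon$, and then bound both error terms --- $\left|\int(\psi\circ F-\tilde\varphi)\,d\mu_0\right|$ using $\pi_{1*}\mu_0=m_1$, and $|L(\psi\circ F)-L(\tilde\varphi)|$ using $m_1(f^{-n}(I\setminus E))=m_1(I\setminus E)$ --- by $2\|\psi\|_\infty\varepsilon$. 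Without an argument of this kind, the inference ``$L(\psi\circ F)=L(\psi)$, i.e.\ $F_*\mu_0=\mu_0$'' does not follow; this is precisely where the discontinuity of the system, and not mere algebra, enters the proof.
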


\subsection{Examples}\label{dkjfhksjdhfksdf}

\subsubsection{Examples for the base map $f$.}

\begin{example}[Slopes]\label{slopes1}
	Consider $I=[0,1]$ and let $m$ be the Lebesgue measure on $I$. Let $(I_i)_{i \in \mathbb{N}}$ be a family of pairwise disjoint open intervals such that $m\left(I \setminus \bigcup_{i \geq 1} I_i\right) = 0$. Consider $f: \bigcup _{i=1}^\infty I_i  \longrightarrow [0,1]$ such that $f_i:=f|_{I_i}$ is linear with slope $k_i$. Moreover, assume that $\inf_{i \in \mathbb{N}} k_i > 1$ and $\sum_{i=1}^\infty k_i^{-1} < +\infty$. Thus, $f$ satisfies Definition \ref{def2}. In particular, $f$ satisfies corollaries \ref{oirtyuv} and \ref{coro1}, and Proposition \ref{jkkgnhn}. Further details can be found in~\cite{RLK}.
\end{example}

\begin{example}[Slopes]\label{slopes2}
	Let $I = [0,1]$, and let $m$ denote the Lebesgue measure on $I$. Consider a countable family of pairwise disjoint open intervals $(I_i)_{i \in \mathbb{N}}$ such that $m\left(I \setminus \bigcup_{i \geq 1} I_i\right) = 0$. Define a map $f: \bigcup_{i=1}^\infty I_i \to [0,1]$ such that each branch $f_i := f|_{I_i}$ is linear with slope $k_i$ in a way that condition (1) of Definition~\ref{def1} is satisfied.
	
	Assume that $0 < k_i < 1$ for only finitely many indices $i \geq 2$ (excluding $k_1$, $I_1=(0, b_1)$), and denote this finite set of indices by $\mathbb{N}_1$. Moreover, suppose that $\inf_{i \in \mathbb{N}_1^c} k_i > 1$ and that $\sum_{i=1}^\infty k_i^{-1} < +\infty$. 
	
	Under these assumptions, the map $f$ satisfies Definition~\ref{def1}, but not Definition~\ref{def2} at time one. In particular, $f$ satisfies corollaries~\ref{oirtyuv}, ~\ref{coro1}, as well as Proposition~\ref{jkkgnhn} and $f'(x)<1$ for all $x \in  \bigcup_{i \in \mathbb{N}_1} I_i$.
\end{example}

\begin{example}[Gauss Map]	
	\label{gauss}
	Let $\mathcal{P}=(I_i)_{i=1}^\infty$ be the partition of $[0,1]$ where $I_i = (\frac{1}{i+1}, \frac{1}{i})$ for all $i$. Define  $f: \bigcup _{i=1}^\infty I_i : \longrightarrow [0,1]$, by $f(x)=\dfrac{1}{x}-i$ for all $x \in I_i$. Note that $\inf (f^2)' \geq 2$, so it satisfies Definition~\ref{def2}, as discussed in Remark~\ref{remark1}. Moreover, \( f \) admits a mixing probability measure \( m_1 \) absolutely continuous with respect to \( m \), whose density is given by $h_1(x) = \frac{1}{(1+x) \log 2}.$
	For further details, see \cite{Kva}, \cite{WP}, and \cite{WP2}. In particular, $f$ satisfies Corollaries \ref{oirtyuv} and \ref{coro1}, Proposition \ref{jkkgnhn} and Remark \ref{gfhjhfdf}.
\end{example}

\begin{example}[$\mathcal{P}$-Lüroth Maps]	
	\label{luroth}
	Let $\mathcal{P}=(I_i)_{i=1}^\infty$ be a countably infinite partition of $[0,1]$, consisting of non-empty, right-closed and left-open intervals. It
	is assumed throughout that the elements of $\mathcal{P}$ are ordered from right to left, starting from $I_1$, and that these elements accumulate only at $0$. Let $a_i := m(I_i)$ be the Lebesgue measure of $I_i$, and denote by $t_i := \sum_{k=i}^{\infty} a_k$ the Lebesgue measure of the $i$-th tail of $\mathcal{P}$. The $\mathcal{P}$-Lüroth map $f_\mathcal{P}:[0,1] \longrightarrow [0,1]$ is given by
	
	\begin{equation*}
		f_\mathcal{P} (x) = 
		\begin{cases}
			\dfrac{t_i-x}{a_i} , & \text{for all } x \in I_i \ \text{and all } i \geq 1 \\
			0, & \text{elsewhere}
		\end{cases}.
	\end{equation*}The $\mathcal{P}$-Lüroth map satisfies Definition \ref{def2} (see Remark \ref{remark1}).  In particular, $f$ satisfies Corollaries \ref{oirtyuv} and \ref{coro1}, and Proposition \ref{jkkgnhn}. For more details, see \cite{MSO}.
\end{example}

\subsubsection{Examples for the fiber map $G$.}
\begin{example}\label{h3}[Discontinuous Maps: constant coefficients] Let $F=(f, G)$ be the measurable map, where $f$ satisfies definitions \ref{def1} or \ref{def2} for some iterate $f^n$. Consider a sequence of real numbers $\{\alpha _i\}_{i=1}^\infty$ s.t $ 0\leq \alpha _i < \alpha _{i+1} \leq \alpha < 1$ for all $i$. Let  $G:[0,1] \times [0,1] \longrightarrow [0,1]$ be the function defined by $G(x,y)= \alpha_i y$ for all $x \in I_i$, for all $i \geq 1$ and all $y \in [0,1]$. It is straightforward to see that $G$ is discontinuous on the sets $\{\partial {I_i}\} \times K$ for all $i \geq 1$. Moreover, $G$ satisfies (H2) since $|G|_{\lip} =0$ (see equation (\ref{jdhfjdh})). Since $G$ is an $\alpha$-contraction, we have that $G$ also satisfies (H1). 
\end{example}

\begin{example}[Discontinuous Maps: Lipschitz coefficients]  Let $F=(f, G)$ be the measurable map, where $f$ satisfies definitions \ref{def1} or \ref{def2} for some iterate $f^n$ and denote the atoms of the partition $\mathcal{P}$ by $I_i:=(a_i,b_i)$ for all $i$ (as in definitions \ref{def1} and \ref{def2}). Consider a real number $0 \leq \alpha < 1$ and a sequence of real and Lipschitz functions $\{h_i\}_{i=1}^{\infty}$ such that $h_i:I_i \longrightarrow [0,1]$ for all $i \geq 1$, $h_i(b_i) \neq h_{i+1}(a_{i+1}) $, $ 0\leq h_i \leq \alpha < 1$ for all $i\geq 1$ and $\sup _{i\geq1} L(h_i) < \infty$ where $L(h_i)$ denotes the Lipschitz constant of $h_i$. Define $G:[0,1] \times [0,1] \to [0,1]$ by $G(x, y) = h_i(x) y$ for all $x \in I_i$ and $y \in [0,1]$.  It is easy to see that $G$ is discontinuous on the sets $\{\partial {I_i}\} \times K$ for all $i \geq 1$. Moreover, $G$ satisfies (H2) since $\sup _{i\geq1} L(h_i) < \infty$ (see equation (\ref{jdhfjdh})). Since, $G$ is an $\alpha$-contraction, we have that $G$ also satisfies (H1). 	
\end{example}

\begin{example}[(H2) is closed under iteration]
	Suppose that $F = (f, G)$, where $f$ is taken from Example~\ref{slopes2} and $G$ from Example~\ref{h3}. Then (H2) is closed under iteration, and (H3) is consequently satisfied. Moreover, the family $\{\alpha_i\}_i$ can be chosen so that the condition $\alpha \esssup g < 1$ is not satisfied at time one. In particular the second case of Remark \ref{uyrytuert} is satisfied, as well.
\end{example}

\subsection{Spectral gap for piecewise convex map with countable number of branches}

In this section, we present some results concerning the dynamics of $f$. Most of these results have already been discussed in \cite{RG2}, which is why their proofs are omitted from this text. For the results not covered in \cite{RG2} and whose proofs deviate from standard arguments, we will provide a proof.

\normalfont{A piecewise expanding map $f$ is non-singular with respect to $m$ and the Frobenious-Perron operator corresponding to $f$ is the linear operator $\func{P}_f: L^1_m \longrightarrow L^1_m$ that is given by the formula,
	\begin{equation}
	\func{P}_f h(x)=\sum_{i=1}^\infty \frac{h\left(f_i^{-1}(x)\right)}{|f'\left(f_i^{-1}(x)\right)|} \chi_{f(I_i)}(x)=\sum_{y\in\tau^{-1}(x)} h(y)g(y) \ \ \text{for} \ m-\text{a.e.} \ x \in I,
	\end{equation}for all $h \in L^1_m$.

	\begin{definition}\label{bv}
	Given $h:I \rightarrow \mathbb{R}$ we define variation of $h$ on a subset $J \subset I$ by
	\begin{equation*}
		V_J(h)=\sup\{\sum_{i=1}^k |h(x_i)-h(x_{i-1})|\},
	\end{equation*}
	where the supremum is taken over all finite sequences $(x_1,x_2,...x_k) \subset J$, where $x_1\leq x_2\leq...\leq x_k$. We need a variation $V(h)$ for $h\in L^1_m$, the set of all equivalence classes of real-valued, $m$-integrable functions on $I$.\\
	Let $BV_m =\{h\in L^1_m: V(h)<+\infty\}$, where $$V(h)=\inf\{ V_I ^*: \text{$h^*$ belongs to the equivalence class of $h$} \}.$$ We define for $h\in BV_m$,
	\begin{equation*}
		|h|_v= \int |h| dm + V(h).
	\end{equation*}
	\end{definition}
\begin{proposition} For every $h\in BV_m$ we have,
	\begin{equation}
		V_I \func{P}_{f^n} h \leq A_n V_I h +  B_n |h|_1,
	\end{equation}
	where $A_n=|g_n|_\infty+\max_{K\in \mathcal{Q}} V_K g_n < 1$, for $n$ sufficiently large, and $\displaystyle B_n=\frac{\max_{K\in \mathcal{Q}} V_K g_n}{m(K)}$ where
	
	\begin{equation*}
		g_n = 
		\begin{cases}
			\dfrac{1}{|(f^n)'|} , & \text{on } \displaystyle{\bigcup_{J\in \mathcal{P}^{(n)}} J} \\
			0, & \text{elsewhere}
		\end{cases}.
	\end{equation*}
\end{proposition}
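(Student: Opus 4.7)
The plan is to prove the inequality via the classical branch-by-branch analysis of Lasota--Yorke, adapted to the countably infinite setting. Writing $\phi_J = (f^n|_J)^{-1}$ for each branch $J=(a_J,b_J) \in \mathcal{P}^{(n)}$, the transfer operator decomposes as
\[
P_{f^n}h(x) \;=\; \sum_{J \in \mathcal{P}^{(n)}} h(\phi_J(x))\, g_n(\phi_J(x))\, \chi_{f^n(J)}(x).
\]
The first step is to bound $V_I$ of each summand, viewed as a function on $I$ that vanishes off $f^n(J)$. A change of variable gives the interior contribution $V_{f^n(J)}(h\circ \phi_J \cdot g_n \circ \phi_J) = V_J(h\, g_n)$, while extension by zero adds two boundary jumps equal to $|h(a_J^+)g_n(a_J^+)|$ and $|h(b_J^-)g_n(b_J^-)|$.

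Next I would apply the product rule $V_J(hg_n) \leq |g_n|_\infty V_J(h) + V_J(g_n)\sup_J |h|$, together with the standard BV estimate $\sup_J |h| \leq V_J(h) + \frac{1}{m(J)}\int_J |h|\,dm$. Summing over $J \in \mathcal{P}^{(n)}$ and using the super-additivity $\sum_J V_J(h) \leq V_I(h)$, the variation contributions combine into $(|g_n|_\infty + \max_{K \in \mathcal{Q}} V_K g_n)V_I(h)$, while the averaged $L^1$ terms combine into $\frac{\max_{K \in \mathcal{Q}} V_K g_n}{m(K)}|h|_1$, matching the asserted inequality. The boundary jumps are handled analogously, bounding each $|h(a_J^+)|, |h(b_J^-)|$ by $V_J(h) + m(J)^{-1}\int_J |h|\,dm$ and then multiplying by $|g_n|_\infty$ so that they get absorbed into the two principal terms.

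The main obstacle is the bookkeeping forced by the countably infinite partition: the BV bound $\sup_J |h| \leq V_J(h) + m(J)^{-1}\int_J|h|\,dm$ is useless on arbitrarily small atoms of $\mathcal{P}^{(n)}$, so the $L^1$ cost cannot be taken against $\min_J m(J)$. The role of the collection $\mathcal{Q}$ is precisely to group small branches into a coarser family whose atoms have uniformly bounded-below measure, while condition (2) of Definition \ref{def1}, $\sum_i 1/f'_i(a_i) < \infty$, guarantees that the resulting infinite sum of boundary contributions remains finite. Finally, the claim $A_n = |g_n|_\infty + \max_{K \in \mathcal{Q}} V_K g_n < 1$ for sufficiently large $n$ follows from Lemma \ref{lemma1}, which provides an iterate with $|g_n|_\infty \leq \beta^n \to 0$, together with an analogous iteration estimate driving $\max_K V_K g_n$ below $1 - |g_n|_\infty$.
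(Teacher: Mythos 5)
The paper does not actually prove this proposition; it defers entirely to \cite{RG2}, so there is no in-paper argument to compare yours against. Judged on its own terms, your proposal reproduces the generic Lasota--Yorke scheme for piecewise expanding maps, and its skeleton (branch decomposition, change of variables $V_{f^n(J)}(h\circ\phi_J\cdot g_n\circ\phi_J)=V_J(hg_n)$, product rule, super-additivity of variation) is sound. But it has two genuine gaps. First, the bookkeeping does not produce the stated constant $A_n=|g_n|_\infty+\max_{K\in\mathcal Q}V_Kg_n$. Each branch term, extended by zero, contributes boundary jumps of size $|h g_n|$ at the endpoints of $f^n(J)$; bounding these by $|g_n|_\infty\sup_J|h|\le |g_n|_\infty\bigl(V_Jh+m(J)^{-1}\int_J|h|\,dm\bigr)$ and summing adds at least one further multiple of $|g_n|_\infty V_Ih$, so the naive accounting yields a coefficient of the form $2|g_n|_\infty$ or $3|g_n|_\infty$ in front of $V_Ih$, not $|g_n|_\infty$. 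Getting the sharp constant requires the structure that defines the class $\mathcal T$ and that your argument never uses: every branch of $f^n$ is increasing and convex with $f^{(n)}_i(a_i^{(n)+})=0$, so all branch images share the left endpoint $0$ (only one boundary jump per branch survives), and $g_n=1/(f^n)'$ is positive and \emph{decreasing} on each atom of $\mathcal P^{(n)}$, which is what lets the right-endpoint jump $|h(b_J^-)|g_n(b_J^-)$ be absorbed without doubling the $|g_n|_\infty$ term.

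Second, and more seriously, the claim $A_n<1$ for $n$ large --- which is the entire point of the proposition, since the corollaries that follow all hinge on it --- is asserted rather than proved. Lemma \ref{lemma1} does not give $|g_n|_\infty\le\beta^n$ for all $n$: it only says that \emph{some} iterate $f^{n_0}$ lies in $\mathcal T_E$, and for $f\in\mathcal T$ one may well have $|g_1|_\infty>1$ (see Example \ref{slopes2}, where some slopes are less than one). One can recover geometric decay of $|g_{kn_0}|_\infty$ along multiples of $n_0$ by the chain rule, but you give no argument at all for why $\max_{K\in\mathcal Q}V_Kg_n$ eventually falls below $1-|g_n|_\infty$; this is exactly where conditions (2) and (3) of Definition \ref{def1} and the monotonicity of $g_n$ on branches (giving $V_Jg_n\le g_n(a_J^+)=1/(f^{(n)}_J)'(a_J)$) must enter, together with a concrete choice of the auxiliary family $\mathcal Q$, which is undefined in this paper and which you are only guessing at. As written, the proposal proves a Lasota--Yorke inequality with \emph{some} constants, but not the inequality with the stated $A_n$ and $B_n$, and not the crucial bound $A_n<1$.
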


\begin{proof}
	See \cite{RG2}.
\end{proof}

\begin{corollary}
If $f \in \mathcal{T}$ then for some $n>1$ and $h\in BV_m$, we have
\begin{equation*}
	|\func{P}_{f^n} h|_v \le r  |h|_v + C  |h|_1,
\end{equation*}
where $r\in(0,1)$ and $C>0$.
\end{corollary}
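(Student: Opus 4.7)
The plan is to deduce the Lasota--Yorke inequality for the norm $|\cdot|_v$ by combining the variation estimate furnished by the preceding proposition with the elementary fact that the Perron--Frobenius operator is an $L^1_m$-contraction. Concretely, since $|h|_v = \int |h|\,dm + V(h)$, I would split the bound on $|\func{P}_{f^n}h|_v$ into its $L^1$ part and its variation part and handle them separately.

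First I would invoke that for any non-singular map $f$ (and hence for $f^n$), the transfer operator $\func{P}_{f^n}$ satisfies $\int |\func{P}_{f^n}h|\,dm \le \int |h|\,dm = |h|_1$. This takes care of the $L^1$ contribution: it contributes $|h|_1$ on the right-hand side. Next I would apply the previous proposition, which says
\begin{equation*}
V(\func{P}_{f^n}h) \;\le\; A_n\, V(h) + B_n\, |h|_1,
\end{equation*}
where $A_n < 1$ once $n$ is sufficiently large (this is precisely what the proposition asserts). Adding these two estimates gives
\begin{equation*}
|\func{P}_{f^n}h|_v \;\le\; A_n\, V(h) + (1 + B_n)\, |h|_1.
\end{equation*}

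Finally I would rewrite the right-hand side in terms of $|h|_v = V(h) + |h|_1$. Setting $r := A_n \in (0,1)$, one has $A_n V(h) = r\bigl(V(h)+|h|_1\bigr) - r|h|_1 = r\,|h|_v - r|h|_1$, so
\begin{equation*}
|\func{P}_{f^n}h|_v \;\le\; r\, |h|_v + (1 + B_n - r)\, |h|_1,
\end{equation*}
which is the desired inequality with $C := 1 + B_n - r > 0$.

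There is no real obstacle here: the only mildly non-trivial ingredients are the $L^1$-contractivity of $\func{P}_{f^n}$ (standard and valid for any non-singular $f$, which is the case since $f \in \mathcal{T}$ implies some iterate lies in $\mathcal{T}_E$ by Lemma \ref{lemma1}, and in particular $f$ is non-singular with respect to $m$) and the choice of $n$ large enough to force $A_n < 1$, both of which are already supplied by the preceding results. The step to watch out for is merely the algebraic regrouping so that the coefficient of $V(h)$ becomes strictly less than $1$ and the residual $|h|_1$-coefficient is absorbed into $C$.
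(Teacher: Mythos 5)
Your proof is correct and is essentially the argument the paper intends: the corollary is stated without proof as an immediate consequence of the preceding proposition, obtained exactly as you do by adding the $L^1$-contractivity of $\func{P}_{f^n}$ to the variation estimate $V(\func{P}_{f^n}h)\le A_nV(h)+B_n|h|_1$ and regrouping (your final regrouping could even be skipped, since $A_nV(h)\le A_n|h|_v$ directly gives $r=A_n$ and $C=1+B_n$). No gaps.
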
 Iterating the above inequality we arrive at the following result.
\begin{corollary}\label{oirtyuv1}
	If $f \in \mathcal{T}$ there exist constants $0\leq r_2<1$, $R_2\geq0$ and $C_2\geq0$ such that for all $n>1$ and all $h\in BV_m$, we have
	\begin{equation*}
		|\func{P}_{f}^n h|_v \le R_{2}r _{2}^{n}|h|_{v}+C_{2}|h|_{1}.
	\end{equation*}
\end{corollary}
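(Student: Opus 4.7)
The plan is to iterate the bound $|\func{P}_{f^{n_0}} h|_v \le r|h|_v + C|h|_1$ furnished by the previous corollary for some fixed integer $n_0>1$, and then to interpolate between multiples of $n_0$ by means of a crude bound on the small iterates $\func{P}_f^s$, $0\le s<n_0$.

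First, I would iterate along the subsequence $n_0 q$. Applying $|\func{P}_{f^{n_0}}(\,\cdot\,)|_v \le r|\cdot|_v + C|\cdot|_1$ a total of $q$ times, and using the standard fact that $\func{P}_f$ is an $L^1_m$-contraction (so $|\func{P}_f^j h|_1 \le |h|_1$ for every $j\ge 0$), a telescoping geometric sum gives
\[
|\func{P}_f^{n_0 q} h|_v \le r^{q}|h|_v + C(1+r+\cdots+r^{q-1})|h|_1 \le r^{q}|h|_v + \tfrac{C}{1-r}|h|_1.
\]

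Next, I would control the finitely many residual iterates $\func{P}_f^s$ for $0\le s<n_0$. The Lasota--Yorke type proposition preceding the first corollary (together with the $L^1$-contraction of $\func{P}_f$) gives, for each such $s$, a bound of the form $|\func{P}_f^s h|_v \le M_s|h|_v + B_s|h|_1$ with finite constants $M_s,B_s$. Taking the maxima $M:=\max_{0\le s<n_0} M_s$ and $B:=\max_{0\le s<n_0} B_s$, we obtain the uniform estimate $|\func{P}_f^s h|_v \le M|h|_v + B|h|_1$ valid for every $0\le s<n_0$.

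Finally, for arbitrary $n>1$, write $n=n_0 q+s$ with $0\le s<n_0$ and combine the two estimates:
\[
|\func{P}_f^{n} h|_v
= |\func{P}_f^{s}\,\func{P}_{f^{n_0}}^{q} h|_v
\le M|\func{P}_{f^{n_0}}^{q} h|_v + B|\func{P}_{f^{n_0}}^{q} h|_1
\le Mr^{q}|h|_v + \Bigl(\tfrac{MC}{1-r}+B\Bigr)|h|_1.
\]
Setting $r_2:=r^{1/n_0}\in(0,1)$, observe that $r^{q}=r_2^{n_0 q}=r_2^{n-s}\le r_2^{-(n_0-1)}r_2^{n}$. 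Hence, taking $R_2:=Mr_2^{-(n_0-1)}$ and $C_2:=\tfrac{MC}{1-r}+B$, we conclude $|\func{P}_f^n h|_v \le R_2 r_2^{n}|h|_v + C_2|h|_1$.

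There is no serious obstacle in this argument; it is a routine Lasota--Yorke iteration. The only point requiring any care is the interpolation step bridging the subsequence $\{n_0 q\}_q$ and an arbitrary $n$, which is handled by the uniform $BV_m$-boundedness of the finitely many operators $\func{P}_f^s$ with $s<n_0$ coming from the preliminary proposition.
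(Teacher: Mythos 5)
Your proof is correct and is exactly the routine iteration the paper has in mind: the paper gives no details, stating only that the result follows by ``iterating the above inequality.'' Your interpolation over the residual iterates $0\le s<n_0$, using the finiteness of the constants in the preceding Lasota--Yorke proposition together with the $L^1$-contraction of $\func{P}_f$, correctly fills in the one step the paper leaves implicit.
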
A measurable map \( f: I \to I \) is said to be \emph{non-singular} if the pushforward of the Lebesgue measure \( m \) by \( f \), denoted \( f_* m \), satisfies \( f_* m \ll m \).

The next corollary provides a generalization of the preceding result.

\begin{corollary}\label{oirtyuv}
	Suppose that $f:I \longrightarrow I$ is a non-singular map such that $\func{P}_{f}:BV_m \longrightarrow BV_m$ is bounded and $f^{n_0} \in \mathcal{T} \cup \mathcal{T}_E$ for some $n_0 \in \mathbb{N}$. Then, there exist constants $0\leq r_2<1$, $R_2\geq0$ and $C_2\geq0$ such that for all $n>1$ and all $h\in BV_m$, we have
	\begin{equation*}
		|\func{P}_{f}^n h|_v \le R_{2}r _{2}^{n}|h|_{v}+C_{2}|h|_{1}.
	\end{equation*}
\end{corollary}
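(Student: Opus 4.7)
The plan is to reduce the statement to Corollary \ref{oirtyuv1} (and its analogue for $\mathcal{T}_E$, which is the classical Lasota--Yorke inequality obtained by iterating the preceding Proposition) applied to the iterate $f^{n_0}$. Since $f^{n_0}\in\mathcal{T}\cup\mathcal{T}_E$, there exist constants $R'>0$, $C'>0$, and $r'\in(0,1)$ such that, using $\func{P}_{f^{n_0}}^k=\func{P}_f^{kn_0}$,
\begin{equation*}
|\func{P}_f^{kn_0} h|_v \le R'(r')^k\,|h|_v + C'\,|h|_1\quad\text{for every } k\ge 1,\ h\in BV_m.
\end{equation*}

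Given an arbitrary $n\ge 1$, I would write $n=kn_0+j$ with $k\ge 0$ and $0\le j<n_0$. When $k\ge 1$, applying $\func{P}_f^{j}$ to $\func{P}_f^{kn_0} h$ and using the hypothesis that $\func{P}_f:BV_m\to BV_m$ is bounded yields
\begin{equation*}
|\func{P}_f^{n} h|_v \le M\bigl(R'(r')^k|h|_v+C'|h|_1\bigr),\qquad M:=\max_{0\le j<n_0}\|\func{P}_f^{j}\|_{BV_m}.
\end{equation*}
Setting $r_2:=(r')^{1/n_0}\in(0,1)$, the relation $kn_0=n-j$ with $0\le j\le n_0-1$ gives
$(r')^k=r_2^{\,n-j}\le r_2^{-(n_0-1)}r_2^{\,n}$, so
\begin{equation*}
|\func{P}_f^{n} h|_v \le M R' r_2^{-(n_0-1)}\,r_2^{\,n}\,|h|_v + M C'\,|h|_1.
\end{equation*}

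For the remaining range $1\le n<n_0$, boundedness of $\func{P}_f$ on $BV_m$ directly gives $|\func{P}_f^{n} h|_v\le M|h|_v$, and since $r_2^{\,n}\ge r_2^{\,n_0-1}$ on this range, this estimate can be absorbed into a bound of the form $R_2 r_2^{\,n}|h|_v$ by enlarging $R_2$. Taking $R_2:=\max\{MR' r_2^{-(n_0-1)},\,M r_2^{-(n_0-1)}\}$ and $C_2:=MC'$ yields the desired inequality uniformly in $n\ge 1$.

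The main obstacle, and really the only nontrivial point, is ensuring a Lasota--Yorke inequality of the required form for maps in $\mathcal{T}_E$ (the statement of Corollary \ref{oirtyuv1} is given explicitly only for $\mathcal{T}$). This is handled by observing that the Proposition preceding Corollary \ref{oirtyuv1}, which provides the one-step bound $V_I\func{P}_{f^n}h\le A_n V_I h+B_n|h|_1$ with $A_n<1$ for $n$ large, applies equally to $f^{n_0}\in\mathcal{T}_E$; iterating it in the standard way yields the analogue of Corollary \ref{oirtyuv1} used above. Everything else is bookkeeping with exponents.
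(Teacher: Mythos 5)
Your argument is correct and is essentially the proof the paper intends (the paper states this corollary without proof, presenting it as a routine consequence of Corollary \ref{oirtyuv1} and the boundedness hypothesis): you apply the Lasota--Yorke estimate to the iterate $f^{n_0}\in\mathcal{T}\cup\mathcal{T}_E$, use boundedness of $\func{P}_f$ on $BV_m$ to absorb the remainder $j$ in $n=kn_0+j$, and adjust constants. The appeal to the $\mathcal{T}_E$ version of the inequality is consistent with the paper's own Remark \ref{remark1} and the cited results of \cite{RG2} and \cite{RLK}, so no gap remains.
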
 

\begin{remark}\label{remark1}
	We note that in \cite{RG2}, the following Theorem \ref{teo1} was proven in a manner that applies to both classes of functions, $\mathcal{T}$ (Definition \ref{def1}) and $\mathcal{T}_E$ (Definition \ref{def2}). Moreover, this result remains valid even for systems that do not initially belong to $\mathcal{T}$ or $\mathcal{T}_E$ in the first iterate, but become members of one of these classes after some iteration (see Corollary \ref{coro1}). In other words, the following theorem holds for all systems such that $f^n \in  \mathcal{T} \cup \mathcal{T}_E$ for some $n$. See Examples \ref{gauss} and \ref{luroth}.
\end{remark}
\begin{theorem}\label{teo1}
For a piecewise convex map $f \in \mathcal{T}$ with countable number of branches, its Frobenius-Perron operator $\func{P}_f$ is quasi-compact on the space $BV_m$. More precisely, we have\\
(1) $\func{P}_f: L_m^1 \rightarrow L_m^1$ has $1$ as the only eigenvalue of modulus $1$.\\
(2) Set $E_1=\{h\in L_m^1 \mid \func{P}_f h=h\} \subseteq BV_m$ and $E_1$ is one-dimensional.\\
(3) $\displaystyle \func{P}_f= \Psi +Q$, where $\Psi$ represents the projection on eigenspace  $E_1$, $|\Psi|_1\leq 1$ and $Q$ is a linear operator on $L_m^1$ with $Q(BV_m) \subseteq BV_m$, $\displaystyle \sup_{n\in \mathbf{N}} |Q^n|_1 < \infty$ and $Q \cdot \Psi = \Psi \cdot Q  = 0$.\\
(4) $Q(BV_m)\subset BV_m$ and, considered as a linear operator on $(BV_m,|\cdot|_v)$, $Q$ satisfies $|Q^n|_v \leq H \cdot q^n$  $(n\geq 1)$ for some constants $H>0$ and $0<q<1$. 
\end{theorem}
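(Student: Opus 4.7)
The approach is a classical Lasota--Yorke / Ionescu-Tulcea--Marinescu argument, combined with the ergodic information already supplied by Lemma \ref{existencef}. The Lasota--Yorke inequality of Corollary \ref{oirtyuv1} is exactly the analytic input needed: for all $n \ge 1$ and $h \in BV_m$,
\begin{equation*}
|\func{P}_f^n h|_v \le R_2 r_2^n |h|_v + C_2 |h|_1, \qquad 0 \le r_2 < 1.
\end{equation*}
Together with the well-known compactness of the inclusion $(BV_m,|\cdot|_v) \hookrightarrow (L^1_m,|\cdot|_1)$ (Helly's selection theorem) and the $L^1$-contractivity $|\func{P}_f|_1 \le 1$, this is precisely the set of hypotheses for Hennion's theorem. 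Applying it yields the quasi-compactness of $\func{P}_f$ on $BV_m$, with essential spectral radius bounded by $r_2$. Consequently, the spectrum of $\func{P}_f$ on $BV_m$ outside the disk of radius $r_2$ consists of finitely many eigenvalues of finite algebraic multiplicity, each with generalized eigenspace contained in $BV_m$.

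Next I would analyze the peripheral spectrum. Since $\func{P}_f$ is a Markov operator on $L^1_m$ (positive, $\int \func{P}_f h\, dm = \int h\, dm$), its spectral radius equals $1$; and by Lemma \ref{existencef} the density $h_1 = dm_1/dm \in BV_m$ satisfies $\func{P}_f h_1 = h_1$, so $1$ is an eigenvalue. To show it is the \emph{only} peripheral eigenvalue and is algebraically simple, I would invoke the exactness of $([0,1],\mathcal{B},m_1;f)$ from Lemma \ref{existencef}. Exactness is equivalent to $\func{P}_f^n (g/h_1)\, h_1 \to h_1 \int g\, dm$ in $L^1_m$ for all $g \in L^1_m$; this convergence is incompatible with the existence of any eigenfunction $\func{P}_f h = \lambda h$ with $|\lambda|=1$, $\lambda \neq 1$ (such an $h$ would have $|\func{P}_f^n h|_1 = |h|_1$ but no $L^1$-limit), and equally incompatible with a non-trivial Jordan block at $1$. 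Thus $E_1 = \mathbb{R}\cdot h_1$ and $1$ is an isolated simple eigenvalue of $\func{P}_f$ on $BV_m$.

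Now define the spectral projection
\begin{equation*}
\Psi h := h_1 \int h\, dm, \qquad Q := \func{P}_f - \Psi.
\end{equation*}
Since $\func{P}_f h_1 = h_1$ and $\int \func{P}_f h\, dm = \int h\, dm$ the identities $\Psi^2 = \Psi$ and $\Psi \func{P}_f = \func{P}_f \Psi = \Psi$ hold, giving $Q\Psi = \Psi Q = 0$. The estimate $|\Psi h|_1 = |\!\int h\, dm|\, |h_1|_1 \le |h|_1$ (using $|h_1|_1 = 1$) gives $|\Psi|_1 \le 1$, and boundedness of $\Psi$ on $BV_m$ follows from $h_1 \in BV_m$. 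Since the spectrum of $Q$ on $BV_m$ is that of $\func{P}_f$ with the eigenvalue $1$ removed, its spectral radius is some $q < 1$, whence $|Q^n|_v \le H q^n$ by the spectral radius formula (or Gelfand's formula applied in $(BV_m,|\cdot|_v)$). On $L^1_m$, the same decomposition gives $|Q^n|_1 \le |\func{P}_f^n|_1 + |\Psi|_1 \le 2$, so $\sup_n |Q^n|_1 < \infty$.

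The main obstacle is the peripheral spectrum analysis: ensuring that neither other unimodular eigenvalues nor Jordan blocks at $1$ appear. The cleanest route is the exactness argument sketched above, but one must be careful that an eigenfunction on the unit circle automatically lies in $BV_m$ (which follows from the spectral decomposition provided by quasi-compactness) and that exactness of $m_1$ implies the trivial peripheral structure for $\func{P}_f$; this is a standard consequence, but it is where the specific hypotheses on $f \in \mathcal{T}$ do the real work.
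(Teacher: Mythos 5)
Your proposal is correct and follows the standard Ionescu-Tulcea--Marinescu/Hennion route (Lasota--Yorke inequality plus compact embedding of $BV_m$ into $L^1_m$ for quasi-compactness, then exactness of $m_1$ from Lemma \ref{existencef} to trivialize the peripheral spectrum, then the explicit decomposition $\Psi h = h_1\int h\,dm$, $Q=\func{P}_f-\Psi$). The paper itself gives no proof and simply defers to \cite{RG2}, which argues along exactly these lines; the surrounding material (Corollary \ref{oirtyuv1} and Lemma \ref{existencef}) is precisely the input your argument uses.
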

\begin{proof}
	See \cite{RG2}.
\end{proof}

\begin{corollary}\label{coro1}
Suppose that \( f: I \to I \) is a non-singular map such that the associated Frobenius–Perron operator $\mathcal{P}_f: BV_m \to BV_m$ is bounded and admits a fixed point \( m_1 \), which is a mixing probability measure with density \( h_1 \in BV_m \). If there exists \( n_0 \in \mathbb{N} \) such that \( f^{n_0} \in \mathcal{T} \cup \mathcal{T}_E \), then the operator \( \mathcal{P}_f \) is quasi-compact on the space \( BV_m \).

 More precisely, we have\\
	(1) $\func{P}_f: L_m^1 \rightarrow L_m^1$ has $1$ as the only eigenvalue of modulus $1$.\\
	(2) Set $E_1=\{h\in L_m^1 : \func{P}_f h=h\} \subseteq BV_m$ and $E_1$ is one dimensional.\\
	(3) $\displaystyle \func{P}_f= \Psi +Q$, where $\Psi$ represents the projection on eigenspace  $E_1$, $|\Psi|_1\leq 1$ and $Q$ is a linear operator on $L_m^1$ with $Q(BV_m) \subseteq BV_m$, $\displaystyle \sup_{n\in \mathbb{N}} |Q^n|_1 < \infty$ and $Q \cdot \Psi = \Psi \cdot Q  = 0$.\\
	(4) $Q(BV_m)\subset BV_m$ and, considered as a linear operator on $(BV_m,|\cdot|_v)$, $Q$ satisfies $|Q^n|_v \leq H \cdot q^n$  $(n\geq 1)$ for some constants $H>0$ and $0<q<1$. 
\end{corollary}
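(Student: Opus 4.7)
The plan is to bootstrap the quasi-compactness already established for the class $\mathcal{T}\cup\mathcal{T}_E$ in Theorem~\ref{teo1} to the map $f$ itself, by exploiting the fact that $\mathcal{P}_f^{n_0}=\mathcal{P}_{f^{n_0}}$ and that the uniform Lasota--Yorke bound of Corollary~\ref{oirtyuv} holds for $\mathcal{P}_f$. Since $f^{n_0}\in\mathcal{T}\cup\mathcal{T}_E$, Theorem~\ref{teo1} (extended to $\mathcal{T}_E$, see Remark~\ref{remark1}) gives a decomposition
\[
\mathcal{P}_f^{n_0} \;=\; \Psi_0 + Q_0,\qquad \Psi_0 Q_0 = Q_0\Psi_0 = 0,\qquad \mathrm{range}(\Psi_0)=\mathbb{C}h_1,
\]
with $|\Psi_0|_1\leq 1$, $\sup_n|Q_0^n|_1<\infty$, and $|Q_0^n|_v\leq H q^n$ for some $0<q<1$.

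First, I would establish the quasi-compactness of $\mathcal{P}_f$ on $(BV_m,|\cdot|_v)$ directly. Corollary~\ref{oirtyuv} provides a uniform Lasota--Yorke inequality $|\mathcal{P}_f^n h|_v\leq R_2 r_2^n |h|_v + C_2|h|_1$. Since the embedding $BV_m\hookrightarrow L^1_m$ is compact (Helly's selection theorem) and $\mathcal{P}_f:BV_m\to BV_m$ is bounded by hypothesis, Hennion's theorem gives quasi-compactness of $\mathcal{P}_f$ on $BV_m$ with essential spectral radius at most $r_2<1$. In particular, the peripheral spectrum consists of finitely many eigenvalues of finite multiplicity.

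Next I would pin down the peripheral spectrum. Any eigenvalue $\lambda$ of $\mathcal{P}_f$ with $|\lambda|=1$ yields an eigenvalue $\lambda^{n_0}$ of $\mathcal{P}_f^{n_0}$ of modulus one; by Theorem~\ref{teo1}(1) this forces $\lambda^{n_0}=1$, and the associated eigenvector must lie in the one-dimensional space $\mathrm{range}(\Psi_0)=\mathbb{C}h_1$. Since $\mathcal{P}_f h_1 = h_1$ (as $m_1$ is $f$-invariant), this pins $\lambda=1$ and the eigenspace to $\mathbb{C}h_1$. Simplicity of the eigenvalue then follows from the fact that the mixing probability $m_1$ is the unique fixed point in a sufficiently large subclass of $BV_m$ via the standard ergodic decomposition argument; alternatively, use that $\Psi_0$ is already a rank-one projection.

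Finally, I would construct the decomposition $\mathcal{P}_f=\Psi+Q$ as follows. Because $\mathcal{P}_f$ commutes with $\mathcal{P}_f^{n_0}$, functional calculus (or a direct check) gives $\mathcal{P}_f\Psi_0=\Psi_0\mathcal{P}_f=\Psi_0$; set $\Psi:=\Psi_0$ and $Q:=\mathcal{P}_f(I-\Psi_0)$. A routine induction yields $Q^n=\mathcal{P}_f^n(I-\Psi_0)$, and in particular $Q^{n_0}=\mathcal{P}_f^{n_0}(I-\Psi_0)=Q_0$. Combined with $|\mathcal{P}_f^n|_1\leq 1$ and $|I-\Psi_0|_1\leq 2$, one gets $\sup_n|Q^n|_1<\infty$, and $|Q^{n_0 k}|_v=|Q_0^k|_v\leq Hq^k$ extends via Corollary~\ref{oirtyuv} to exponential decay $|Q^n|_v\leq H' q'^n$ at every iterate. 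Writing the formula $\Psi h=(\int h\,dm)h_1$, which follows from uniqueness of the normalized invariant density and the rank-one structure, yields $|\Psi|_1\leq 1$, completing all four assertions of the corollary.

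The main obstacle is the spectral identification between $\mathcal{P}_f$ and $\mathcal{P}_f^{n_0}$: on its face, the presence of nontrivial $n_0$-th roots of unity as potential peripheral eigenvalues of $\mathcal{P}_f$ could obstruct simplicity, and this is where the mixing hypothesis on $m_1$ together with the one-dimensionality of the fixed-point subspace of $\mathcal{P}_f^{n_0}$ is essential. Once that point is established, transferring the quasi-compact decomposition from $\mathcal{P}_f^{n_0}$ to $\mathcal{P}_f$ is a formal consequence of the commutation relation.
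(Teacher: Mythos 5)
The paper never actually proves Corollary~\ref{coro1}: it is asserted as an extension of Theorem~\ref{teo1} via Remark~\ref{remark1}, with no argument given. Your proposal therefore fills a genuine gap, and it is correct. All the essential points are in place: the semigroup identity $\func{P}_f^{n_0}=\func{P}_{f^{n_0}}$; the identification $E_1(\func{P}_{f^{n_0}})=\mathbb{C}h_1$ (since $h_1$ is $\func{P}_f$-fixed, hence $\func{P}_{f^{n_0}}$-fixed, and that eigenspace is one-dimensional by Theorem~\ref{teo1}(2)); the commutation $\func{P}_f\Psi_0=\Psi_0\func{P}_f=\Psi_0$, which follows cleanly from the explicit formula $\Psi_0 h=h_1\int h\,dm$ (derived in the proof of Proposition~\ref{jkkgnhn}) together with $\int\func{P}_f h\,dm=\int h\,dm$; and the resulting identity $Q^n=\func{P}_f^n-\Psi_0$, which gives $Q^{n_0k+r}=Q^rQ_0^{k}$ and reduces the $|\cdot|_v$-decay of $Q^n$ to that of $Q_0^{k}$ plus the uniform $BV$-boundedness of $\func{P}_f^r$ for $r<n_0$, which Corollary~\ref{oirtyuv} indeed supplies. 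Two minor observations: the appeal to Hennion's theorem in your second paragraph is redundant, since the explicit decomposition you construct afterwards already exhibits quasi-compactness directly; and the mixing hypothesis is not actually needed to exclude non-trivial $n_0$-th roots of unity from the peripheral spectrum --- the one-dimensionality of $E_1(\func{P}_{f^{n_0}})$ together with $\func{P}_f h_1=h_1$ suffices, exactly as in the ``alternatively'' clause of your third paragraph.
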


\begin{proposition}\label{jkkgnhn}
Suppose that \( f: I \to I \) is a non-singular map such that the associated Frobenius–Perron operator $\mathcal{P}_f: BV_m \to BV_m$ is bounded and admits a fixed point \( m_1 \), which is a mixing probability measure with density \( h_1 \in BV_m \). If there exists \( n_0 \in \mathbb{N} \) such that \( f^{n_0} \in \mathcal{T} \cup \mathcal{T}_E \), then the Frobenius-Perron operator $\func{P}_f$ satisfies 
\begin{equation}\label{oriutrtrt}
	|\func{P}^n_f h|_v \leq H_2q^n |h|_v
\end{equation}for all $h \in BV_m$ such that $\int{h}dm=0$ and all $n \geq 1$.
\end{proposition}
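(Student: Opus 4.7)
The plan is to deduce this directly from the spectral decomposition provided by Corollary~\ref{coro1}, by showing that the zero-integral condition forces the projection $\Psi h$ to vanish, so that $\mathcal{P}_f^n h$ coincides with $Q^n h$ and the desired geometric decay follows immediately.

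First, I would invoke Corollary~\ref{coro1} to write $\mathcal{P}_f = \Psi + Q$, where $\Psi$ is the projection onto the one-dimensional eigenspace $E_1 = \operatorname{span}(h_1)$, with $\Psi \cdot Q = Q \cdot \Psi = 0$, and $Q$ acts on $(BV_m, |\cdot|_v)$ with $|Q^n|_v \leq H q^n$ for some $H>0$ and $0<q<1$. Consequently, for every $n \geq 1$,
\begin{equation*}
\mathcal{P}_f^n h = \Psi h + Q^n h.
\end{equation*}

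Next, I would show that $\int h\,dm = 0$ implies $\Psi h = 0$. Since $\Psi h \in E_1$, there exists a constant $c$ such that $\Psi h = c\, h_1$. Because $\mathcal{P}_f$ preserves the integral against $m$ (Frobenius--Perron operators do, as $\int \mathcal{P}_f g\, dm = \int g\, dm$), we have $\int \mathcal{P}_f^n h\, dm = \int h\, dm = 0$ for all $n$. On the other hand, $Q^n h \to 0$ in $L^1_m$ (by $|Q^n|_1 \leq$ const and $|Q^n|_v \leq H q^n$, together with $Q \cdot \Psi = 0$ giving $\mathcal{P}_f^n h - \Psi h = Q^n h$), so $\mathcal{P}_f^n h \to \Psi h$ in $L^1_m$ as $n \to \infty$. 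Passing to the limit yields $\int \Psi h\, dm = 0$, i.e., $c \int h_1\, dm = c = 0$ since $h_1$ is a probability density. Hence $\Psi h = 0$.

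With $\Psi h = 0$ established, the decomposition gives $\mathcal{P}_f^n h = Q^n h$, and the bound on $Q$ from Corollary~\ref{coro1} yields
\begin{equation*}
|\mathcal{P}_f^n h|_v = |Q^n h|_v \leq H q^n |h|_v,
\end{equation*}
which is exactly the claimed inequality with $H_2 = H$. The main (essentially only) subtlety is the verification that $\Psi h = 0$; this requires the $L^1$-convergence of $\mathcal{P}_f^n h$ to $\Psi h$, which follows from $|Q|_1 < \infty$ and the contraction of $Q$ in the variation norm, together with the elementary fact that $BV_m$ embeds continuously into $L^1_m$. Once this is in place, the remainder is a one-line application of the spectral bound.
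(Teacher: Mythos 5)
Your proposal is correct and follows essentially the same route as the paper: both arguments use the decomposition $\func{P}_f^n h = \Psi h + Q^n h$ from Corollary~\ref{coro1}, identify $\Psi h = \left(\int h\,dm\right) h_1$ by integrating this relation, using that $\func{P}_f$ preserves the integral and that $\int Q^n h\,dm \to 0$, and then conclude from the bound $|Q^n|_v \leq Hq^n$. Your version is in fact slightly more careful than the paper's in justifying the passage to the limit via the $L^1$ decay of $Q^n h$.
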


\begin{proof}First, let us observe that for all $h \in BV_m$, it holds $\Psi(h)=h_1 \cdot  \int{h}dm$, where $h_1$ is a non-negative function $h_1\in BV_m$, s.t. $|h_1|_1=1$, which is a fixed point for the Frobenius-Perron operator $\func{P}_f$. Indeed, $\Psi$ is the projection on $E_1 = [h_1]$ (the space spanned by $h_1 \in BV_m$ such that $\func{P}_f(h_1)=h_1$). Thus $\Psi (h)= \lambda \cdot h_1$ for some $\lambda$ which depends on $h$. Then, we need to find an expression for $\lambda$. Integrating the relation $\displaystyle \func{P}_f^n (h)= \Psi (h) +Q^n(h)$ (which holds by (3), $\Psi \cdot Q = Q \cdot  \Psi = 0$) we get $$\displaystyle \func{P}_f^n (h)= \Psi (h) +Q^n(h).$$Hence, for all $n \geq 1$, it holds

\begin{eqnarray*}
	\lambda + \int{Q^n(h)} dm &=&  \int {\Psi(h) +Q^n(h)} dm \\&=&  \int {h} dm.
\end{eqnarray*} By (4), we get

\begin{eqnarray*}
\left| \int {Q^n(h)} dm \right| &\leq& H \cdot q^n, \ \text{for all } n\geq 1.
\end{eqnarray*}Taking the limit on both sides of the above relations, we get $\lambda = \int {h} dm$. Therefore, $\Psi (h) = h_1 \cdot \int h dm,$ for all $h \in BV_m$. To finish the proof, note that, it holds $\Psi(h)=0$ for every $h \in BV_m$ such that $\int{h}dm=0$, which proves Equation (\ref{oriutrtrt}).

\end{proof}

\begin{remark}\label{gfhjhfdf}
	Although the main motivation of this work lies in the class of piecewise convex maps \( f \in \mathcal{T} \), we emphasize that, with respect to the assumptions on \( f \), all the theorems in this article follow from the consequences of Corollary~\ref{coro1} and Proposition~\ref{jkkgnhn}. Therefore, the results established here apply to any transformation \( f: I \to I \) that is non-singular with respect to \( m \), for which the associated Frobenius-Perron operator \( \mathcal{P}_f: BV_m \to BV_m \) is bounded, admits a fixed point \( m_1 \), which is a mixing probability measure with density \( h_1 \in BV_m \) and there exists \( n_0 \in \mathbb{N} \) such that \( f^{n_0} \in \mathcal{T} \cup \mathcal{T}_E \).
\end{remark}

\section{The $\mathcal{L}^1$ and $S^1$ spaces and actions of $\func{F}_*$}\label{utrietyyighgdf}

In this section, we will construct the vector spaces that will be analyzed throughout the text; specifically, those on which we need to understand the action of the transfer operator associated with $F$. Since the foundation of this construction relies on the well-known Rohklin Disintegration Theorem, we begin by stating this result. From it, we will derive the key concepts underlying the vector spaces we will explore.


\subsubsection*{Rokhlin's Disintegration Theorem}

Consider a probability space $(\Sigma,\mathcal{B}, \mu)$ and a partition $%
\Gamma$ of $\Sigma$ into measurable sets $\gamma \in \mathcal{B}$. Denote by $%
\pi : \Sigma \longrightarrow \Gamma$ the projection that associates to each
point $x \in \Sigma$ the element $\gamma _x$ of $\Gamma$ that contains $x$. That is, 
$\pi(x) = \gamma _x$. Let $\widehat{\mathcal{B}}$ be the $\sigma$-algebra of 
$\Gamma$ provided by $\pi$. Precisely, a subset $\mathcal{Q} \subset \Gamma$
is measurable if, and only if, $\pi^{-1}(\mathcal{Q}) \in \mathcal{B}$. We
define the \textit{quotient} measure $\mu _1$ on $\Gamma$ by $\mu _1(%
\mathcal{Q})= \mu(\pi ^{-1}(\mathcal{Q}))$.

The proof of the following theorem can be found in \cite{Kva}, Theorem
5.1.11 (items a), b) and c)) and Proposition 5.1.7 (item d)).

\begin{theorem}\label{rok}
	(Rokhlin's Disintegration Theorem) Suppose that $\Sigma $ is a complete and
	separable metric space, $\Gamma $ is a measurable partition of $\Sigma $ and $\mu $ is a probability on $\Sigma $. Then, $\mu $ admits a
	disintegration relative to $\Gamma $. That is, there exists a family $\{\mu _{\gamma}\}_{\gamma \in \Gamma }$ of probabilities on $\Sigma $ and a quotient measure $\mu _1:=\pi _*\mu$ (where $\pi$ is the canonical projection), such that:
	
	\begin{enumerate}
		\item[(a)] $\mu _\gamma (\gamma)=1$ for $\mu _1$-a.e. $\gamma \in \Gamma$;
		
		\item[(b)] for all measurable set $E\subset \Sigma $ the function $\Gamma
		\longrightarrow \mathbb{R}$ defined by $\gamma \longmapsto \mu _{\gamma
		}(E), $ is measurable;
		
		\item[(c)] for all measurable set $E\subset \Sigma $, it holds $\mu (E)=\int 
		{\mu _{\gamma }(E)}d\mu _1(\gamma )$.

		\item [(d)] If the $\sigma $-algebra $\mathcal{B}$ on $\Sigma $ has a countable
		generator, then the disintegration is unique in the following sense. If $(\{\mu _{\gamma }^{\prime }\}_{\gamma \in \Gamma },\mu _1)$ is another disintegration of the measure $\mu $ relative to $\Gamma $, then $\mu
		_{\gamma }=\mu _{\gamma }^{\prime }$, for $\mu _1$-almost every $\gamma
		\in \Gamma $.
	\end{enumerate}
\end{theorem}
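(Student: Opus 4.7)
The plan is to construct the disintegration as a regular conditional probability with respect to the sub-$\sigma$-algebra $\mathcal{A}:=\pi^{-1}(\widehat{\mathcal{B}})$, leveraging the Polish structure of $\Sigma$. Since $\Sigma$ is complete and separable metric, its Borel $\sigma$-algebra $\mathcal{B}$ admits a countable generator $\{A_n\}_{n\geq 1}$ closed under finite intersections. For each $n$, choose a Borel-measurable version of the conditional expectation $E[\mathbf{1}_{A_n}\mid \mathcal{A}]$; since $\mathcal{A}$-measurable functions factor through $\pi$, this produces a measurable function $p_n\colon \Gamma \to [0,1]$, defined $\mu_1$-a.e., such that $E[\mathbf{1}_{A_n}\mid \mathcal{A}] = p_n \circ \pi$ and $\int_{\pi^{-1}(Q)} \mathbf{1}_{A_n}\, d\mu = \int_Q p_n\, d\mu_1$ for every measurable $Q\subset \Gamma$.

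Next I would assemble the $\mu_\gamma$. The collection of identities needed to conclude that $n\mapsto p_n(\gamma)$ extends to a probability measure on the algebra $\mathcal{A}_0$ generated by $\{A_n\}$ (finite additivity, monotone continuity at $\emptyset$, $p_{\Sigma}(\gamma)=1$) is countable, and each identity fails only on a $\mu_1$-null set; discarding their countable union, one gets a premeasure for every remaining $\gamma$, which extends uniquely by Carathéodory to a Borel probability $\mu_\gamma$ on $\Sigma$. This simultaneously gives (b) for $E\in \mathcal{A}_0$ by construction, and a monotone-class/Dynkin argument promotes measurability of $\gamma\mapsto \mu_\gamma(E)$ to arbitrary $E\in \mathcal{B}$; the same extension procedure upgrades the defining identity $\mu(A_n)=\int p_n\, d\mu_1$ to (c).

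The step I expect to be most delicate is property (a), concentration on fibers. Here one must exploit that $\Gamma$ is a \emph{measurable} partition in the Rokhlin sense: because $\Sigma$ is Polish, one may find a countable family $\{B_k\}_{k\geq 1}\subset \mathcal{A}$ that separates the atoms of $\Gamma$, so that for each $\gamma\in \Gamma$ one has $\gamma = \bigcap_k B_k^{\varepsilon_k(\gamma)}$ (where $B^1=B$ and $B^0=\Sigma\setminus B$) modulo a $\mu$-null set. For each $k$, since $\mathbf{1}_{B_k}$ is already $\mathcal{A}$-measurable, $\mu_\gamma(B_k)=\mathbf{1}_{B_k}(\pi^{-1}\gamma)\in\{0,1\}$ for $\mu_1$-a.e. $\gamma$; intersecting the chosen versions over $k$ (a countable operation) yields $\mu_\gamma(\gamma)=1$ off a single $\mu_1$-null set.

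Finally, uniqueness (d) follows from the countable generator hypothesis: if $\{\mu'_\gamma\}$ is another disintegration, then both families provide versions of $E[\mathbf{1}_{A_n}\mid\mathcal{A}]$, so $\mu_\gamma(A_n)=\mu'_\gamma(A_n)$ for $\mu_1$-a.e.\ $\gamma$; taking a countable intersection of the good sets over $n$ and applying Carathéodory uniqueness on $\mathcal{A}_0$ gives $\mu_\gamma=\mu'_\gamma$ off a $\mu_1$-null set. The main technical obstacle is step (a): one must handle the (possibly uncountable) family of fibers simultaneously, and this is precisely where the Polish hypothesis together with the measurable partition structure is indispensable — without them, no countable separating family exists and the disintegration can fail to be supported on fibers.
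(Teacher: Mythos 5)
The paper offers no proof of this statement: it is quoted verbatim from the literature, with an explicit pointer to Oliveira--Viana (\cite{Kva}, Theorem 5.1.11 for (a)--(c) and Proposition 5.1.7 for (d)), so there is no in-paper argument to compare yours against. Your outline is the standard construction of a regular conditional probability relative to $\mathcal{A}=\pi^{-1}(\widehat{\mathcal{B}})$, and items (b), (c), (d) and the fiber-concentration argument in (a) are handled correctly (the countable separating family $\{B_k\}\subset\mathcal{A}$ with $\mu_\gamma(B_k)\in\{0,1\}$ a.e.\ is exactly the right use of the measurable-partition hypothesis).

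There is, however, one genuine gap, and it is not where you flagged it. You assert that the identities needed to make $n\mapsto p_n(\gamma)$ a premeasure on the countable algebra $\mathcal{A}_0$ --- ``finite additivity, monotone continuity at $\emptyset$, $p_\Sigma(\gamma)=1$'' --- form a countable collection, each failing only on a $\mu_1$-null set. Finite additivity and normalization are indeed countably many conditions, but monotone continuity at $\emptyset$ is not: the decreasing sequences in a countable algebra with empty intersection form an uncountable family, so you cannot discard one null set per condition and take a countable union. This is precisely the step at which the Polish structure of $\Sigma$ must actually be deployed, and your sketch invokes it only for the countable generator and for step (a). The standard repair is to enlarge $\{A_n\}$ to a countable family that is inner regular with respect to a countable compact class (each $A_n$ approximated from inside by compacts $K_{n,m}$ with $\mu(A_n\setminus K_{n,m})<1/m$); the countably many a.e.\ inequalities $p_n(\gamma)\le p_{K_{n,m}}(\gamma)+E[\mathbf{1}_{A_n\setminus K_{n,m}}\mid\mathcal{A}](\gamma)$ then force $\sigma$-additivity on $\mathcal{A}_0$ for a.e.\ $\gamma$, since a finitely additive set function on an algebra that is inner regular with respect to a compact class is automatically countably additive. (The cited reference avoids the issue differently, building $\mu_\gamma$ as a positive linear functional on a countable dense set of continuous functions and invoking Riesz representation.) With that step supplied, your argument goes through.
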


\subsubsection{The $\mathcal{L}^{1}$ and $S^1$ spaces}\label{jdfjdhkjf}

Let $\mathcal{SB}(\Sigma )$ be the space of Borel signed measures on $\Sigma : = I \times K$. Given $\mu \in \mathcal{SB}(\Sigma )$, denote by $\mu ^{+}$ and $\mu ^{-}$
the positive and the negative parts of its Jordan decomposition, $\mu =\mu
^{+}-\mu ^{-}$ (see remark \ref{ghtyhh}). Let $\pi _{1}:\Sigma \longrightarrow I$ be the projection defined by $\pi_1 (x,y)=x$, denote by $\pi_{1*}: \mathcal{SB}(\Sigma) \rightarrow \mathcal{SB}(I)$ the pushforward map. Let $\mathcal{AB}$ denote the set of signed measures $\mu \in \mathcal{SB}(\Sigma )$ such that its associated positive and negative marginal measures, $\pi _{1*}\mu ^{+}$ and $\pi _{1*}\mu ^{-},$ are absolutely continuous with respect to $m$. That is,
	\begin{equation*}
		\mathcal{AB}=\{\mu \in \mathcal{SB}(\Sigma ):\pi _{1*}\mu ^{+}<<m\ \ 
		\mathnormal{and}\ \ \pi _{1* }\mu ^{-}<<m\}.  \label{thespace1}
	\end{equation*}%
}Given a \emph{probability measure} $\mu \in \mathcal{AB}$ on $\Sigma$,
Theorem \ref{rok} describes a disintegration $\left( \{\mu _{\gamma
}\}_{\gamma },\mu _{1}\right)$ along $\mathcal{F}^{s}$ by a family of probability measures $\{\mu _{\gamma }\}_{\gamma }$, defined on the stable leaves.  Moreover, since 
$\mu \in \mathcal{AB}$, $\mu _1$ can be identified with a non-negative
marginal density $\phi _{1}:I\longrightarrow \mathbb{R}$, defined almost
everywhere, where $|\phi _{1}|_{1}=1$. For a non-normalized
positive measure $\mu \in \mathcal{AB}$ we can define its disintegration following the same idea.  In this case, $\{ \mu _{\gamma } \}$ is still a family of probability measures, $\phi _{1}$ is still defined and $|\phi _{1}|_{1}=\mu (\Sigma)$.

\begin{definition}
	Let $\pi _{2}:\Sigma \longrightarrow K$ be the projection defined by $
	\pi _{2}(x,y)=y$. Consider $\pi
	_{\gamma ,2}:\gamma \longrightarrow K$, the restriction of the map $\pi
	_{2}$ to the vertical leaf $\gamma $, and the
	associated pushforward map $\pi _{\gamma ,2\ast }$. Given a positive measure 
	$\mu \in \mathcal{AB}$ and its disintegration along the stable leaves $%
	\mathcal{F}^{s}$, $\left( \{\mu _{\gamma }\}_{\gamma },\mu _{1}=\phi
	_{1}m\right) $, we define the \textbf{restriction of $\mu $ on $\gamma $}
	and denote it by $\mu |_{\gamma }$ as the positive measure on $K$ (not
	on the leaf $\gamma $) defined, for all measurable set $A\subset K$, as 
	\begin{equation*}
		\mu |_{\gamma }(A)=\pi _{\gamma ,2\ast }(\phi _{1}(\gamma )\mu _{\gamma
		})(A).
	\end{equation*}%
	For a given signed measure $\mu \in \mathcal{AB}$ and its Jordan
	decomposition $\mu =\mu ^{+}-\mu ^{-}$, define the \textbf{restriction of $%
		\mu $ on $\gamma $} by%
	\begin{equation*}
		\mu |_{\gamma }=\mu ^{+}|_{\gamma }-\mu ^{-}|_{\gamma }.
	\end{equation*}%
	\label{restrictionmeasure}
\end{definition}

\begin{remark}
	\label{ghtyhh}As proved in Appendix 2 of \cite {GLu},  restriction $%
	\mu |_{\gamma }$ does not depend on decomposition. Precisely, if $\mu
	=\nu _{1}-\nu _{2}$, where $\nu _{1}$ and $\nu _{2}$ are any positive
	measures, then $\mu |_{\gamma }=\nu _{1}|_{\gamma }-\nu _{2}|_{\gamma }$ $%
	\mu_{1}$-a.e. $\gamma \in I$. 
\end{remark}

Let $(X,d)$ be a compact metric space, $h:X\longrightarrow \mathbb{R}$ be a
Lipschitz function, and $L(h)$ be its best Lipschitz constant. That is,
\begin{equation}\label{lipsc}
	\displaystyle{L(h):=\sup_{x,y\in X,x\neq y}\left\{ \dfrac{|h(x)-h(y)|}{d(x,y)}%
		\right\} }.
\end{equation}

\begin{definition}
	Given two signed measures, $\mu $ and $\nu $ on $X,$ we define the \textbf{\
		Wasserstein-Kantorovich-like} distance between $\mu $ and $\nu $ by 
	\begin{equation*}
		W_{1}(\mu ,\nu ):=\sup_{L(h)\leq 1,|h|_{\infty }\leq 1}\left\vert \int {\
			h}d\mu -\int {h}d\nu \right\vert .
	\end{equation*}%
	\label{wasserstein}
\end{definition}We denote%

\begin{equation}
	||\mu ||_{W}:=W_{1}(0,\mu ),  \label{WW}
\end{equation}and observe that $||\cdot ||_{W}$ defines a norm on the vector space of signed measures defined on a compact metric space. It is worth remarking that this norm is equivalent to the standard norm of the dual space of Lipschitz functions.

\begin{definition}\label{sdfsdfsdasd}
	Let $\mathcal{L}^{1}\subseteq \mathcal{AB}(\Sigma )$ be the set of signed measures defined as%
	\begin{equation*}
		\mathcal{L}^{1}=\left\{ \mu \in \mathcal{AB}: \int W_{1}(\mu
		^{+}|_{\gamma },\mu ^{-}|_{\gamma }) dm <\infty \right\}.
	\end{equation*}%
	
	Define the function $||\cdot ||_{1}:\mathcal{L}^{1}\longrightarrow \mathbb{R}$ by%
	\begin{equation*}
		||\mu ||_{1}:= \int W_{1}(\mu
		^{+}|_{\gamma },\mu ^{-}|_{\gamma }) dm(\gamma) = \int ||\mu|_\gamma ||_W dm(\gamma).
	\end{equation*}Finally (see Definition \ref{bv}), consider the following set of signed measures on $\Sigma$%
	\begin{equation}\label{sinfi}
		S^{1}=\left\{ \mu \in \mathcal{L}^{1}:\phi _{1}\in
		BV_m \right\},
	\end{equation}%
	and the function, $||\cdot ||_{S^{1}}:S^{1}\longrightarrow 
	\mathbb{R}$, defined by (see Definition \ref{bv})%
	\begin{equation*}
		||\mu ||_{S^{1}}=|\phi _{1}|_{v}+||\mu ||_{1}.
	\end{equation*}
\end{definition}The sets $\left( \mathcal{L}^{1},||\cdot ||_{1}\right) $ and $\left(
S^{1},||\cdot||_{S^{1}}\right) $ are normed vector spaces. The proof of these facts are straightforward and a proof of an analogous case can be found in \cite{L}.

\subsection{The Transfer Operator Associated with $F$}
We begin by exploring properties of the action of $\func{F}_*$ on the spaces $S^1$ and $\mathcal{L}^1$, as defined in the previous section.

Consider the pushforward map (also known as the "transfer operator") $\func{F}_{\ast }$ associated with $F$, defined
by 
\begin{equation*}
	\lbrack \func{F}_{\ast }\mu ](E)=\mu (F^{-1}(E)),
\end{equation*}%
for each signed measure $\mu \in \mathcal{SB}(\Sigma )$ and for every measurable set $E \subset \Sigma$, where $\Sigma := I \times K$.

\begin{lemma}
	\label{transformula}	Let $F:\Sigma \longrightarrow \Sigma$ ($F=(f,G)$) be a transformation, where $f \in \mathcal{T}$ and $G$ satisfies (H1). Let $\mu \in \mathcal{AB}$ be a probability measure disintegrated by $(\{\mu _{\gamma }\}_{\gamma },\phi _1)$, the disintegration $(\{(\func{%
		F}_{\ast }\mu )_{\gamma }\}_{\gamma },(\func{F}_{\ast }\mu )_1)$ of the
	pushforward $\func{F}_{\ast }\mu $ \ satisfies the following relations%
	\begin{equation}
		(\func{F}_{\ast }\mu )_1=\func{P}_{f}(\phi _1)m  \label{1}
	\end{equation}
	and
	\begin{equation}
		(\func{F}_{\ast }\mu )_{\gamma }=\nu _{\gamma }:=\frac{1}{\func{P}_{f}(\phi
			_1)(\gamma )}\sum_{i=1}^{+\infty}{\frac{\phi _1}{|f'_{i}|}\circ
			f_{i}^{-1}(\gamma )\cdot \chi _{f_{i}(I_{i})}(\gamma )\cdot \func{F}_{\ast
			}\mu _{f_{i}^{-1}(\gamma )}}  \label{2}
	\end{equation}
	when $\func{P}_{f}(\phi _1)(\gamma )\neq 0$. Otherwise, if $\func{P}%
	_{f}(\phi _1)(\gamma )=0$, then $\nu _{\gamma }$ is the Lebesgue\footnote{Regarding the definition of \( \nu_{\gamma} \) on \( B := \{ \gamma \in I \mid \func{P}_f(\phi_1)(\gamma) = 0 \} \), the choice of the Lebesgue measure is not essential. Any other positive measure could be used instead without affecting the statement or the proof of Lemma \ref{transformula}.} measure on $\gamma $ (the expression $\displaystyle{\frac{\phi _1}{|f'_{i}|}%
		\circ f_{i}^{-1}(\gamma )\cdot \frac{\chi _{f_{i}(I_{i})}(\gamma )}{\func{P}%
			_{f}(\phi _1)(\gamma )}\cdot \func{F}_{\ast }\mu _{f_{i}^{-1}(\gamma )}}$
	is understood to be zero outside $f_{i}(I_{i})$ for all $i=1, 2, \cdots$).
	Here and above, $\chi _{A}$ is the characteristic function of the set $A$.
\end{lemma}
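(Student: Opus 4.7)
The plan is to apply the uniqueness part of Rokhlin's Disintegration Theorem: we will exhibit a family $\{\nu_\gamma\}$ of probability measures together with a quotient measure coinciding with the right hand side of (1) and show that together they disintegrate $\func{F}_\ast\mu$; then uniqueness forces $(\func{F}_\ast\mu)_\gamma = \nu_\gamma$ for a.e.\ $\gamma$.

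First I would establish the marginal formula \eqref{1}. Since $F=(f,G)$, we have $\pi_1\circ F = f\circ\pi_1$, so $\pi_{1\ast}\func{F}_\ast\mu = f_\ast \pi_{1\ast}\mu = f_\ast(\phi_1\,m) = \func{P}_f(\phi_1)\,m$, by definition of the Frobenius--Perron operator. This identifies $(\func{F}_\ast\mu)_1$ with the density $\func{P}_f(\phi_1)\in L^1_m$.

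Next, to identify the conditional measures, I would evaluate $\func{F}_\ast\mu(E) = \mu(F^{-1}(E))$ by disintegrating $\mu$ along $\mathcal{F}^s$ and then changing variables. Writing $I = \bigsqcup_i I_i$ modulo an $m$-null set, using that $\mu_{\gamma}$ is supported on the fiber $\{\gamma\}\times K$ (which $F$ maps into $\{f(\gamma)\}\times K$), and substituting $\gamma = f_i^{-1}(\gamma')$ on each branch so that $dm(\gamma) = |f'_i(f_i^{-1}(\gamma'))|^{-1}\,dm(\gamma')$, I obtain
\begin{equation*}
\func{F}_\ast\mu(E) \;=\; \int_{I}\sum_{i\ge 1}\chi_{f_i(I_i)}(\gamma')\,\frac{\phi_1\circ f_i^{-1}(\gamma')}{|f'_i|\circ f_i^{-1}(\gamma')}\,\func{F}_\ast\mu_{f_i^{-1}(\gamma')}(E)\,dm(\gamma').
\end{equation*}
Recognizing the integrand, whenever $\func{P}_f(\phi_1)(\gamma')>0$, as $\nu_{\gamma'}(E)\cdot \func{P}_f(\phi_1)(\gamma')$, and noting that the integrand automatically vanishes on the null set $\{\func{P}_f(\phi_1)=0\}$ (every term in the sum is nonnegative and sums to $\func{P}_f(\phi_1)$), the choice of $\nu_{\gamma'}$ on this set is irrelevant, which justifies the Lebesgue convention there.

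Then I would check that $\{\nu_{\gamma'}\}$ satisfies the hypotheses of Rokhlin's theorem: (i) $\nu_{\gamma'}$ is supported on the fiber $\{\gamma'\}\times K$ since $F_\ast\mu_{f_i^{-1}(\gamma')}$ is concentrated there; (ii) $\nu_{\gamma'}$ is a probability measure for $\func{P}_f(\phi_1)\,m$-a.e.\ $\gamma'$, which follows by evaluating the sum on $\{\gamma'\}\times K$ and using that each $\mu_{f_i^{-1}(\gamma')}$ is a probability, yielding exactly $\func{P}_f(\phi_1)(\gamma')$ in the numerator that cancels with the denominator; (iii) measurability of $\gamma'\mapsto \nu_{\gamma'}(E)$ follows from measurability of each term in the series and the countable sum. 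Applying Rokhlin's uniqueness (item (d) of Theorem \ref{rok}) then yields \eqref{2}.

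The main technical point to handle with care is the interchange of summation and integration in the change-of-variables step, which is legitimate by monotone convergence applied to the positive parts $\mu^+$ and $\mu^-$ separately (both are finite positive measures in $\mathcal{AB}$), together with the fact that the countable family $\{I_i\}$ covers $I$ up to a set of $m$-measure zero and $f$ is non-singular, so that branches $f_i$ are injective and preserve null sets. Everything else is bookkeeping.
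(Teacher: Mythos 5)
Your proposal is correct and follows essentially the same route as the paper: both arguments reduce the claim to the uniqueness part of Rokhlin's theorem and verify the identity $\func{F}_{\ast}\mu(E)=\int_I \nu_{\gamma}(E\cap\gamma)\,\func{P}_f(\phi_1)(\gamma)\,dm(\gamma)$ via the branchwise change of variables $\gamma=f_i(\beta)$, with the paper handling the exceptional fibers through the explicit partition $B_1,B_2,B_3$ where you instead observe that the integrand vanishes wherever $\func{P}_f(\phi_1)=0$. Your additional explicit checks (the marginal identity via $\pi_1\circ F=f\circ\pi_1$, fiber support, and the probability normalization of $\nu_{\gamma}$) are sound and merely make explicit what the paper leaves implicit.
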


	\begin{proof}
	By the uniqueness of the disintegration (see item (d) of Theorem~\ref{rok}), it is enough to prove the following equation 
		\begin{equation*}
			\func{F}_{\ast }\mu (E)=\int_{I}{\nu _{\gamma }(E\cap \gamma )}\func{P}%
			_{f}(\phi _1)(\gamma )dm(\gamma) ,
		\end{equation*}%
		for a measurable set $E\subset \Sigma $. For this purpose, let us define the
		sets $B_{1}=\left\{ \gamma \in I:f^{-1}(\gamma )=\emptyset \right\} $, $%
		B_{2}=\left\{ \gamma \in B_{1}^{c}:\func{P}_{f}(\phi _1)(\gamma
		)=0\right\} $ and $B_{3}=\left( B_{1}\cup B_{2}\right) ^{c}$. The following
		properties can be easily proven:
		
		\begin{enumerate}
			\item[(1)] $B_i \cap B_j = \emptyset$, $f^{-1}(B_i) \cap f^{-1}(B_j) =
			\emptyset$, for all $1\leq i,j \leq 3$ such that $i \neq j$ and $\bigcup
			_{i=1} ^{3} {B_i} = \bigcup _{i=1} ^{3} {f^{-1}(B_i)} =I$;
			
			\item[(2)] $m_{1}(f^{-1}(B_{1}))=\phi _1m(f^{-1}(B_{2}))=0$;
		\end{enumerate}
		
		Using the change of variables $\gamma =f_{i}(\beta )$ and the definition of $%
		\nu _{\gamma }$ (see (\ref{2})), we have 
		\begin{equation*}
			\begin{split}
				&\int_{I}{\nu _{\gamma }(E\cap \gamma )}\func{P}_{f}(\phi _1)(\gamma
				)dm(\gamma) \\
				=&\int_{B_{3}}{\sum_{i=1}^{\infty}{\ {\frac{\phi _1}{|f'_{i}|}%
							\circ f_{i}^{-1}(\gamma )\func{F}_{\ast }\mu _{f_{i}^{-1}(\gamma )}(E)\chi
							_{f_{i}(I_{i})(\gamma )}}}}dm(\gamma ) \\
				=&\sum_{i=1}^{\infty}{\int_{f_{i}(I_{i})\cap B_{3}}{\ {\frac{\phi _1}{|f'_{i}|}\circ f_{i}^{-1}(\gamma )\func{F}_{\ast }\mu _{f_{i}^{-1}(\gamma
								)}(E)}}}dm(\gamma ) \\
				=&\sum_{i=1}^{\infty}{\int_{I_{i}\cap f_{i}^{-1}(B_{3})}{\ {\phi _1(\beta )\mu
							_{\beta }(F^{-1}(E))}}}dm(\beta ) \\
				=&{\int_{f^{-1}(B_{3})}{\ {\phi _1(\beta )\mu _{\beta }(F^{-1}(E))}}}%
				dm(\beta ) \\
				=&\int_{\bigcup_{i=1}^{3}{f^{-1}(B_{i})}}{\ {\ \mu _{\beta }(F^{-1}(E))}}%
				d\phi _1m(\beta ) \\
				=&\int_{I}{\ {\ \mu _{\beta }(F^{-1}(E))}}d\phi _1m(\beta ) \\
				=&\mu (F^{-1}(E)) \\
				=&\func{F}_{\ast }\mu (E).
			\end{split}
		\end{equation*}%
	This completes the proof.
	\end{proof}
	
	The next proposition follows directly from Definition~\ref{restrictionmeasure} and Lemma~\ref{transformula}, and we omit its proof.

\begin{proposition}
	\label{niceformulaab}	Let $F:\Sigma \longrightarrow \Sigma$ ($F=(f,G)$) be a transformation, where $f \in \mathcal{T}$ and $G$ satisfies (H1). Let $\gamma \in \mathcal{F}^{s}$ be a stable leaf. Define the map $F_{\gamma }:K\longrightarrow K$ by 
	\begin{equation}\label{ritiruwt}
		F_{\gamma }=\pi _{2}\circ F|_{\gamma }\circ \pi _{\gamma ,2}^{-1}.
	\end{equation}%
	Then, for each $\mu \in \mathcal{L}^{1}$ and for almost all $\gamma \in
	I$ it holds 
	\begin{equation}
		(\func{F}_{\ast }\mu )|_{\gamma }=\sum_{i=1}^{+\infty}{\func{F}%
			_{\gamma _i \ast }\mu |_{\gamma _i }g _i(\gamma _i)\chi _{f_{i}(I_{i})}(\gamma )}\ \ m%
		\mathnormal{-a.e.}\ \ \gamma \in I  \label{niceformulaa}
	\end{equation}%
	where $\func{F}_{\gamma_i \ast }$ is the pushforward map
	associated to $\func{F}_{\gamma_i}$, $\gamma _i = f_{i}^{-1}(\gamma )$ when $\gamma \in f_i (I_i)$ and $g_i(\gamma)= \dfrac{1}{|f_i^{'}(\gamma)|}$, where $f_i = f|_{I_i}$.
\end{proposition}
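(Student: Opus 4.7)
The plan is to unravel both sides of the claimed identity using the disintegration formulas already obtained in Lemma~\ref{transformula} and the definition of the restriction measure (Definition~\ref{restrictionmeasure}). First I would reduce to the case of a positive measure $\mu \in \mathcal{L}^1 \cap \mathcal{AB}^+$; once the identity is established in this case, it extends to signed measures by Jordan decomposition, linearity of $\func{F}_\ast$, and Remark~\ref{ghtyhh} which guarantees that $(\cdot)|_\gamma$ respects the Jordan decomposition $\mu_1$-a.e.

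For positive $\mu$ with disintegration $(\{\mu_\gamma\}_\gamma, \phi_1 m)$, Lemma~\ref{transformula} gives the disintegration $(\{(\func{F}_\ast \mu)_\gamma\}_\gamma, \func{P}_f(\phi_1) m)$ of $\func{F}_\ast \mu$. Hence by Definition~\ref{restrictionmeasure},
\begin{equation*}
(\func{F}_\ast \mu)|_\gamma \;=\; \pi_{\gamma,2\ast}\bigl(\func{P}_f(\phi_1)(\gamma)\cdot (\func{F}_\ast \mu)_\gamma\bigr).
\end{equation*}
Substituting the explicit formula \eqref{2} for $(\func{F}_\ast \mu)_\gamma$ on $\{\func{P}_f(\phi_1)\neq 0\}$, the prefactor $\func{P}_f(\phi_1)(\gamma)$ cancels the denominator, leaving
\begin{equation*}
(\func{F}_\ast \mu)|_\gamma \;=\; \sum_{i=1}^{+\infty} g_i(\gamma_i)\,\chi_{f_i(I_i)}(\gamma)\, \pi_{\gamma,2\ast}\bigl(\phi_1(\gamma_i)\, \func{F}_\ast \mu_{\gamma_i}\bigr),
\end{equation*}
with $\gamma_i := f_i^{-1}(\gamma)$ and $g_i=1/|f'_i|$. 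On the null set $\{\func{P}_f(\phi_1)=0\}$ we have $\phi_1(\gamma_i)=0$ for each $i$ with $\gamma_i\in I_i$, so both sides vanish and the same formula holds.

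The remaining step is to identify $\pi_{\gamma,2\ast}(\phi_1(\gamma_i)\, \func{F}_\ast \mu_{\gamma_i})$ with $\func{F}_{\gamma_i\ast}\,\mu|_{\gamma_i}$. Since $\mu_{\gamma_i}$ is a probability supported on $\gamma_i$ and $\pi_{\gamma_i,2}$ is a bijection from $\gamma_i$ onto $K$, Definition~\ref{restrictionmeasure} gives $\mu|_{\gamma_i} = \pi_{\gamma_i,2\ast}(\phi_1(\gamma_i)\mu_{\gamma_i})$; pulling this back by $\pi_{\gamma_i,2}^{-1}$ returns $\phi_1(\gamma_i)\mu_{\gamma_i}$. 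Since $F$ maps $\gamma_i$ into $\gamma$, we have $\pi_2\circ F|_{\gamma_i}=\pi_{\gamma,2}\circ F|_{\gamma_i}$, and using the definition \eqref{ritiruwt} of $F_{\gamma_i}$ we obtain
\begin{equation*}
\func{F}_{\gamma_i\ast}\,\mu|_{\gamma_i} \;=\; \pi_{\gamma,2\ast}\circ F|_{\gamma_i\ast}\circ (\pi_{\gamma_i,2}^{-1})_\ast\,\mu|_{\gamma_i} \;=\; \pi_{\gamma,2\ast}\bigl(\phi_1(\gamma_i)\,\func{F}_\ast \mu_{\gamma_i}\bigr),
\end{equation*}
which is exactly the summand above. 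Summing over $i$ yields \eqref{niceformulaa}.

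This proof is essentially bookkeeping, so I do not anticipate a deep obstacle. The only point that requires some care is the behavior on $\{\func{P}_f(\phi_1)=0\}$, where the definition of $(\func{F}_\ast\mu)_\gamma$ is made by an arbitrary convention (Lebesgue on the fiber), and one must verify that this convention does not affect the restriction $(\func{F}_\ast\mu)|_\gamma$; this is handled by the observation that the weighting factor $\func{P}_f(\phi_1)(\gamma)$ multiplies $(\func{F}_\ast\mu)_\gamma$ inside $\pi_{\gamma,2\ast}$, forcing the whole expression to vanish on that set just as the right-hand side of \eqref{niceformulaa} does.
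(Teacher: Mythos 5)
Your proposal is correct and follows exactly the route the paper intends: the paper omits the proof, stating only that the proposition ``follows directly from Definition~\ref{restrictionmeasure} and Lemma~\ref{transformula},'' and your argument is precisely that derivation carried out in detail (reduction to positive measures, cancellation of the $\func{P}_f(\phi_1)$ factor, and identification of $\pi_{\gamma,2\ast}(\phi_1(\gamma_i)\func{F}_\ast\mu_{\gamma_i})$ with $\func{F}_{\gamma_i\ast}\mu|_{\gamma_i}$). The care you take on the set $\{\func{P}_f(\phi_1)=0\}$ and the appeal to Remark~\ref{ghtyhh} for the signed case are exactly the right supporting observations.
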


\subsection{Basic Properties and Convergence to the equilibrium}\label{invt}

\begin{lemma}
	\label{niceformulaac}	Let $F:\Sigma \longrightarrow \Sigma$ ($F=(f,G)$) be a transformation, where $f \in \mathcal{T}$ and $G$ satisfies (H1). For every $\mu \in \mathcal{AB}$ and a stable leaf $%
	\gamma \in \mathcal{F}^{s}$, it holds 
	\begin{equation}
		||\func{F}_{\gamma \ast }\mu |_{\gamma }||_{W}\leq ||\mu |_{\gamma }||_{W},
		\label{weak1}
	\end{equation}%
	where $F_{\gamma }:K\longrightarrow K$ is defined in Proposition \ref%
	{niceformulaab} and $\func{F}_{\gamma \ast }$ is the associated pushforward
	map. Moreover, if $\mu$ is a probability measure on $K$, it holds 
	\begin{equation}
		||\func{F}_{\gamma \ast }{^{n}}\mu ||_{W}=||\mu ||_{W}=1,\ \ \forall \ \
		n\geq 1.  \label{simples}
	\end{equation}
\end{lemma}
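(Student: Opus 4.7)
The plan is to reduce both claims to direct computations against the duality definition of $\|\cdot\|_W$, using that $F_\gamma$ is an $\alpha$-contraction on the fiber $K$.

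First I would set up the key observation from (H1): for $m$-a.e.\ $\gamma\in I$, the fiber map $F_\gamma=\pi_2\circ F|_\gamma\circ \pi_{\gamma,2}^{-1}$ acts on $K$ as $F_\gamma(y)=G(\gamma,y)$, and hence is $\alpha$-Lipschitz with $\alpha<1$. Thus, for any test function $h\colon K\to\mathbb{R}$ with $L(h)\leq 1$ and $|h|_\infty\leq 1$, the composition $h\circ F_\gamma$ satisfies $L(h\circ F_\gamma)\leq \alpha\cdot L(h)\leq 1$ and $|h\circ F_\gamma|_\infty\leq 1$, so $h\circ F_\gamma$ is itself admissible in the supremum defining $\|\cdot\|_W$.

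For inequality \eqref{weak1}, I would apply the change of variables formula for the pushforward of signed measures. Decomposing $\mu|_\gamma=\mu^+|_\gamma-\mu^-|_\gamma$ via Jordan decomposition (well-defined by Remark~\ref{ghtyhh}), and using the standard pushforward identity on each positive part,
\[
\int_K h\, d(\func{F}_{\gamma\ast}\mu|_\gamma)=\int_K h\circ F_\gamma\, d(\mu|_\gamma).
\]
Since $h\circ F_\gamma$ is admissible, the right-hand side is bounded in absolute value by $\|\mu|_\gamma\|_W$. Taking the supremum over all admissible $h$ on the left yields \eqref{weak1}.

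For \eqref{simples}, I would note that pushforwards of probability measures under measurable maps are probability measures on $K$, so it suffices to show $\|\nu\|_W=1$ for any probability $\nu$ on $K$. The lower bound $\|\nu\|_W\geq 1$ comes from plugging the admissible test function $h\equiv 1$ (which has $L(h)=0$ and $|h|_\infty=1$), yielding $\int h\, d\nu=\nu(K)=1$. The upper bound follows from $|\int h\, d\nu|\leq |h|_\infty\,\nu(K)\leq 1$ for every admissible $h$. Iterating the pushforward by $F_\gamma$ and applying this to both $\mu$ and $\func{F}_{\gamma\ast}^n\mu$ completes the argument. The only potentially delicate point is verifying that the pushforward–change of variables formula passes through the signed restriction $\mu|_\gamma$ consistently, which is handled by Remark~\ref{ghtyhh}; otherwise the proof is entirely mechanical.
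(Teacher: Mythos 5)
Your proof is correct and follows essentially the same route as the paper: observe that $h\circ F_{\gamma}$ remains an admissible test function because $F_{\gamma}$ is an $\alpha$-contraction, apply the change of variables for the pushforward, and take the supremum; the probability case is handled identically via $h\equiv 1$ and the trivial upper bound. Your extra remarks on the Jordan decomposition and Remark~\ref{ghtyhh} are harmless elaborations of the same argument.
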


\begin{proof}
	Indeed, since $F_{\gamma }$ is an $\alpha $%
	-contraction, if $|h|_{\infty }\leq 1$ and $L(h)\leq 1$ the same holds for 
	$h\circ F_{\gamma }$. Since 
	\begin{equation*}
		\left\vert \int {h~}d\func{F}_{\gamma \ast }\mu |_{\gamma }\right\vert
		=\left\vert \int {h(F_{\gamma })~}d\mu |_{\gamma }\right\vert ,
	\end{equation*}%
	taking the supremum over $h$ such that $|h|_{\infty }\leq 1$ and $%
	L(h)\leq 1$ we finish the proof.
	
	In order to prove equation (\ref{simples}), consider a probability measure $%
	\mu $ on $K$ and a Lipschitz function $h:K\longrightarrow \mathbb{R}$%
	, such that $||h||_{\infty }\leq 1$ we get immediately that $|\int {h}d\mu |\leq
	||h||_{\infty }\leq 1$, which yields $||\mu ||_{W}\leq 1$. Considering $%
	h\equiv 1$ we get $||\mu ||_{W}=1$.
\end{proof}

\begin{proposition}[Weak contraction on $\mathcal{L}^1$]Let $F:\Sigma \longrightarrow \Sigma$ ($F=(f,G)$) be a transformation, where $f \in \mathcal{T}$ and $G$ satisfies (H1). If $\mu \in \mathcal{L}^{1}$, then $||\func{F}_{\ast }\mu ||_{1}\leq ||\mu ||_{1}$.
	\label{weakcontral11234}
\end{proposition}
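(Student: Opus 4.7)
The plan is to combine the fiberwise formula from Proposition \ref{niceformulaab} with the fiberwise weak-contraction estimate from Lemma \ref{niceformulaac}, and then reduce the fiberwise bound to an integral over $I$ via the change of variables induced by the branches of $f$.

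First, I would apply Proposition \ref{niceformulaab} and write, for $m$-a.e.\ $\gamma\in I$,
\begin{equation*}
(\func{F}_{\ast}\mu)|_\gamma \;=\; \sum_{i=1}^{+\infty} \func{F}_{\gamma_i\ast}\mu|_{\gamma_i}\, g_i(\gamma_i)\,\chi_{f_i(I_i)}(\gamma),
\end{equation*}
where $\gamma_i=f_i^{-1}(\gamma)$. Since $\|\cdot\|_W$ is a norm on the space of signed measures on $K$, the triangle inequality yields
\begin{equation*}
\|(\func{F}_{\ast}\mu)|_\gamma\|_W \;\le\; \sum_{i=1}^{+\infty} g_i(\gamma_i)\, \|\func{F}_{\gamma_i\ast}\mu|_{\gamma_i}\|_W\, \chi_{f_i(I_i)}(\gamma).
\end{equation*}
One needs to note here that the formula of Proposition \ref{niceformulaab}, although stated in terms of restrictions, extends to signed measures via the Jordan decomposition and Remark \ref{ghtyhh}, and that $\func{F}_{\gamma\ast}$ is linear.

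Second, I would apply the weak contraction inequality (\ref{weak1}) of Lemma \ref{niceformulaac} leaf by leaf to bound $\|\func{F}_{\gamma_i\ast}\mu|_{\gamma_i}\|_W \le \|\mu|_{\gamma_i}\|_W$, obtaining
\begin{equation*}
\|(\func{F}_{\ast}\mu)|_\gamma\|_W \;\le\; \sum_{i=1}^{+\infty} g_i(\gamma_i)\, \|\mu|_{\gamma_i}\|_W\, \chi_{f_i(I_i)}(\gamma).
\end{equation*}

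Third, I would integrate both sides against $dm(\gamma)$, swap sum and integral by Tonelli (all terms are non-negative), and for each $i$ perform the change of variables $\beta=f_i^{-1}(\gamma)$. Since $g_i(\beta)=1/|f_i'(\beta)|$ cancels exactly the Jacobian of $f_i$, the $i$-th summand becomes $\int_{I_i}\|\mu|_\beta\|_W\,dm(\beta)$. Summing over $i$ and using that $\{I_i\}$ is a partition of $I$ up to an $m$-null set, one recovers $\int_I\|\mu|_\beta\|_W\,dm(\beta)=\|\mu\|_1$, which proves the desired inequality.

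There is no substantial obstacle: the proof is a routine assembly of the previous two lemmas with a standard branchwise change of variables. The only point that requires care is ensuring that the formula of Proposition \ref{niceformulaab} and the weak-contraction bound of Lemma \ref{niceformulaac}, both phrased in terms of restrictions of signed measures, are applied consistently via the Jordan decomposition, so that the triangle inequality for $\|\cdot\|_W$ and Tonelli's theorem can be invoked without sign issues.
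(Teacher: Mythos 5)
Your proof is correct and follows essentially the same route as the paper: expand $(\func{F}_{\ast}\mu)|_\gamma$ via Proposition \ref{niceformulaab}, apply the triangle inequality for $\|\cdot\|_W$ and the leafwise bound of Lemma \ref{niceformulaac}, then undo the Jacobian factor $g_i$ by the branchwise change of variables $\gamma=f_i(\beta)$ and sum over the partition. The only difference is the order in which the contraction estimate and the change of variables are applied, which is immaterial.
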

\begin{proof}
	In the following, we consider for all $i$, the change of variable $\gamma
	=f_{i}(\beta)$. Thus, Lemma \ref{niceformulaac} and equation (\ref%
	{niceformulaa}) yield 
	\begin{eqnarray*}
		||\func{F}_{\ast }\mu ||_{1} &=&\int_{I}{\ ||(\func{F}_{\ast }\mu
			)|_{\gamma }||_{W}}dm(\gamma ) \\
		&\leq &\sum_{i=1}^{+\infty}{\int_{f(I_{i})}{\ \left\vert \left\vert \dfrac{\func{F}%
					_{f_{i}^{-1}(\gamma )\ast }\mu |_{f_{i}^{-1}(\gamma )}}{|f'_{i}(f_{i}^{-1}(\gamma ))|}\right\vert \right\vert _{W}}dm(\gamma )} \\
		&=&\sum_{i=1}^{+\infty}{\int_{I_{i}}{\left\vert \left\vert \func{F}_{\beta \ast
				}\mu |_{\beta }\right\vert \right\vert _{W}}dm(\beta )} \\
		&=&\sum_{i=1}^{+\infty}{\int_{I_{i}}{\left\vert \left\vert \mu |_{\beta
				}\right\vert \right\vert _{W}}}dm(\beta) \\
		&=&||\mu ||_{1}.
	\end{eqnarray*}
\end{proof}
\begin{proposition}[Lasota-Yorke inequality]
	Let $F:\Sigma \longrightarrow \Sigma$ ($F=(f,G)$) be a transformation, where $f \in \mathcal{T}$ and $G$ satisfies (H1). Then, for all $\mu \in S^{1}$, it holds%
	\begin{equation}
		||\func{F}_{\ast }^{n}\mu ||_{S^{1}}\leq R_2r_2 ^{n}||\mu
		||_{S^{1}}+(C_2 +1)||\mu ||_{1},\ \ \forall n\geq 1,  \label{xx}
	\end{equation}where the constants $R_2, r_2$ and $C_2$ are from Corollary \ref{oirtyuv}.
	\label{lasotaoscilation2}
\end{proposition}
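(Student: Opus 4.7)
The plan is to decompose the $S^1$-norm into its two components, treat each one with the tool tailored to it, and then reassemble. Writing $\phi_1$ for the marginal density of $\mu$, we have by definition
\[
||\func{F}_{\ast}^n \mu||_{S^1} = |\phi_1^{(n)}|_v + ||\func{F}_{\ast}^n \mu||_1,
\]
where $\phi_1^{(n)}$ is the marginal density of $\func{F}_{\ast}^n \mu$. The key observation (from iterating equation (\ref{1}) in Lemma \ref{transformula}) is that $\phi_1^{(n)} = \func{P}_f^n \phi_1$. This reduces the horizontal component of the norm to an object we already understand.

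For the first term, I would simply invoke Corollary \ref{oirtyuv}, which gives
\[
|\func{P}_f^n \phi_1|_v \leq R_2 r_2^n |\phi_1|_v + C_2 |\phi_1|_1.
\]
For the second term, I would iterate the weak contraction property from Proposition \ref{weakcontral11234} to get $||\func{F}_{\ast}^n \mu||_1 \leq ||\mu||_1$ for every $n\geq 1$ (this uses that $\func{F}_{\ast}$ preserves $\mathcal{L}^1$, which follows from that same proposition).

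The one step that requires a small observation — and the only place where the argument is not just plugging in — is comparing $|\phi_1|_1$ to $||\mu||_1$, in order to absorb the $C_2|\phi_1|_1$ term into $(C_2+1)||\mu||_1$. Testing the Wasserstein-like norm against the constant function $h\equiv 1$ gives $||\mu|_\gamma||_W \geq |\mu|_\gamma(K)| = |\phi_1^+(\gamma) - \phi_1^-(\gamma)| = |\phi_1(\gamma)|$, so integrating over $\gamma$ yields $|\phi_1|_1 \leq ||\mu||_1$. Combined with the trivial bound $|\phi_1|_v \leq ||\mu||_{S^1}$, this gives
\[
||\func{F}_{\ast}^n \mu||_{S^1} \leq R_2 r_2^n |\phi_1|_v + C_2 ||\mu||_1 + ||\mu||_1 \leq R_2 r_2^n ||\mu||_{S^1} + (C_2+1)||\mu||_1,
\]
as required.

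There is no real obstacle here; the work was already done in the preceding two results (the scalar Lasota-Yorke of Corollary \ref{oirtyuv} and the fiberwise weak contraction of Proposition \ref{weakcontral11234}), and the proof is essentially assembly. The only subtle point worth checking carefully is the inequality $|\phi_1|_1 \leq ||\mu||_1$, which relies on the Jordan-decomposition-based definition of $\mu|_\gamma$ in Definition \ref{restrictionmeasure} together with Remark \ref{ghtyhh}, ensuring that the test against $h\equiv 1$ indeed recovers the signed mass $\phi_1(\gamma)$ on $m$-a.e.\ leaf.
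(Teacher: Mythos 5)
Your proof is correct and follows essentially the same route as the paper's: decompose the $S^1$-norm, apply Corollary \ref{oirtyuv} to $\func{P}_f^n\phi_1$, use the weak contraction of Proposition \ref{weakcontral11234} on the $\|\cdot\|_1$ part, and absorb $C_2|\phi_1|_1$ via the bound $|\phi_1|_1\leq\|\mu\|_1$ obtained by testing against $h\equiv 1$. The only difference is cosmetic: you are slightly more careful than the paper in writing $\|\mu|_\gamma\|_W\geq|\phi_1(\gamma)|$ with the absolute value, which is the correct form for signed measures.
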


\begin{proof}
	
	Let $\phi _1$ be the marginal density of the
	disintegration of $\mu$. Precisely, $\phi _1=\phi _1^{+}-\phi _1^{-}$%
	, where $\phi _1^{+}=\dfrac{d\pi _1^{\ast }\mu ^{+}}{dm}$ and $\phi
	_1^{-}=\dfrac{d\pi _1^{\ast }\mu ^{-}}{dm}$. By the definition of
	the Wasserstein norm it follows that for every $\gamma $ it holds $||\mu
	|_{\gamma }||_{W}\geq \int 1~d(\mu |_{\gamma })=\phi _1(\gamma )$. Thus, 
	$|\phi_1|_{1}\leq ||\mu ||_{1}.$ This estimate, together with Corollary \ref{oirtyuv} and Proposition \ref{weakcontral11234}, implies that 
	\begin{eqnarray*}
		||\func{F}_{\ast }^{n}\mu ||_{S^{1}} &=&|\func{P}_{f}^{n}\phi _1|_{v}+||%
		\func{F}_{\ast }^{n}\mu ||_{1} \\
		&\leq &R_2r _{2}^{n}|\phi_1|_{v}+C_{2}|\phi_1|_{1}+||\mu ||_{1}
		\\
		&\leq &R_2r_{2}^{n}||\mu ||_{S^{1}}+(C_{2}+1)||\mu ||_{1} \ \ \forall \ n \geq 1.
	\end{eqnarray*}%
\end{proof}

\subsubsection{Convergence to equilibrium}

\begin{lemma}
	Let $F:\Sigma \longrightarrow \Sigma$ ($F=(f,G)$) be a transformation, where $f \in \mathcal{T}$ and $G$ satisfies (H1). For all signed measures $\mu $ on $K$ and for all $\gamma \in \mathcal{F}^s$, it holds%
	\begin{equation*}
		||\func{F}_{\gamma \ast }\mu ||_{W}\leq \alpha ||\mu ||_{W}+|\mu (K)|
	\end{equation*}%
	($\alpha $ is the rate of contraction of $G$, see (H1)). In
	particular, if $\mu (K)=0$ then%
	\begin{equation*}
		||\func{F}_{\gamma \ast }\mu ||_{W}\leq \alpha ||\mu ||_{W},
	\end{equation*}which provides a contraction on the space of zero-mass measures.
	\label{quasicontract}
\end{lemma}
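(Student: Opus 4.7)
The plan is to exploit the dual characterization of $\|\cdot\|_{W}$ via integration against Lipschitz test functions, and handle the constraint $|h|_\infty \le 1$ (which fails for $h\circ F_\gamma$ scaled up by $1/\alpha$) by splitting off an additive constant that is absorbed into a $|\mu(K)|$ term.

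First I would fix any test function $h\colon K\to\mathbb R$ with $L(h)\le 1$ and $|h|_\infty\le 1$, and use the change of variables formula
\[
\int h\, d(\func{F}_{\gamma\ast}\mu)=\int h\circ F_\gamma\, d\mu.
\]
Since $F_\gamma$ is an $\alpha$-contraction by (H1), the composition $h\circ F_\gamma$ satisfies $L(h\circ F_\gamma)\le \alpha$. However $|h\circ F_\gamma|_\infty$ is only bounded by $1$, so I cannot simply say $(1/\alpha)\, h\circ F_\gamma$ is an admissible test function.

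The key step is the following decomposition. Let $M=\sup_K(h\circ F_\gamma)$ and $m_0=\inf_K(h\circ F_\gamma)$, and set $c=(M+m_0)/2$. Because $\diam(K)=1$ and $L(h\circ F_\gamma)\le\alpha$, the oscillation $M-m_0$ is at most $\alpha$, so the function
\[
g:=\frac{h\circ F_\gamma - c}{\alpha}
\]
satisfies $|g|_\infty\le 1/2\le 1$ and $L(g)\le 1$. Moreover $|c|\le|h|_\infty\le 1$. Thus $g$ is an admissible test function in the definition of $\|\cdot\|_W$, and
\[
\int h\circ F_\gamma\, d\mu = \alpha\int g\, d\mu + c\,\mu(K).
\]
Taking absolute values and using $|c|\le 1$ gives
\[
\Bigl|\int h\, d(\func{F}_{\gamma\ast}\mu)\Bigr|\le \alpha\|\mu\|_W+|\mu(K)|.
\]
Taking the supremum over admissible $h$ yields the first inequality. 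The second assertion follows immediately: if $\mu(K)=0$ then the constant term vanishes, giving the contraction $\|\func{F}_{\gamma\ast}\mu\|_W\le\alpha\|\mu\|_W$.

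The only delicate point is the construction of the admissible $g$; everything else is routine. The trick of centering $h\circ F_\gamma$ at the midpoint of its range, which works precisely because $\diam(K)=1$ is assumed (the paper explicitly normalized this to avoid extra constants), is what allows the contraction factor $\alpha$ to be extracted cleanly from the Lipschitz test.
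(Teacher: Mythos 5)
Your proof is correct and follows essentially the same route as the paper: both arguments subtract a suitable constant from $h\circ F_\gamma$ so that the rescaled function $(h\circ F_\gamma-\text{const})/\alpha$ becomes an admissible test function, with the constant contributing the $|\mu(K)|$ term. The only difference is cosmetic — you center at the midpoint of the range of $h\circ F_\gamma$, while the paper centers at its value at an arbitrary point of $K$; both choices work because $\diam(K)=1$.
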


\begin{proof}
	If $L(h)\leq 1$ and $||h||_{\infty }\leq 1$, then $h\circ F_{\gamma }$ is $%
	\alpha $-Lipschitz. Moreover, since $||h||_{\infty }\leq 1$, then $||h\circ
	F_{\gamma }-\theta ||_{\infty }\leq \alpha $, for some $\theta $ such that $%
	|\theta |\leq 1$. Indeed, let $z\in K$ be such that $|h\circ F_{\gamma
	}(z)|\leq 1$, set $\theta =h\circ F_{\gamma }(z)$ and let $d_{2}$ be the
	 metric on $K$. Since $\diam(K)=1$, we have 
	\begin{equation*}
		|h\circ F_{\gamma }(y)-\theta |\leq \alpha d_{2}(y,z)\leq \alpha
	\end{equation*}%
	and consequently $||h\circ F_{\gamma }-\theta ||_{\infty }\leq \alpha $.
	
	This implies,
	
	\begin{align*}
		\left\vert \int_{K}{h}d\func{F}_{\gamma \ast }\mu \right\vert &
		=\left\vert \int_{K}{h\circ F_{\gamma }}d\mu \right\vert \\
		& \leq \left\vert \int_{K}{h\circ F_{\gamma }-\theta }d\mu \right\vert
		+\left\vert \int_{K}{\theta }d\mu \right\vert \\
		& =\alpha \left\vert \int_{K}{\frac{h\circ F_{\gamma }-\theta }{\alpha }}%
		d\mu \right\vert +|\theta ||\mu (K)|.
	\end{align*}%
	And taking the supremum over $h$ such that $|h|_{\infty }\leq 1$ and $%
	L(h)\leq 1$ we have $||\func{F}_{\gamma \ast }\mu ||_{W}\leq \alpha ||\mu
	||_{W}+\mu (K)$. In particular, if $\mu (K)=0$, we get the second
	part.
\end{proof}

\begin{proposition}
	\label{5.6} 	Let $F:\Sigma \longrightarrow \Sigma$ ($F=(f,G)$) be a transformation, where $f \in \mathcal{T}$ and $G$ satisfies (H1). For all signed measures $\mu \in \mathcal{L}^{1}$, it holds 
	\begin{equation}
		||\func{F}_{\ast }\mu ||_{1}\leq \alpha ||\mu ||_{1}+(\alpha +1)|\phi
		_1|_{1}.  \label{abovv}
	\end{equation}
\end{proposition}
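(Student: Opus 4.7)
The strategy is to combine the fiberwise decomposition of $\func{F}_{\ast}\mu$ provided by Proposition \ref{niceformulaab} with the quasi-contraction estimate of Lemma \ref{quasicontract} on each stable leaf, and then assemble everything by performing the branch-by-branch change of variables $\gamma = f_i(\beta)$, exactly as in the proof of Proposition \ref{weakcontral11234}, but with the sharper per-leaf bound now available.

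Concretely, I would start from
$$||\func{F}_{\ast}\mu||_1 = \int_I ||(\func{F}_{\ast}\mu)|_{\gamma}||_W \, dm(\gamma),$$
insert the identity
$$(\func{F}_{\ast}\mu)|_{\gamma} = \sum_{i=1}^{+\infty} \func{F}_{\gamma_i \ast}(\mu|_{\gamma_i})\, g_i(\gamma_i)\, \chi_{f_i(I_i)}(\gamma)$$
with $\gamma_i = f_i^{-1}(\gamma)$, and push $||\cdot||_W$ inside the sum via the triangle inequality. Applying Lemma \ref{quasicontract} leafwise yields
$$||\func{F}_{\gamma_i \ast}(\mu|_{\gamma_i})||_W \leq \alpha\, ||\mu|_{\gamma_i}||_W + |(\mu|_{\gamma_i})(K)|,$$
and Remark \ref{ghtyhh} together with the definition of the restriction gives $(\mu|_{\gamma_i})(K) = \phi_1(\gamma_i)$, so the trailing term equals $|\phi_1(\gamma_i)|$.

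It then remains to integrate in $\gamma$ and change variables by $\beta = f_i^{-1}(\gamma)$ on each $f_i(I_i)$. The Jacobian $1/g_i(\beta)$ cancels the weight $g_i(\gamma_i)$, so every integral over $f_i(I_i)$ becomes one over $I_i$. Since $\{I_i\}$ partitions $I$ up to a null set, Tonelli reassembles
$$\sum_{i=1}^{+\infty}\int_{I_i}\bigl[\alpha\,||\mu|_\beta||_W + |\phi_1(\beta)|\bigr]\,dm(\beta) = \alpha\, ||\mu||_1 + |\phi_1|_1,$$
and the target inequality $||\func{F}_{\ast}\mu||_1 \leq \alpha\,||\mu||_1 + (\alpha+1)|\phi_1|_1$ follows at once, the constant $(\alpha+1)$ being generously larger than the $1$ produced by this route.

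I do not anticipate a substantive obstacle: the argument is essentially bookkeeping tying together Proposition \ref{niceformulaab} and Lemma \ref{quasicontract}. The only subtleties to watch are (i) interpreting $\mu|_{\gamma_i}$ for a \emph{signed} measure, where Remark \ref{ghtyhh} ensures the Jordan-based definition is independent of representation and in particular $(\mu|_{\gamma_i})(K) = \phi_1(\gamma_i)$; and (ii) justifying the term-by-term exchange of sum and integral, which is legitimate by Tonelli once every term is nonnegative after the triangle inequality.
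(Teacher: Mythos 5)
Your proof is correct, and it is a genuinely more direct route than the one in the paper. The paper writes $\mu|_\gamma=\phi_1(\gamma)\overline{\mu}|_\gamma$ with $\overline{\mu^\pm}|_\gamma$ normalized fiber probabilities, inserts an intermediate term, and estimates two pieces $\func{I}_1,\func{I}_2$ separately, invoking only the zero-mass ("pure contraction") case of Lemma \ref{quasicontract} applied to $\overline{\mu^+}|_\beta-\overline{\mu^-}|_\beta$; this bookkeeping produces the constant $\alpha+1$. You instead apply the full inequality of Lemma \ref{quasicontract}, mass term included, directly to the signed restriction $\mu|_{\gamma_i}$, and your identification $(\mu|_{\gamma_i})(K)=\phi_1^+(\gamma_i)-\phi_1^-(\gamma_i)=\phi_1(\gamma_i)$ is legitimate by Definition \ref{restrictionmeasure} and Remark \ref{ghtyhh}; the change of variables and the Tonelli exchange are exactly as in Proposition \ref{weakcontral11234}. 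The payoff is that your argument actually yields the sharper estimate $||\func{F}_{\ast}\mu||_1\leq \alpha||\mu||_1+|\phi_1|_1$, of which the stated inequality is a weakening; since only the qualitative form $\alpha||\mu||_1+C|\phi_1|_1$ is used downstream (Corollary \ref{nicecoro} and Proposition \ref{5.8}), nothing is lost, but your constant is strictly better. The two subtleties you flag are indeed the only ones, and both are handled: Lemma \ref{quasicontract} is stated for arbitrary signed measures on $K$, and all terms are nonnegative after the triangle inequality.
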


\begin{proof}
	Consider a signed measure $\mu \in \mathcal{L}^{1}$ and its restriction on
	the leaf $\gamma $, $\mu |_{\gamma }=\pi _{2,\gamma\ast }(\phi _1(\gamma
	)\mu _{\gamma })$. Set%
	\begin{equation*}
		\overline{\mu }|_{\gamma }=\pi _{2,\gamma \ast }\mu _{\gamma }.
	\end{equation*}%
	If $\mu $ is a positive measure then $\overline{\mu }|_{\gamma }$ is a
	probability on $K$ and $\mu |_{\gamma }=\phi _1(\gamma )\overline{\mu }%
	|_{\gamma }$. Then, the expression given by Proposition \ref{niceformulaab}
	yields%
	\begin{equation*}
		\begin{split}
			&||\func{F}_{\ast }\mu ||_{1} \\
			&\leq \sum_{i=1}^{\infty}{\ \int_{f(I_{i})}{\
					\left\vert \left\vert \frac{\func{F}_{f_{i}^{-1}(\gamma )\ast }\overline{\mu
							^{+}}|_{f_{i}^{-1}(\gamma )}\phi _1^{+}(f_{i}^{-1}(\gamma ))}{|f'_{i}|\circ f_{i}^{-1}(\gamma )}-\frac{\func{F}_{f_{i}^{-1}(\gamma )\ast }%
						\overline{\mu ^{-}}|_{f_{i}^{-1}(\gamma )}\phi _1^{-}(f_{i}^{-1}(\gamma ))%
					}{|f'_{i}|\circ f_{i}^{-1}(\gamma )}\right\vert \right\vert _{W}}%
				dm(\gamma )} \\
			&\leq \func{I}_{1}+\func{I}_{2},
		\end{split}
	\end{equation*}%
	where%
	\begin{equation*}
		\func{I}_{1}=\sum_{i=1}^{\infty}{\ \int_{f(I_{i})}{\ \left\vert \left\vert \frac{%
					\func{F}_{f_{i}^{-1}(\gamma )\ast }\overline{\mu ^{+}}|_{f_{i}^{-1}(\gamma
						)}\phi _1^{+}(f_{i}^{-1}(\gamma ))}{|f'_{i}|\circ f_{i}^{-1}(\gamma )}%
				-\frac{\func{F}_{f_{i}^{-1}(\gamma )\ast }\overline{\mu ^{+}}%
					|_{f_{i}^{-1}(\gamma )}\phi _1^{-}(f_{i}^{-1}(\gamma ))}{|f'_{i}|\circ f_{i}^{-1}(\gamma )}\right\vert \right\vert _{W}}dm(\gamma )%
		}
	\end{equation*}%
	and%
	\begin{equation*}
		\func{I}_{2}=\sum_{i=1}^{\infty}{\ \int_{f(I_{i})}{\ \left\vert \left\vert \frac{%
					\func{F}_{f_{i}^{-1}(\gamma )\ast }\overline{\mu ^{+}}|_{f_{i}^{-1}(\gamma
						)}\phi _1^{-}(f_{i}^{-1}(\gamma ))}{|f'_{i}|\circ f_{i}^{-1}(\gamma )}%
				-\frac{\func{F}_{f_{i}^{-1}(\gamma )\ast }\overline{\mu ^{-}}%
					|_{f_{i}^{-1}(\gamma )}\phi _1^{-}(f_{i}^{-1}(\gamma ))}{|f'_{i}|\circ f_{i}^{-1}(\gamma )}\right\vert \right\vert _{W}}dm(\gamma )%
		}.
	\end{equation*}%
	In the following we estimate $\func{I}_{1}$ and $\func{I}_{2}$. By Lemma \ref%
	{niceformulaac} and a change of variable we have 
	\begin{eqnarray*}
		\func{I}_{1} &=&\sum_{i=1}^{\infty}{\ \int_{f(I_{i})}{\ \left\vert \left\vert 
				\func{F}_{f_{i}^{-1}(\gamma )\ast }\overline{\mu ^{+}}|_{f_{i}^{-1}(\gamma
					)}\right\vert \right\vert _{W}\frac{|\phi _1^{+}-\phi _1^{-}|}{|f'_{i}|}\circ f_{i}^{-1}(\gamma )}dm(\gamma )} \\
		&\leq &\int_{I}{\ \left\vert \left\vert \func{F}_{\beta \ast }\overline{%
				\mu ^{+}}|_{\beta }\right\vert \right\vert _{W}|\phi _1^{+}-\phi
			_1^{-}|(\beta )}dm(\beta ) \\
		&=&\int_{I}{\ |\phi _1^{+}-\phi _1^{-}|(\beta )}dm(\beta ) \\
		&=&|\phi_1|_{1},
	\end{eqnarray*}%
	and by Lemma \ref{quasicontract} we have%
	\begin{eqnarray*}
		\func{I}_{2} &=&\sum_{i=1}^{\infty}{\ \int_{f(I_{i})}{\ \left\vert \left\vert 
				\func{F}_{f_{i}^{-1}(\gamma )\ast }\left( \overline{\mu ^{+}}%
				|_{f_{i}^{-1}(\gamma )}-\overline{\mu ^{-}}|_{f_{i}^{-1}(\gamma )}\right)
				\right\vert \right\vert _{W}\frac{\phi _1^{-}}{|f'_{i}|}\circ
				f_{i}^{-1}(\gamma )}dm(\gamma )} \\
		&\leq &\sum_{i=1}^{\infty}{\ \int_{I_{i}}{\ \left\vert \left\vert \func{F}_{\beta
					\ast }\left( \overline{\mu ^{+}}|_{\beta }-\overline{\mu ^{-}}|_{\beta
				}\right) \right\vert \right\vert _{W}\phi _1^{-}(\beta )}dm(\beta )} \\
		&\leq &\alpha \int_{I}{\ \left\vert \left\vert \overline{\mu ^{+}}%
			|_{\beta }-\overline{\mu ^{-}}|_{\beta }\right\vert \right\vert _{W}\phi
			_1^{-}(\beta )}dm(\beta ) \\
		&\leq &\alpha \int_{I}{\ \left\vert \left\vert \overline{\mu ^{+}}%
			|_{\beta }\phi _1^{-}(\beta )-\overline{\mu ^{-}}|_{\beta }\phi
			_1^{-}(\beta )\right\vert \right\vert _{W}}dm(\beta ) \\
		&\leq &\alpha \int_{I}{\ \left\vert \left\vert \overline{\mu ^{+}}%
			|_{\beta }\phi _1^{-}(\beta )-\overline{\mu ^{+}}|_{\beta }\phi
			_1^{+}(\beta )\right\vert \right\vert _{W}}dm(\beta )\\&+&\alpha
		\int_{I}{\ \left\vert \left\vert \overline{\mu ^{+}}|_{\beta }\phi
			_1^{+}(\beta )-\overline{\mu ^{-}}|_{\beta }\phi _1^{-}(\beta
			)\right\vert \right\vert _{W}}dm(\beta ) \\
		&=&\alpha |\phi _1|_{1}+\alpha ||\mu ||_{1}.
	\end{eqnarray*}%
	Summing the above estimates we finish the proof.
\end{proof}

Iterating (\ref{abovv}) we get the following corollary.

\begin{corollary}
	Let $F:\Sigma \longrightarrow \Sigma$ ($F=(f,G)$) be a transformation, where $f \in \mathcal{T}$ and $G$ satisfies (H1). For all signed measures $\mu \in \mathcal{L}^{1}$ it holds 
	\begin{equation*}
		||\func{F}_{\ast }^{n}\mu ||_{1}\leq \alpha ^{n}||\mu ||_{1}+\overline{%
			\alpha }|\phi _1|_{1},
	\end{equation*}%
	where $\overline{\alpha }=\frac{1+\alpha }{1-\alpha }$. \label{nicecoro}
\end{corollary}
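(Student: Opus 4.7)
The plan is to iterate the one-step inequality from Proposition~\ref{5.6}, namely
\[
||\func{F}_{\ast }\mu ||_{1}\leq \alpha ||\mu ||_{1}+(\alpha +1)|\phi _1|_{1},
\]
and then sum the resulting geometric series. The essential preliminary observation is that when we apply Proposition~\ref{5.6} to $\func{F}_{\ast}^{k}\mu$ in place of $\mu$, the $|\phi_1|_1$-term that appears on the right is the $L^{1}$ norm of the marginal density of $\func{F}_{\ast}^{k}\mu$, not of $\mu$ itself. So the first step is to control how that marginal density evolves.

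First I would record that, by Lemma~\ref{transformula}, the marginal density of $\func{F}_{\ast}\mu$ is $\func{P}_{f}\phi_{1}$. Writing $\phi_{1}=\phi_{1}^{+}-\phi_{1}^{-}$ with $\phi_{1}^{\pm}\geq 0$ and using that $\func{P}_{f}$ preserves the $L^{1}$ norm of nonnegative functions, one obtains
\[
|\func{P}_{f}\phi_{1}|_{1}\leq |\func{P}_{f}\phi_{1}^{+}|_{1}+|\func{P}_{f}\phi_{1}^{-}|_{1}=|\phi_{1}^{+}|_{1}+|\phi_{1}^{-}|_{1}=|\phi_{1}|_{1}.
\]
Iterating this, the marginal density $\phi_{1}^{(k)}$ of $\func{F}_{\ast}^{k}\mu$ satisfies $|\phi_{1}^{(k)}|_{1}\leq |\phi_{1}|_{1}$ for every $k\geq 0$.

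Next, I would proceed by induction on $n$. The case $n=1$ is exactly Proposition~\ref{5.6}. Assuming the bound for $n-1$, apply Proposition~\ref{5.6} to $\func{F}_{\ast}^{n-1}\mu$:
\[
||\func{F}_{\ast}^{n}\mu||_{1}\leq \alpha\,||\func{F}_{\ast}^{n-1}\mu||_{1}+(\alpha+1)\,|\phi_{1}^{(n-1)}|_{1}\leq \alpha\,||\func{F}_{\ast}^{n-1}\mu||_{1}+(\alpha+1)\,|\phi_{1}|_{1}.
\]
Unfolding this recursion and using the inductive hypothesis gives
\[
||\func{F}_{\ast}^{n}\mu||_{1}\leq \alpha^{n}||\mu||_{1}+(\alpha+1)|\phi_{1}|_{1}\sum_{k=0}^{n-1}\alpha^{k}\leq \alpha^{n}||\mu||_{1}+\frac{\alpha+1}{1-\alpha}\,|\phi_{1}|_{1},
\]
and the last constant is precisely $\overline{\alpha}$, which finishes the proof.

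There is really no hard part here; the only point that merits a line of justification (and which I would be careful to include) is the monotonicity $|\phi_{1}^{(k)}|_{1}\leq |\phi_{1}|_{1}$, because a careless application of Proposition~\ref{5.6} at each step would produce a term that grows with $k$ and destroy the geometric summation. Once that is in place, the argument is a one-step induction plus a geometric series.
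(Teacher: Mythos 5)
Your proof follows exactly the route the paper takes: the paper's entire ``proof'' of Corollary~\ref{nicecoro} is the sentence ``Iterating (\ref{abovv}) we get the following corollary,'' and your induction plus geometric series is precisely that iteration, made explicit. You are also right that the one point genuinely needing justification is the control $|\phi_1^{(k)}|_1\leq|\phi_1|_1$ of the marginal density of $\func{F}_{\ast}^{k}\mu$, which the paper leaves implicit.

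One step in your justification of that bound is not correct as written: the final equality $|\phi_{1}^{+}|_{1}+|\phi_{1}^{-}|_{1}=|\phi_{1}|_{1}$ fails in general. Although $\mu^{+}$ and $\mu^{-}$ are mutually singular on $\Sigma$, their marginals $\pi_{1\ast}\mu^{+}$ and $\pi_{1\ast}\mu^{-}$ need not be (e.g.\ positive and negative mass sitting on the same fiber), so $\phi_{1}^{+}$ and $\phi_{1}^{-}$ can overlap and one only has $|\phi_{1}^{+}|_{1}+|\phi_{1}^{-}|_{1}\geq\int|\phi_{1}^{+}-\phi_{1}^{-}|\,dm=|\phi_{1}|_{1}$, which points the wrong way for your chain. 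The fix is one line: $\func{P}_{f}$ is a positive operator preserving integrals, so $|\func{P}_{f}g|\leq\func{P}_{f}|g|$ pointwise and hence
\begin{equation*}
|\func{P}_{f}\phi_{1}|_{1}\leq\int\func{P}_{f}|\phi_{1}|\,dm=\int|\phi_{1}|\,dm=|\phi_{1}|_{1},
\end{equation*}
applied directly to the signed density $\phi_{1}=\phi_{1}^{+}-\phi_{1}^{-}$. With that replacement the monotonicity $|\phi_{1}^{(k)}|_{1}\leq|\phi_{1}|_{1}$ holds and the rest of your argument goes through verbatim.
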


Let us consider the set of zero average measures in $S^{1}$ defined by 
\begin{equation}
	\mathcal{V}_{s}=\{\mu \in S^{1}:\mu (\Sigma )=0\}.  \label{mathV}
\end{equation}%
Note that, for all $\mu \in \mathcal{V}_{s}$ we have $\pi _{1*}\mu
(I)=0$. Moreover, since $\pi _{1*}\mu =\phi _1m$ ($\phi
_1=\phi _1^{+}-\phi _1^{-}$), we have $\displaystyle{\int_{I}{\phi
		_1}dm=0}$. 

\begin{proposition}[Exponential convergence to equilibrium]
	\label{5.8} 	Let $F:\Sigma \longrightarrow \Sigma$ ($F=(f,G)$) be a transformation, where $f \in \mathcal{T}$ and $G$ satisfies (H1). There exist $D_{2}\in \mathbb{R}$ and $0<\beta _{1}<1$ such that
	for every signed measure $\mu \in \mathcal{V}_{s}$, it holds 
	\begin{equation*}
		||\func{F}_{\ast }^{n}\mu ||_{1}\leq D_{2}\beta _{1}^{n}||\mu ||_{S^{1}},
	\end{equation*}%
	for all $n\geq 1$. \label{quasiquasiquasi}
\end{proposition}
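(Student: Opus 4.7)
The plan is to combine two contraction estimates we already have: Corollary \ref{nicecoro} gives contraction of $\|\cdot\|_1$ modulo the $L^1$ norm of the marginal, while Proposition \ref{jkkgnhn} (applied on $BV_m$) gives exponential decay of the marginal density whenever its integral vanishes. Splitting the iteration into two halves and invoking each ingredient on the appropriate half will yield the result.

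First, I would record the two immediate consequences of $\mu \in \mathcal{V}_s$: since $\mu(\Sigma)=0$, one has $\int_I \phi_1\,dm = \pi_{1*}\mu(I) = 0$, and because $\func{F}_*$ preserves total mass, each iterate $\func{F}_*^k\mu$ also lies in $\mathcal{V}_s$, with marginal density $\phi_1^{(k)} := \func{P}_f^k \phi_1$ satisfying $\int \phi_1^{(k)}\,dm = 0$. Since $\phi_1 \in BV_m$, Proposition \ref{jkkgnhn} gives
\begin{equation*}
|\phi_1^{(k)}|_1 \le |\phi_1^{(k)}|_v \le H_2 q^k |\phi_1|_v \le H_2 q^k \|\mu\|_{S^1}.
\end{equation*}

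Next, for $n \ge 1$ and any $0 \le k \le n$, I would apply Corollary \ref{nicecoro} to the measure $\nu := \func{F}_*^k\mu \in \mathcal{L}^1$, iterated $n-k$ times:
\begin{equation*}
\|\func{F}_*^n \mu\|_1 = \|\func{F}_*^{n-k}\nu\|_1 \le \alpha^{n-k}\|\nu\|_1 + \overline{\alpha}\,|\phi_1^{(k)}|_1.
\end{equation*}
Combining this with Proposition \ref{weakcontral11234} (which yields $\|\nu\|_1 \le \|\mu\|_1 \le \|\mu\|_{S^1}$) and with the BV-decay estimate above produces
\begin{equation*}
\|\func{F}_*^n \mu\|_1 \le \alpha^{n-k}\|\mu\|_{S^1} + \overline{\alpha}\,H_2\, q^k\,\|\mu\|_{S^1}.
\end{equation*}

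Finally, I would optimize the splitting by taking $k = \lfloor n/2 \rfloor$, so that setting $\beta_1 := \max\{\alpha,q\}^{1/2} \in (0,1)$ and $D_2 := 1+\overline{\alpha}H_2$ (with a small additive constant to absorb the floor), one obtains
\begin{equation*}
\|\func{F}_*^n \mu\|_1 \le D_2 \beta_1^n \|\mu\|_{S^1},
\end{equation*}
which is the desired exponential convergence. There is no serious obstacle here; the only point requiring mild care is verifying that the zero-average condition is propagated by $\func{F}_*$ (and by $\func{P}_f$ on the marginal), which is routine since both operators preserve total integrals. The essential mechanism is the standard one: fiber contraction (Corollary \ref{nicecoro}) handles the stable direction, while the spectral gap of $\func{P}_f$ on $BV_m$ supplies the decay of the marginal $L^1$ cost term.
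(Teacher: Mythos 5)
Your proposal is correct and follows essentially the same route as the paper's proof: split $n$ into two halves, use the weak contraction of $\|\cdot\|_1$ together with Corollary \ref{nicecoro} on one half, and the decay $|\func{P}_f^k\phi_1|_v\leq H_2q^k|\phi_1|_v$ from Proposition \ref{jkkgnhn} (valid since $\int\phi_1\,dm=0$) on the other, then set $\beta_1=\max\{\sqrt{\alpha},\sqrt{q}\}$. The only cosmetic difference is the order in which the two halves are peeled off, which does not affect the argument.
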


\begin{proof}
	Given $\mu \in \mathcal{V}_s$ and denoting $\phi _1=\phi _1^{+}-\phi
	_1^{-}$, it holds that $\int {\phi }_1dm=0$. Moreover, Proposition \ref%
	{jkkgnhn} yields $|\func{P}_{f}^{n}(\phi _1)|_{v}\leq H_2q^{n}|\phi _1|_{v}$
	for all $n\geq 1$, then $|\func{P}_{f}^{n}(\phi _1)|_{1}\leq H_2q^{n}||\mu
	||_{S^{1}}$ for all $n\geq 1$.
	
	Let $l$ and $0\leq d\leq 1$ be the coefficients of the division of $n$ by $2$%
	, i.e. $n=2l+d$. Thus, $l=\frac{n-d}{2}$ (by Proposition \ref%
	{weakcontral11234}, we have $||\func{F}_{\ast }^{n}\mu ||_{1}\leq ||\mu
	||_{1}$, for all $n$, and $||\mu ||_{1}\leq ||\mu ||_{S^{1}}$) and by
	Corollary \ref{nicecoro}, it holds (below, set $\beta _{1}=\max \{\sqrt{q},%
	\sqrt{\alpha }\}$)
	
	\begin{eqnarray*}
		||\func{F}_{\ast }^{n}\mu ||_{1} &\leq &||\func{F}_{\ast }^{2l+d}\mu ||_{1}
		\\
		&\leq &\alpha ^{l}||\func{F}_{\ast }^{l+d}\mu ||_{1}+\overline{\alpha }%
		\left\vert \dfrac{d(\pi _{1*}(\func{F}^{\ast l+d}\mu ))}{dm}%
		\right\vert _{1} \\
		&\leq &\alpha ^{l}||\mu ||_{1}+\overline{\alpha }|\func{P}_{f}^{l}(\phi
		_1)|_{1} \\
		&\leq &\alpha ^{l}||\mu ||_{1}+\overline{\alpha }H_2 q^l||\mu||_ {S^1} \\
		&= &\alpha ^{\frac{n-d}{2}}||\mu ||_{1}+\overline{\alpha }H_2 q^{\frac{n-d}{2}}||\mu||_ {S^1}  \\ 
		&\leq &\alpha ^{\frac{-d}{2}} \alpha ^{\frac{n}{2}}||\mu ||_{S^1}+\overline{\alpha }H_2 q^{\frac{-d}{2}} q^ {\frac{n}{2}}||\mu||_ {S^1}  \\
		&\leq &(1+\overline{\alpha }H_2)\beta _{1}^{-d}\beta _{1}^{n}||\mu ||_{S^{1}}
		\\
		&\leq &D_{2}\beta _{1}^{n}||\mu ||_{S^{1}},
	\end{eqnarray*}%
	where $D_{2}=\dfrac{1+\overline{\alpha }H_2}{\beta _{1}}$.
\end{proof}

Now we show that under the assumptions taken, the system has a unique
invariant measure $\mu _{0}\in S^{1}$.

\begin{athm}\label{gfhduer}	Let $F:\Sigma \longrightarrow \Sigma$, $F=(f,G)$, be a transformation, where $f \in \mathcal{T}$ and $G$ satisfies (H1). Then, it has a unique invariant probability in $S^{1}$.
\end{athm}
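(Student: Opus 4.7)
The plan is to split the statement into existence and uniqueness, both of which follow almost for free from results already established in the paper. For existence, I combine Lemma~\ref{existencef} with Proposition~\ref{kjdhkskjfkjskdjf}: the former yields an $f$-invariant absolutely continuous probability $m_1 = h_1\, m$ with $h_1 \in BV_m$, and the latter lifts $m_1$ to an $F$-invariant probability $\mu_0$ on $\Sigma$ with $\pi_{1\ast}\mu_0 = m_1$. To verify $\mu_0 \in S^1$, observe that its marginal density is $\phi_1 = h_1 \in BV_m$, which gives the $BV_m$ regularity required by \eqref{sinfi}. Since $\mu_0$ is positive we have $\mu_0^- = 0$, and item~(a) of Theorem~\ref{rok} says each fiber measure $\mu_\gamma$ is a probability supported on $\gamma$, so the restriction $\mu_0|_\gamma$ has total mass $h_1(\gamma)$. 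Consequently
$$\|\mu_0\|_1 = \int W_1(\mu_0|_\gamma, 0)\, dm(\gamma) \le \int h_1(\gamma)\, dm(\gamma) = 1 < \infty,$$
placing $\mu_0$ in $\mathcal{L}^1$, and therefore in $S^1$.

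For uniqueness, I appeal to the exponential convergence to equilibrium, Proposition~\ref{5.8}. If $\mu_1, \mu_2 \in S^1$ are two $F$-invariant probabilities, then $\mu := \mu_1 - \mu_2 \in S^1$ has zero total mass, so $\mu \in \mathcal{V}_s$. Since $\func{F}_\ast^n \mu = \mu$ for all $n \geq 1$ by invariance, Proposition~\ref{5.8} yields
$$\|\mu\|_1 = \|\func{F}_\ast^n \mu\|_1 \le D_2 \beta_1^n \|\mu\|_{S^1}$$
for every $n$, and letting $n \to \infty$ forces $\|\mu\|_1 = 0$.

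The only remaining technical step is to confirm that $\|\mu\|_1 = 0$ implies $\mu = 0$, so that $\mu_1 = \mu_2$. This holds because $W_1(\mu^+|_\gamma, \mu^-|_\gamma) = 0$ $m$-a.e.\ gives $\mu^+|_\gamma = \mu^-|_\gamma$ $m$-a.e., and the disintegration identity in Theorem~\ref{rok}(c), together with Remark~\ref{ghtyhh} (which frees the restriction from the choice of decomposition), then yields $\mu^+ = \mu^-$. I do not foresee any serious obstacle: the nontrivial work---the Lasota--Yorke inequality of Proposition~\ref{lasotaoscilation2} and the exponential contraction on zero-mass measures in Proposition~\ref{5.8}---has already been done, and Theorem~A is essentially a packaging of those estimates together with the abstract lift from Proposition~\ref{kjdhkskjfkjskdjf}.
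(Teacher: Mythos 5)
Your proposal is correct and follows essentially the same route as the paper: existence via Proposition~\ref{kjdhkskjfkjskdjf} applied to the $m_1$ of Lemma~\ref{existencef}, membership in $S^1$ from the bound $\|\mu_0|_\gamma\|_W \le h_1(\gamma)$ together with $h_1\in BV_m$, and uniqueness from Proposition~\ref{5.8} applied to the zero-mass difference of two invariant probabilities. Your closing verification that $\|\mu\|_1=0$ forces $\mu=0$ is a detail the paper delegates to the statement that $\|\cdot\|_1$ is a norm, but it is the same argument.
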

\begin{proof}
	Let $\mu _{0}$ be the $F$-invariant measure 
	such that $\pi _{\ast }\mu _{0}=m_1$ which do exist by Proposition \ref{kjdhkskjfkjskdjf}. Suppose that $%
	h:K\longrightarrow \mathbb{R}$ is a Lipschitz function such that $%
	|h|_{\infty }\leq 1$ and $L(h)\leq 1$. Then, it holds $\left\vert \int {h}%
	d(\mu _{0}|_{\gamma })\right\vert \leq |h|_{\infty }\leq
	1$. Hence, $\mu _{0}\in \mathcal{L}^{1}$. Since, $\dfrac{\pi_{1*}\mu_0}{dm} \equiv h_1 \in BV_m$, we have $\mu_0 \in S^1$.
	
	The uniqueness follows directly from Proposition \ref{5.8}, since the difference between two probabilities ($\mu _1 - \mu_0$) is a zero average signed measure and both are fixed points of $\func{F}_ *$.
\end{proof}

\subsection{Spectral Gap}\label{jshdjfgsjhdf}

\begin{athm}[Spectral Gap for $\func{F}_*$ on $S^1$]
	\label{spgap}Let $F:\Sigma \longrightarrow \Sigma$ ($F=(f,G)$) be a transformation, where $f \in \mathcal{T}$ and $G$ satisfies (H1). Then, the operator $\func{F}_{\ast
	}:S^{1}\longrightarrow S^{1}$ can be written as 
	\begin{equation*}
		\func{F}_{\ast }=\func{P}+\func{N},
	\end{equation*}%
	where
	
	\begin{enumerate}
		\item[a)] $\func{P}$ is a projection, i.e., $\func{P}^2 = \func{P}$ and $\dim \operatorname{Im}(\func{P}) = 1$;

		\item[b)] there are $0\leq \lambda _0 <1$ and $U\geq0$ such that $\forall \mu \in S^1$ 
		\begin{equation*}
			||\func{N}^{n}(\mu )||_{S^{1}}\leq ||\mu||_{S^{1}} \lambda _0 ^{n}U;
		\end{equation*}
		
		\item[c)] $\func{P}\func{N}=\func{N}\func{P}=0$.
	\end{enumerate}
\end{athm}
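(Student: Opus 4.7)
The plan is to build the spectral decomposition explicitly out of the unique invariant measure $\mu_0 \in S^1$ obtained in Theorem A. Define
\[
\func{P}(\mu) := \mu(\Sigma)\,\mu_0, \qquad \func{N} := \func{F}_{\ast} - \func{P}.
\]
Since $\func{F}_{\ast}$ preserves total mass and $\func{F}_{\ast}\mu_0 = \mu_0$, the identities $\func{P}\func{F}_{\ast} = \func{F}_{\ast}\func{P} = \func{P}$ follow immediately, whence $\func{P}^2 = \func{P}$, $\operatorname{Im}\func{P} = \mathbb{R}\mu_0$, and $\func{P}\func{N} = \func{N}\func{P} = 0$. A short induction using these commutation relations then yields $\func{F}_{\ast}^n = \func{P} + \func{N}^n$, so that for any $\mu \in S^1$, setting $\nu := \mu - \mu(\Sigma)\mu_0$, we have $\func{N}^n(\mu) = \func{F}_{\ast}^n(\nu)$ with $\nu \in \mathcal{V}_s$.

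The core of the proof is to upgrade the convergence to equilibrium in the weak norm $\|\cdot\|_1$ (Proposition \ref{5.8}) to exponential decay in the strong norm $\|\cdot\|_{S^1}$ for measures in $\mathcal{V}_s$. This is a standard interpolation argument using the Lasota--Yorke inequality (Proposition \ref{lasotaoscilation2}). Writing $n = 2k$ and applying the Lasota--Yorke inequality to $\func{F}_{\ast}^k\nu$, I obtain
\[
\|\func{F}_{\ast}^{2k}\nu\|_{S^1} \leq R_2 r_2^k \|\func{F}_{\ast}^k\nu\|_{S^1} + (C_2+1)\|\func{F}_{\ast}^k\nu\|_1.
\]
Applying the Lasota--Yorke inequality once more to bound $\|\func{F}_{\ast}^k\nu\|_{S^1}$ uniformly by a constant multiple of $\|\nu\|_{S^1}$, and using Proposition \ref{5.8} to dominate $\|\func{F}_{\ast}^k\nu\|_1 \leq D_2 \beta_1^k\|\nu\|_{S^1}$, the right-hand side becomes bounded by $K\rho^k\|\nu\|_{S^1}$ with $\rho := \max(r_2,\beta_1) < 1$. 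Taking $\lambda_0 := \sqrt{\rho}$ handles the even iterates; odd iterates are absorbed using the continuity of $\func{F}_{\ast}$ on $S^1$ (itself a consequence of the Lasota--Yorke estimate at $n=1$). This yields a constant $U_0$ and $\lambda_0 \in (0,1)$ with $\|\func{F}_{\ast}^n\nu\|_{S^1} \leq U_0\lambda_0^n\|\nu\|_{S^1}$ for every $\nu \in \mathcal{V}_s$ and every $n \geq 1$.

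Finally, to pass from the bound on $\nu$ to a bound on $\mu$, I observe that $|\mu(\Sigma)| = |\int \phi_1\,dm| \leq |\phi_1|_1 \leq |\phi_1|_v \leq \|\mu\|_{S^1}$, so that
\[
\|\nu\|_{S^1} \leq \|\mu\|_{S^1} + |\mu(\Sigma)|\cdot\|\mu_0\|_{S^1} \leq \bigl(1 + \|\mu_0\|_{S^1}\bigr)\|\mu\|_{S^1}.
\]
Setting $U := U_0(1 + \|\mu_0\|_{S^1})$ then gives $\|\func{N}^n(\mu)\|_{S^1} \leq U\lambda_0^n\|\mu\|_{S^1}$, completing the three required properties of the decomposition.

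The main obstacle is the interpolation step: the Lasota--Yorke inequality only provides contraction modulo a compact term controlled by the weak norm, while Proposition \ref{5.8} only delivers decay in the weak norm but with the strong norm on the right-hand side. Balancing these two ingredients correctly (via splitting the iterate $n = k + k$) is where all the care must go, but given both inequalities already available in the paper, the argument is essentially the classical Hennion--Ionescu-Tulcea one and carries through without further complications.
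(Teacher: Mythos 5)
Your proposal is correct and follows essentially the same route as the paper: the same projection $\func{P}(\mu)=\mu(\Sigma)\mu_0$ built from the unique invariant measure of Theorem \ref{gfhduer}, the same remainder $\func{N}$ (the paper writes it as $\func{F}_{\ast}\circ(\mathrm{Id}-\func{P})$, which coincides with $\func{F}_{\ast}-\func{P}$ since $\func{F}_{\ast}\mu_0=\mu_0$), and the same interpolation of the Lasota--Yorke inequality (Proposition \ref{lasotaoscilation2}) with the convergence to equilibrium (Proposition \ref{quasiquasiquasi}) by splitting $n=2m+d$ to get $\lambda_0=\max\{\sqrt{r_2},\sqrt{\beta_1}\}$. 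The only cosmetic difference is that you verify the boundedness of $\mu\mapsto\mu-\mu(\Sigma)\mu_0$ explicitly via $|\mu(\Sigma)|\leq\|\mu\|_{S^1}$, where the paper simply asserts it.
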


\begin{proof}
	First, let us show there exist $0<\lambda_0 <1$ and $U_{1}>0$ such that, for all $%
	n\geq 1$, it holds 
	\begin{equation*}
		||\func{F}_{\ast }^{n}||_{{\mathcal{V}_{s}}\rightarrow {\mathcal{V}_{s}}%
		}\leq \lambda _0 ^{n}U_{1}  
	\end{equation*}%
	where ${\mathcal{V}_{s}}$ is the zero average space defined in $($\ref{mathV}%
	$)$ and $\lambda _{0}:=\max \{\sqrt{\beta
		_{1}},\sqrt{r_2} \}$. Indeed, consider $\mu \in \mathcal{V}_{s}$ (see \eqref{mathV}) s.t. $%
	||\mu ||_{S^{1}}\leq 1$ and for a given $n\in \mathbb{N}$ let $m$ and $0\leq
	d\leq 1$ be the coefficients of the division of $n$ by $2$, i.e. $n=2m+d$.
	Thus $m=\frac{n-d}{2}$. By the Lasota-Yorke inequality (Proposition \ref%
	{lasotaoscilation2}) we have the uniform bound $||\func{F}_{\ast }^{n}\mu
	||_{S^{1}}\leq R_{2}+C_2+1$ for all $n\geq 1$. Moreover, by Propositions \ref%
	{quasiquasiquasi} and \ref{weakcontral11234} there is some $D_{2}$ such that
	it holds (below, let $\lambda _{0}$ be defined by $\lambda _{0}=\max \{\sqrt{\beta
		_{1}},\sqrt{r_2} \}$)%
	\begin{eqnarray*}
		||\func{F}_{\ast }^{n}\mu ||_{S^{1}} &\leq &R_2r_2 ^{m}||\func{F}_{\ast
		}^{m+d}\mu ||_{S^{1}}+(C_2 +1)||\func{F}_{\ast }^{m+d}\mu ||_{1} \\
		&\leq &R_2r_2 ^{m} (R_2 + C_2 +1) +(C_2+1) ||\func{F}_{\ast }^{m}\mu ||_{1} 
		\\&\leq &r_2 ^{m} R_2(R_2 + C_2 +1) +(C_2+1)D_2\beta _1^m
		\\ &\leq &r_2 ^{m} R_2(R_2 + C_2 +1) +(C_2+1)D_2\beta _1^m
		\\ &\leq &r_2 ^{\frac{n-d}{2}} R_2(R_2 + C_2 +1) +(C_2+1)D_2\beta _1^{\frac{n-d}{2}}
		\\ &\leq &r_ 2 ^{\frac{-d}{2}} r_2 ^{\frac{n}{2}} R_2(R_2 + C_2 +1) +(C_2+1)D_2 \beta_1 ^{\frac{-d}{2}}\beta _1^{\frac{n}{2}}
		\\&\leq &\lambda _{0}^{n}\left[  r_ 2 ^{-\frac{1}{2}}R_2(R_2 + C_2 +1) +  (C_2+1)D_2 \beta_1 ^{-\frac{1}{2}}  \right] 
		\\&\leq &\lambda _{0}^{n}U_1,
	\end{eqnarray*}%
	where $U_{1}=\left[  r_ 2 ^{-\frac{1}{2}}R_2(R_2 + C_2 +1) + (C_2+1)D_2 \beta_1 ^{-\frac{1}{2}}  \right]$. Thus, we arrive
	at 
	\begin{equation}
		||(\func{F}_{\ast }|_{_{\mathcal{V}_{s}}}){^{n}}||_{S^{1}\rightarrow
			S^{1}}\leq \lambda_0 ^{n}U_{1}.  \label{just}
	\end{equation}
	
	Now, recall that $\func{F}_{\ast }:S^{1}\longrightarrow S^{1}$ has an unique
	fixed point $\mu _{0}\in S^{1}$, which is a probability (see Theorem \ref%
	{gfhduer}). Consider the operator $\func{P}:S^{1}\longrightarrow \left[ \mu
	_{0}\right] $ ($\left[ \mu _{0}\right] $ is the space spanned by $\mu _{0}$%
	), defined by $\func{P}(\mu )=\mu (\Sigma )\mu _{0}$. By definition, $\func{P%
	}$ is a projection and $\dim \operatorname{Im}(\func{P})=1$. Define the operator%
	\begin{equation*}
		\func{S}:S^{1}\longrightarrow \mathcal{V}_{s},
	\end{equation*}%
	by%
	\begin{equation*}
		\func{S}(\mu )=\mu -\func{P}(\mu ),\ \ \ \mathnormal{\forall }\ \ \mu \in
		S^{1}.
	\end{equation*}%
	Thus, we set $\func{N}=\func{F}_{\ast }\circ \func{S}$ and observe that, by
	definition, $\func{P}\func{N}=\func{N}\func{P}=0$ and $\func{F}_{\ast }=%
	\func{P}+\func{N}$. Moreover, $\func{N}^{n}(\mu )=\func{F}_{\ast }{^{n}}(%
	\func{S}(\mu ))$ for all $n\geq 1$. Since $\func{S}$ is bounded and $\func{S}%
	(\mu )\in \mathcal{V}_{s}$, we get by (\ref{just}), $||\func{N}^{n}(\mu
	)||_{S^{1}}\leq \lambda_0 ^{n}U||\mu ||_{S^{1}}$, for all $n\geq 1$, where $%
	U=U_{1}||\func{S}||_{S^{1}\rightarrow S^{1}}$.
\end{proof}

\section{The Space of Bounded Variation Measures}\label{xbcvhgsafd}

In this section, we show that the disintegration of the invariant measure along $\mathcal{F}^s$ exhibits an additional degree of regularity that exceeds the mere fact that the measure belongs to the set $S^1$. In this work, we demonstrate that the disintegration of the $F$-invariant measure is of bounded variation. Studies of this kind have been conducted for other systems (see \cite{GLu}, \cite{RRR}, \cite{RR} and  \cite{DR}) and have proven fundamental, when combined with the spectral gap, for obtaining various additional properties of the dynamical system. For instance, this approach provides further insights into the set $\Theta _{\mu _0} ^1$ in equation (\ref{vbcshd}), and it has also been used to establish statistical stability in other works such as \cite{GLu} and \cite{RRRSTAB}.

\subsubsection{$\mathcal{BV}_m$-measures}
We have observed that a signed measure on $\Sigma:=I \times K$ can be disintegrated along the stable leaves $\mathcal{F}^s$ in such a way that we can view it as a family of signed measures on $I$, denoted as $\{\mu |_\gamma\}_{\gamma \in \mathcal{F}^s}$. Given that there exists a one-to-one correspondence between $\mathcal{F}^s$ and $I$, this establishment defines a path in the metric space of signed measures defined on $K$, $\mathcal{SM}(K)$, with the mapping $I \longmapsto \mathcal{SM}(K)$. In this space, $\mathcal{SM}(K)$ is equipped with the Wasserstein-Kantorovich-like metric (see Definition \ref{wasserstein}).
To make things more convenient, we employ functional notation to represent this path as $\Gamma_{\mu } : I \longrightarrow \mathcal{SM}(K)$ defined almost everywhere by $\Gamma_{\mu } (\gamma) := \mu|_\gamma$, where $(\{\mu _{\gamma }\}_{\gamma \in I},\phi_{1})$ is a disintegration of $\mu$.
However, since such a disintegration is defined for $m$-a.e $\gamma \in I$, it is crucial to note that the path $\Gamma_\mu$ is not unique. Hence, we precisely define $\Gamma_{\mu } $ as the class of almost everywhere equivalent paths that correspond to $\mu$.

\begin{definition}
	Consider a Borel signed measure $\mu \in \mathcal{AB}$ on $I \times K$ and a disintegration  $\omega=(\{\mu _{\gamma }\}_{\gamma \in I},\phi
	_1)$, where $\{\mu _{\gamma }\}_{\gamma \in I }$ is a family of
	signed measures on $K$ defined $m$-a.e. $\gamma \in I$ (and $\widehat{\mu}$-a.e. $\gamma \in I$ where $\widehat{\mu} := \pi_1{_*}\mu=\phi _1 m$) and $\phi_1$ is a marginal density defined on $I$. Denote by $\Gamma_{\mu }$ the class of equivalent paths associated to $\mu$ 
	\begin{equation*}
		\Gamma_{\mu }=\{ \Gamma^\omega_{\mu }\}_\omega,
	\end{equation*}
	where $\omega$ ranges on all the possible disintegrations of $\mu$ and $\Gamma^\omega_{\mu }: I\longrightarrow \mathcal{SM}(K)$ is the map associated to a given disintegration, $\omega$:
	$$\Gamma^\omega_{\mu }(\gamma )=\mu |_{\gamma } = \pi _{\gamma, 2*}\phi _1
	(\gamma)\mu _\gamma.$$We denote the $m$-full measure set on which $\Gamma_{\mu }^\omega$ is defined by $I_{\omega} \left( \subset I\right)$.
\end{definition}



\begin{definition}For a given disintegration $\omega$ of $\mu$ and its functional representation $\Gamma_{\mu }^\omega$ we define the \textbf{variation of $\mu$ associated to $\omega$} by

	\begin{equation}\label{Lips1}
		V_{I_{\omega}}(\Gamma_{\mu }^\omega):= \sup _{\{x_1, x_2, \cdots, x_s\} \subset I_{\omega}} \sum _{i=1}^s || \Gamma^\omega_{\mu }(x_{i+1}) - \Gamma^\omega_{\mu }(x_{i}) ||_W.
	\end{equation} Finally, we define the \textbf{variation} of the signed measure $\mu$ by

	\begin{equation}\label{Lips2}
	V_I(\mu) :=\displaystyle{\inf_{ \Gamma_{\mu }^\omega \in \Gamma_{\mu } }\{V_{I_\omega}(\Gamma_{\mu }^\omega)\}}.
	\end{equation}
	\label{Lips3}
\end{definition}

\begin{remark}
	If $\eta \subset I_\omega$, we denote by $V_{\eta}(\Gamma_{\mu }^\omega)$ the variation of the function $\Gamma_{\mu }^\omega$ restricted to the set $\eta$. Analogously, if $\eta \subset I$ is an interval, we define $$V_{\eta}(\mu) := \displaystyle{\inf_{ \Gamma_{\mu }^\omega \in \Gamma_{\mu} } V_{\eta\cap I_\omega}(\Gamma_{\mu }^\omega)}.$$ 
\end{remark}

\begin{remark}
	When no confusion is possible, to simplify the notation, we denote $\Gamma_{\mu }^\omega (\gamma )$ just by $\mu |_{\gamma } $.
\end{remark}

\begin{definition}
	From the Definition \ref{Lips3} we define the set of the \textbf{bounded variation} measures $\mathcal{BV}_m$ as
	
	\begin{equation}
		\mathcal{BV}_m=\{\mu \in \mathcal{AB}:V_I(\mu) <+\infty \}.
	\end{equation}
\end{definition}

\subsubsection{Basic Properties of $\mathcal{BV}_m$}

\begin{lemma}
	Let $\mu \in \mathcal{BV}_m$ be a measure, and let $\Gamma^\omega_{\mu} : I_\omega \longrightarrow \mathcal{SM}(K)$ be a representative associated with a disintegration $\omega = (\{\mu_\gamma\}_{\gamma \in I_\omega}, \phi_1)$.  
	If $\mathcal{Q}$ is a partition of $I_\omega$, then  
	\[
	V_{I_\omega} (\Gamma^\omega_{\mu}) = \sum_{\eta \in \mathcal{Q}} V_\eta (\Gamma^\omega_{\mu}).
	\]  
	Consequently, if $\mathcal{Q}$ is a partition of $I$ into intervals $\eta$, then  
	\[
	V_I(\mu) = \sum_{\eta \in \mathcal{Q}} V_\eta(\mu).
	\]
\end{lemma}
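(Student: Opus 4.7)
The plan is to verify both equalities by reducing them to the classical additivity of total variation for a function taking values in a normed space, applied here to the representative $\Gamma^\omega_\mu\colon I_\omega \to (\mathcal{SM}(K), \|\cdot\|_W)$. I will first establish the pointwise equality for a fixed representative by proving each inequality separately, and then obtain the second equality by passing to the infimum over disintegrations.

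For the direction $V_{I_\omega}(\Gamma^\omega_\mu) \geq \sum_{\eta \in \mathcal{Q}} V_\eta(\Gamma^\omega_\mu)$, I fix an arbitrary finite subfamily $\{\eta_{k_1}, \ldots, \eta_{k_N}\} \subset \mathcal{Q}$, arranged in the natural linear order inherited from the disjoint intervals $\eta_{k_j} \subset I$, together with $\epsilon > 0$. By the definition of $V_{\eta_{k_j}}$ as a supremum, for each $j$ I choose a finite ordered subset $S_{k_j} \subset \eta_{k_j}$ whose variation sum exceeds $V_{\eta_{k_j}}(\Gamma^\omega_\mu) - \epsilon/N$. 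The concatenation $S = \bigsqcup_{j=1}^{N} S_{k_j}$ is an ordered finite subset of $I_\omega$, each of whose consecutive pairs is either intra-$S_{k_j}$ (contributing the near-optimal local sum) or a bridge between $S_{k_j}$ and $S_{k_{j+1}}$ (contributing a non-negative term in $\|\cdot\|_W$). Hence $V_{I_\omega}(\Gamma^\omega_\mu) \geq \sum_{j=1}^{N} V_{\eta_{k_j}}(\Gamma^\omega_\mu) - \epsilon$; letting $\epsilon \to 0$ and taking the supremum over finite subfamilies yields the inequality.

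For the reverse direction $V_{I_\omega}(\Gamma^\omega_\mu) \leq \sum_{\eta \in \mathcal{Q}} V_\eta(\Gamma^\omega_\mu)$, I take any finite ordered set $\{x_1 < \cdots < x_s\} \subset I_\omega$ and split its consecutive pairs into \emph{intra-element} pairs (both endpoints in the same $\eta \in \mathcal{Q}$) and \emph{bridge} pairs (endpoints in distinct elements). Grouping intra-element pairs by their common $\eta$ produces finite subsets of $\eta$ whose aggregated variation sums are dominated by $V_\eta(\Gamma^\omega_\mu)$. To handle a bridge pair $(x_i, x_{i+1})$ with $x_i \in \eta$ and $x_{i+1} \in \eta'$, I exploit that $I_\omega$ has full $m$-measure (hence is dense in $I$) to insert auxiliary points $y_i^- \in \eta \cap I_\omega$ and $y_i^+ \in \eta' \cap I_\omega$ arbitrarily close to the common boundary between $\eta$ and $\eta'$; the triangle inequality then splits the bridge term into an intra-$\eta$ contribution, an intra-$\eta'$ contribution, and a residual $\|\Gamma^\omega_\mu(y_i^+)-\Gamma^\omega_\mu(y_i^-)\|_W$ that can be made arbitrarily small by choosing the auxiliary points closer to the boundary (using the finite total variation of $\Gamma^\omega_\mu$). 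The new intra-element contributions fold into the corresponding $V_\eta$ and $V_{\eta'}$ estimates, and passing $\epsilon \to 0$ completes the bound.

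The consequential statement follows from the pointwise equality by passing to the infimum over disintegrations. Indeed, $V_I(\mu) = \inf_\omega V_{I_\omega}(\Gamma^\omega_\mu) = \inf_\omega \sum_{\eta \in \mathcal{Q}} V_\eta(\Gamma^\omega_\mu)$, and one obtains the identity $V_I(\mu) = \sum_{\eta \in \mathcal{Q}} V_\eta(\mu)$ by noting that near-optimal disintegrations on each interval $\eta$ can be glued into a global disintegration realizing the sum, while the opposite direction is immediate from $\inf_\omega \sum \leq \sum \inf_\omega$. The main obstacle is the refinement step in the upper bound, since a priori the variations $V_\eta$ do not see transitions across the boundaries between distinct elements of $\mathcal{Q}$; the argument hinges on the combined use of the triangle inequality in $\|\cdot\|_W$, the density of $I_\omega$ in $I$, and the BV hypothesis on $\Gamma^\omega_\mu$ to force the bridge residuals to vanish in the limit.
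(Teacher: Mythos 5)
Your lower bound $V_{I_\omega}(\Gamma^\omega_\mu)\ge\sum_{\eta}V_\eta(\Gamma^\omega_\mu)$ is argued exactly as in the paper: concatenate near-optimal finite sequences chosen inside each $\eta$ and discard the non-negative bridge terms. For a partition into \emph{intervals} this is correct (one small caveat: the first displayed identity of the lemma allows arbitrary measurable $\eta$, and for interleaved, non-order-convex sets the reordered concatenation need not dominate the sum of the individual variation sums, so even this direction requires the $\eta$'s to be intervals, as you tacitly assume).

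The genuine gap is in the reverse inequality. You correctly isolate the difficulty in the bridge pairs $(x_i,x_{i+1})$ with $x_i\in\eta$, $x_{i+1}\in\eta'$, but your resolution --- that the residual $\|\Gamma^\omega_\mu(y_i^+)-\Gamma^\omega_\mu(y_i^-)\|_W$ can be made arbitrarily small by pushing $y_i^{\pm}$ toward the common boundary point $b$, ``using the finite total variation'' --- is false. Bounded variation only guarantees that the one-sided limits $L^{\pm}=\lim_{y\to b^{\pm}}\Gamma^\omega_\mu(y)$ exist; it does not force the jump $\|L^{+}-L^{-}\|_W$ to vanish. Concretely, take $\Gamma^\omega_\mu\equiv\delta_{p}$ on $[0,\tfrac12)$ and $\Gamma^\omega_\mu\equiv\delta_{q}$ on $[\tfrac12,1]$ for distinct $p,q\in K$, with $\mathcal Q=\{[0,\tfrac12),[\tfrac12,1]\}$: then $V_\eta(\Gamma^\omega_\mu)=0$ for both $\eta$, every bridge residual equals $\|\delta_q-\delta_p\|_W>0$ no matter how close $y^{\pm}$ are to $\tfrac12$, and $V_{I_\omega}(\Gamma^\omega_\mu)=\|\delta_q-\delta_p\|_W>0=\sum_\eta V_\eta(\Gamma^\omega_\mu)$. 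So the $\le$ direction cannot be recovered by your density argument; it genuinely fails whenever the path jumps at an interface of $\mathcal Q$ --- which is precisely the situation this paper permits, since $G$ may be discontinuous over $\partial I_i\times K$. (For what it is worth, the paper's own proof of this direction, ``any finite sequence can be rearranged into subsequences lying entirely within each $\eta_j$,'' silently drops the same bridge terms; you have located the true obstruction, but your fix does not remove it. Only the inequality $\sum_{J}V_{J_\omega}\le V_{I_\omega}$ is actually used later in Section 4, and that direction both you and the paper do establish for interval partitions.) Finally, in your last paragraph the ``immediate'' inequality is $\inf_\omega\sum_\eta\ge\sum_\eta\inf_\omega$, not $\le$; the direction $V_I(\mu)\le\sum_\eta V_\eta(\mu)$ is the one requiring the gluing of near-optimal disintegrations, and it inherits the bridge problem above.
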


\begin{proof}
	Let $\mathcal{Q} = \{\eta_1, \ldots, \eta_r\}$ be a partition of \(I_\omega\) into measurable (not necessarily connected) subsets.
	  
	Any finite sequence \(\{x_i\}_{i=0}^n \subset I_\omega\) used in the supremum defining \(V_{I_\omega}(\Gamma^\omega_\mu)\) can be rearranged (possibly subdivided) into subsequences lying entirely within each \(\eta_j\). Therefore,
	\[
	V_{I_\omega}(\Gamma^\omega_\mu) \leq \sum_{\eta \in \mathcal{Q}} V_\eta(\Gamma^\omega_\mu).
	\]
  
	To establish the reverse inequality, let \(\varepsilon > 0\) be arbitrary. For each \(\eta \in \mathcal{Q}\), choose a finite sequence \(\{x_i^\eta\}_{i=0}^{n_\eta} \subset \eta\) such that
	\[
	\sum_{i=1}^{n_\eta} \left\| \Gamma^\omega_\mu(x_i^\eta) - \Gamma^\omega_\mu(x_{i-1}^\eta) \right\|_W \geq V_\eta(\Gamma^\omega_\mu) - \varepsilon m(\eta).
	\]
	Concatenating all these sequences gives a finite sequence in \(I_\omega\), and hence
	\[
	V_{I_\omega}(\Gamma^\omega_\mu) \geq \sum_{\eta \in \mathcal{Q}} \left( V_\eta(\Gamma^\omega_\mu) - \varepsilon m(\eta) \right) = \sum_{\eta \in \mathcal{Q}} V_\eta(\Gamma^\omega_\mu) - \varepsilon.
	\]
	Since \(\varepsilon > 0\) is arbitrary, we conclude
	\[
	V_{I_\omega}(\Gamma^\omega_\mu) \geq \sum_{\eta \in \mathcal{Q}} V_\eta(\Gamma^\omega_\mu).
	\]Combining both inequalities, we get the equality:
	\[
	V_{I_\omega}(\Gamma^\omega_\mu) = \sum_{\eta \in \mathcal{Q}} V_\eta(\Gamma^\omega_\mu).
	\]For the second part, let \(\mathcal{Q}\) be a finite partition of \(I\). By definition,
	\[
	V_\eta(\mu) := \inf_{\Gamma^\omega_\mu \in \Gamma_\mu} V_{\eta \cap I_\omega}(\Gamma^\omega_\mu),
	\]
	and similarly,
	\[
	V_I(\mu) := \inf_{\Gamma^\omega_\mu \in \Gamma_\mu} V_{I_\omega}(\Gamma^\omega_\mu).
	\]
	Applying the first part of the lemma to each \(\Gamma^\omega_\mu\), we obtain:
	\[
	V_I(\mu) = \inf_{\Gamma^\omega_\mu \in \Gamma_\mu} \sum_{\eta \in \mathcal{Q}} V_{\eta \cap I_\omega}(\Gamma^\omega_\mu) \geq \sum_{\eta \in \mathcal{Q}} \inf_{\Gamma^\omega_\mu \in \Gamma_\mu} V_{\eta \cap I_\omega}(\Gamma^\omega_\mu) = \sum_{\eta \in \mathcal{Q}} V_\eta(\mu).
	\]

	In order to prove the reverse inequality, let $\Gamma^\omega_{\mu} : I_\omega \longrightarrow \mathcal{SM}(K)$ be a representative associated with a disintegration $\omega = (\{\mu_\gamma\}_{\gamma \in I_\omega}, \phi_1)$, and let $\eta \in \mathcal{Q}$. Then, 
	\[
	V_\eta(\mu) \leq V_{I_\omega \cap \eta}(\Gamma^\omega_{\mu}).
	\]
	Summing over all $\eta \in \mathcal{Q}$ and applying the first part of the lemma, we obtain
	\[
	\sum_{\eta \in \mathcal{Q}} V_\eta(\mu) \leq \sum_{\eta \in \mathcal{Q}} V_{I_\omega \cap \eta}(\Gamma^\omega_{\mu}) = V_{I_\omega}(\Gamma^\omega_{\mu}).
	\]
	Taking the infimum over all such disintegrations concludes the proof.

\end{proof}

\begin{lemma}
	Let $\mu^1, \mu^2 \in \mathcal{BV}_m$ be measures, and let $\Gamma^\omega_{\mu^i} : I_{\omega} \longrightarrow \mathcal{SM}(K)$ be representatives associated with disintegrations $\omega_i = (\{\mu^i_\gamma\}_{\gamma \in I_\omega}, \phi_1^i)$, for $i = 1, 2$. Then,
	\[
	V_{I_\omega}(\Gamma^\omega_{\mu^1} + \Gamma^\omega_{\mu^2}) \leq V_{I_\omega}(\Gamma^\omega_{\mu^1}) + V_{I_\omega}(\Gamma^\omega_{\mu^2}).
	\]
	Consequently,
	\[
	V_I(\mu^1 + \mu^2) \leq V_I(\mu^1) + V_I(\mu^2).
	\]
\end{lemma}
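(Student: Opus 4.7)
My plan is to reduce both inequalities to the triangle inequality for the norm $\|\cdot\|_W$ on $\mathcal{SM}(K)$, together with an $\varepsilon$-approximation argument to pass from a fixed pair of disintegrations to the infimum defining $V_I$.

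First I would handle the pointwise (path-level) inequality. Fix a finite sequence $\{x_0<x_1<\cdots<x_n\}\subset I_\omega$. For every $i$, the triangle inequality for $\|\cdot\|_W$ (which is a genuine norm on signed measures on $K$, as recorded after Definition \ref{wasserstein}) gives
\[
\bigl\|(\Gamma^{\omega}_{\mu^1}+\Gamma^{\omega}_{\mu^2})(x_{i+1})-(\Gamma^{\omega}_{\mu^1}+\Gamma^{\omega}_{\mu^2})(x_i)\bigr\|_W \leq \bigl\|\Gamma^{\omega}_{\mu^1}(x_{i+1})-\Gamma^{\omega}_{\mu^1}(x_i)\bigr\|_W + \bigl\|\Gamma^{\omega}_{\mu^2}(x_{i+1})-\Gamma^{\omega}_{\mu^2}(x_i)\bigr\|_W.
\]
Summing over $i$ and taking the supremum over all finite sequences in $I_\omega$ yields the first claimed inequality $V_{I_\omega}(\Gamma^{\omega}_{\mu^1}+\Gamma^{\omega}_{\mu^2})\leq V_{I_\omega}(\Gamma^{\omega}_{\mu^1})+V_{I_\omega}(\Gamma^{\omega}_{\mu^2})$.

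Next I would derive the inequality $V_I(\mu^1+\mu^2)\leq V_I(\mu^1)+V_I(\mu^2)$. Given $\varepsilon>0$, choose disintegrations $\omega_i$ of $\mu^i$ such that
\[
V_{I_{\omega_i}}\bigl(\Gamma^{\omega_i}_{\mu^i}\bigr)\leq V_I(\mu^i)+\varepsilon/2,\qquad i=1,2.
\]
The set $J:=I_{\omega_1}\cap I_{\omega_2}$ has full $m$-measure, and on $J$ the pointwise sum $\Gamma(\gamma):=\Gamma^{\omega_1}_{\mu^1}(\gamma)+\Gamma^{\omega_2}_{\mu^2}(\gamma)$ is a well-defined representative of $\mu^1+\mu^2$: this follows from Remark \ref{ghtyhh}, which ensures that the leafwise restriction is well defined independently of the Jordan decomposition, so additivity at the level of restrictions passes to additivity of the measure. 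Since the variation on $J$ is bounded above by the variation on each $I_{\omega_i}$, applying the first part to $\Gamma$ gives
\[
V_I(\mu^1+\mu^2)\leq V_J(\Gamma)\leq V_{I_{\omega_1}}(\Gamma^{\omega_1}_{\mu^1})+V_{I_{\omega_2}}(\Gamma^{\omega_2}_{\mu^2})\leq V_I(\mu^1)+V_I(\mu^2)+\varepsilon.
\]
Letting $\varepsilon\to 0$ concludes the proof.

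\textbf{Main obstacle.} The only real subtlety is the second part: the infima in the definition of $V_I$ are taken over classes of disintegrations, and a priori the near-optimal representatives for $\mu^1$ and $\mu^2$ are defined on different $m$-full measure sets. The fix is standard (restrict to the common full-measure intersection $I_{\omega_1}\cap I_{\omega_2}$ and use Remark \ref{ghtyhh} to identify the sum as a representative of $\mu^1+\mu^2$), but it is the point where one has to check that passing from the path-level inequality to the measure-level inequality is legitimate. Once this identification is in place, the $\varepsilon$-game is routine.
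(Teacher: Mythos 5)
Your proposal is correct and follows essentially the same route as the paper: the first inequality via the triangle inequality for $\|\cdot\|_W$ and a supremum over finite sequences, and the second by restricting two representatives to the common full-measure domain $I_{\omega_1}\cap I_{\omega_2}$, noting their sum represents $\mu^1+\mu^2$, and passing to the infimum (your explicit $\varepsilon$-selection is just an equivalent phrasing of the paper's ``take the infimum over all such representatives''). Your appeal to Remark \ref{ghtyhh} to justify that the pointwise sum of representatives is a representative of the sum is a detail the paper asserts without citation, so if anything your write-up is slightly more careful on the one genuinely delicate point.
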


\begin{proof}
	Fix a finite sequence $\{\gamma_0, \gamma_1, \dots, \gamma_n\} \subset I_\omega$. Then, by the definition of variation, we have
	\begin{align*}
		\sum_{i=1}^{n} \left\| \left( \Gamma^\omega_{\mu^1} + \Gamma^\omega_{\mu^2} \right)(\gamma_i) - \left( \Gamma^\omega_{\mu^1} + \Gamma^\omega_{\mu^2} \right)(\gamma_{i-1}) \right\|_W 
		&\leq \sum_{i=1}^{n} \left\| \Gamma^\omega_{\mu^1}(\gamma_i) - \Gamma^\omega_{\mu^1}(\gamma_{i-1}) \right\|_W 
		\\&+ \sum_{i=1}^{n} \left\| \Gamma^\omega_{\mu^2}(\gamma_i) - \Gamma^\omega_{\mu^2}(\gamma_{i-1}) \right\|_W \\
		&\leq V_{I_\omega}(\Gamma^\omega_{\mu^1}) + V_{I_\omega}(\Gamma^\omega_{\mu^2}),
	\end{align*}
	where the first inequality uses the triangle inequality for the Wasserstein-type norm. Taking the supremum over all such sequences in $I_\omega$ proves the first inequality.
	
	To obtain the second inequality, note that by definition,
	\[
	V_I(\mu^1 + \mu^2) = \inf_{\Gamma^\omega_{\mu^1 + \mu^2} \in \Gamma_{\mu^1 + \mu^2}} V_{I_\omega}(\Gamma^\omega_{\mu^1 + \mu^2}).
	\]
	For any choice of representatives $\Gamma^{\omega_1}_{\mu^1} \in \Gamma_{\mu^1}$ and $\Gamma^{\omega_2}_{\mu^2} \in \Gamma_{\mu^2}$ defined on a common domain $I_\omega:= I_{\omega_1} \cap I_{\omega_2}$, the sum $\Gamma^\omega_{\mu^1} + \Gamma^\omega_{\mu^2}$ is a representative of $\mu^1 + \mu^2$. Hence,
	\[
	V_I(\mu^1 + \mu^2) \leq V_{I_\omega}(\Gamma^\omega_{\mu^1} + \Gamma^\omega_{\mu^2}) \leq V_{I_\omega}(\Gamma^\omega_{\mu^1}) + V_{I_\omega}(\Gamma^\omega_{\mu^2}) \\ \leq V_{I_{\omega_1}}(\Gamma^\omega_{\mu^1}) + V_{I_{\omega_2}}(\Gamma^\omega_{\mu^2}).
	\]
	Taking the infimum over all such representatives concludes the proof.
\end{proof}

Observe that the upcoming lemma involves a function \( \varphi \) and a map \( \Gamma: M \to \mathcal{SM}(K) \), where \( \Gamma \) is not necessarily induced by the disintegration of a measure \( \mu \). It will be employed to estimate the variation of the product of a real-valued function and the function defined in Equation~\eqref{www}. Precisely, it will be used to estimate the right-hand side of Equation~\eqref{uiyerffd} in the proof of Proposition~\ref{iuaswdas}, where we aim to control the variation of the product between the real-valued function $g$ and the map defined in Equation~\eqref{www}.

\begin{lemma}\label{bvcdgfgsd}
	Let $\Gamma : I_\omega \longrightarrow \mathcal{SM}(K)$ be a mapping and let $\varphi : I_\omega \longrightarrow \mathbb{R}$ be a function. Then, for every $\eta \subset I_\omega$, we have
	\[
	V_\eta(\varphi \Gamma) \leq \left( \esssup_{\gamma \in \eta} |\varphi(\gamma)| \right) V_\eta(\Gamma) + \left( \esssup_{\gamma \in \eta} \|\Gamma(\gamma)\|_W \right) V_\eta(\varphi),
	\]where the essential supremum is taken with respect to the measure $m$.
\end{lemma}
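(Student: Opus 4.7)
The plan is to mimic the classical BV product rule, adapted to the vector-valued setting where one factor takes values in $(\mathcal{SM}(K), \|\cdot\|_W)$. The only properties of the Wasserstein--Kantorovich-like norm I will use are the triangle inequality and the scalar homogeneity $\|\lambda\nu\|_W = |\lambda|\,\|\nu\|_W$, both immediate from Definition~\ref{wasserstein}.

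Step by step: I fix a finite sequence $\gamma_0 \leq \gamma_1 \leq \cdots \leq \gamma_n$ in $\eta$ and apply the additive decomposition
\[
\varphi(\gamma_i)\Gamma(\gamma_i) - \varphi(\gamma_{i-1})\Gamma(\gamma_{i-1}) = \varphi(\gamma_i)\bigl[\Gamma(\gamma_i) - \Gamma(\gamma_{i-1})\bigr] + \bigl[\varphi(\gamma_i) - \varphi(\gamma_{i-1})\bigr]\Gamma(\gamma_{i-1}).
\]
Taking $\|\cdot\|_W$ on both sides, invoking the triangle inequality and scalar homogeneity, and summing over $i$ yields a bound of the form
\[
\sum_{i=1}^n \|\varphi(\gamma_i)\Gamma(\gamma_i) - \varphi(\gamma_{i-1})\Gamma(\gamma_{i-1})\|_W \leq \sum_{i=1}^n |\varphi(\gamma_i)|\,\|\Gamma(\gamma_i)-\Gamma(\gamma_{i-1})\|_W + \sum_{i=1}^n |\varphi(\gamma_i)-\varphi(\gamma_{i-1})|\,\|\Gamma(\gamma_{i-1})\|_W.
\]
Pulling out the pointwise factors $|\varphi(\gamma_i)|$ and $\|\Gamma(\gamma_{i-1})\|_W$ as the corresponding essential suprema, the two remaining partial sums are bounded by $V_\eta(\Gamma)$ and $V_\eta(\varphi)$, respectively. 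Passing to the supremum over finite sequences on the left-hand side then gives the stated inequality.

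The one delicate point---where I expect the mild obstacle to lie---is that the factors just extracted are controlled only by the \emph{essential} supremum, whereas the partial sums defining $V_\eta$ are pointwise quantities over arbitrary finite sequences in $\eta$. I would reconcile this by first replacing $\varphi$ and $\Gamma$ by representatives that agree with them $m$-a.e.\ and are redefined (for instance, set to zero) on the null sets $\{|\varphi| > \esssup_{\eta}|\varphi|\}$ and $\{\|\Gamma\|_W > \esssup_{\eta}\|\Gamma\|_W\}$. This preserves the essential suprema and the a.e.-equivalence class of $\varphi\Gamma$, and it is consistent with the paper's convention that the paths into $\mathcal{SM}(K)$ are defined only $m$-a.e.\ and that $V_\eta$ comes with an implicit infimum over representatives (cf.\ Definition~\ref{Lips3}). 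With these representatives in hand, the pointwise bound from the previous paragraph uses exactly the essential suprema, and the remainder of the argument is bookkeeping.
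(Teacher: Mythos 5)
Your proof is correct and follows essentially the same route as the paper's: the same add-and-subtract decomposition $\varphi(\gamma_i)\Gamma(\gamma_i)-\varphi(\gamma_{i-1})\Gamma(\gamma_{i-1})=\varphi(\gamma_i)\bigl[\Gamma(\gamma_i)-\Gamma(\gamma_{i-1})\bigr]+\bigl[\varphi(\gamma_i)-\varphi(\gamma_{i-1})\bigr]\Gamma(\gamma_{i-1})$, the triangle inequality and homogeneity of $\|\cdot\|_W$, and a supremum over finite sequences. Your extra paragraph on reconciling the \emph{essential} suprema with the pointwise partial sums addresses a point the paper's proof silently glosses over (it bounds $|\varphi(\gamma_i)|$ by $\esssup|\varphi|$ as if it were a true supremum), and your fix via a.e.\ representatives is the right way to make that step rigorous within the paper's conventions.
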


\begin{proof}
	Let $\{\gamma_0, \gamma_1, \dots, \gamma_n\} \subset \eta$ be an arbitrary finite sequence. Consider the sum
	\[
	\sum_{i=1}^n \left\| \varphi(\gamma_i) \Gamma(\gamma_i) - \varphi(\gamma_{i-1}) \Gamma(\gamma_{i-1}) \right\|_W.
	\]
	By adding and subtracting $\varphi(\gamma_i) \Gamma(\gamma_{i-1})$ inside each term and applying the triangle inequality, we obtain:
	\begin{align*}
		&\sum_{i=1}^n \left\| \varphi(\gamma_i) \Gamma(\gamma_i) - \varphi(\gamma_{i-1}) \Gamma(\gamma_{i-1}) \right\|_W \\
		&\leq \sum_{i=1}^n \left\| \varphi(\gamma_i) \left( \Gamma(\gamma_i) - \Gamma(\gamma_{i-1}) \right) \right\|_W 
		+ \sum_{i=1}^n \left\| \left( \varphi(\gamma_i) - \varphi(\gamma_{i-1}) \right) \Gamma(\gamma_{i-1}) \right\|_W
		\\&\leq \sum_{i=1}^n |\varphi(\gamma_i)| \cdot \left\| \Gamma(\gamma_i) - \Gamma(\gamma_{i-1}) \right\|_W 
		+ \sum_{i=1}^n |\varphi(\gamma_i) - \varphi(\gamma_{i-1})| \cdot \left\| \Gamma(\gamma_{i-1}) \right\|_W \\
		&\leq \left( \esssup_{\gamma \in \eta} |\varphi(\gamma)| \right) \sum_{i=1}^n \left\| \Gamma(\gamma_i) - \Gamma(\gamma_{i-1}) \right\|_W 
	\\	&+ \left( \esssup_{\gamma \in \eta} \|\Gamma(\gamma)\|_W \right) \sum_{i=1}^n |\varphi(\gamma_i) - \varphi(\gamma_{i-1})|.
	\end{align*}
	
	Taking the supremum over all such finite sequences in $\eta$ gives:
	\[
	V_\eta(\varphi \Gamma) \leq \left( \esssup_{\gamma \in \eta} |\varphi(\gamma)| \right) V_\eta(\Gamma) + \left( \esssup_{\gamma \in \eta} \|\Gamma(\gamma)\|_W \right) V_\eta(\varphi),
	\]
	as desired.
\end{proof}

\begin{lemma} 
	Let $\mu \in \mathcal{BV}_m$ be a measure, and let $\Gamma^\omega_{\mu} : I_\omega \longrightarrow \mathcal{SM}(K)$ be a representative associated with a disintegration $\omega = (\{\mu_\gamma\}_{\gamma \in I_\omega}, \phi_1)$. Suppose that $h : \eta \rightarrow h(\eta)$ is a homeomorphism between subintervals of $I$. Then,
	\[
	V_{\eta \cap I_\omega}(\Gamma^\omega_{\mu} \circ h) = V_{h(\eta \cap I_\omega)}(\Gamma^\omega_{\mu}).
	\]
\end{lemma}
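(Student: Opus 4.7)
The plan is to establish the equality by showing that each side is bounded above by the other, exploiting the bijective correspondence provided by the homeomorphism $h$ between finite sequences in $\eta \cap I_\omega$ and $h(\eta \cap I_\omega)$.

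First, I would observe that since $h \colon \eta \to h(\eta)$ is a homeomorphism between subintervals, its restriction to $\eta \cap I_\omega$ is a bijection onto $h(\eta \cap I_\omega)$. Given any finite sequence $\{x_0, x_1, \dots, x_n\} \subset \eta \cap I_\omega$, the image $\{h(x_0), \dots, h(x_n)\}$ is a finite sequence in $h(\eta \cap I_\omega)$, and conversely every finite sequence in $h(\eta \cap I_\omega)$ arises in this way via $h^{-1}$.

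Next, the key identity is that for any such sequence,
\[
\sum_{i=1}^n \bigl\| (\Gamma^\omega_\mu \circ h)(x_i) - (\Gamma^\omega_\mu \circ h)(x_{i-1}) \bigr\|_W = \sum_{i=1}^n \bigl\| \Gamma^\omega_\mu(h(x_i)) - \Gamma^\omega_\mu(h(x_{i-1})) \bigr\|_W,
\]
which is immediate from the definition of composition. Taking the supremum of the left-hand side over finite sequences $\{x_i\} \subset \eta \cap I_\omega$ yields $V_{\eta \cap I_\omega}(\Gamma^\omega_\mu \circ h)$, while taking the supremum of the right-hand side over the corresponding sequences $\{h(x_i)\} \subset h(\eta \cap I_\omega)$ yields $V_{h(\eta \cap I_\omega)}(\Gamma^\omega_\mu)$. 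Because the bijection $h$ sets up a one-to-one correspondence between the two families of admissible sequences, the two suprema coincide, giving the claimed equality.

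There is no real obstacle here; the statement is essentially a tautology reflecting that the variation (as defined in Equation~\eqref{Lips1}) depends only on the set of ordered tuples of sample points, not on the parameter used to label them. The only minor care needed is to note that the bijection $h$ on the ambient intervals restricts to a bijection on the measurable subsets $\eta \cap I_\omega$ and $h(\eta \cap I_\omega)$, so that the families of finite sequences over which the suprema are taken are in canonical correspondence; this is ensured by $h$ being a homeomorphism of intervals.
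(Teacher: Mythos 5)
Your proof is correct and follows essentially the same route as the paper's: both establish the two inequalities by transporting finite sequences back and forth through the bijection $h$ and noting that the sums of consecutive Wasserstein differences are literally identical under relabelling. The only point worth a passing remark (which neither you nor the paper dwells on) is that a homeomorphism of intervals is monotone, so the image of an ordered sample sequence is again ordered (possibly after reversal, which does not change the sum), keeping the sequences admissible for the supremum.
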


\begin{proof}
	Fix a finite sequence $\{\gamma_0, \gamma_1, \dots, \gamma_n\} \subset \eta \cap I_\omega$. Since $h$ is a homeomorphism, the image $\{h(\gamma_0), h(\gamma_1), \dots, h(\gamma_n)\}$ is a finite sequence in $h(\eta \cap I_\omega)$, and we have
	\[
	\sum_{i=1}^n \left\| \Gamma^\omega_\mu(h(\gamma_i)) - \Gamma^\omega_\mu(h(\gamma_{i-1})) \right\|_W = \sum_{i=1}^n \left\| (\Gamma^\omega_\mu \circ h)(\gamma_i) - (\Gamma^\omega_\mu \circ h)(\gamma_{i-1}) \right\|_W.
	\]
	Taking the supremum over all such sequences in $\eta \cap I_\omega$ gives
	\[
	V_{\eta \cap I_\omega}(\Gamma^\omega_\mu \circ h) \leq V_{h(\eta \cap I_\omega)}(\Gamma^\omega_\mu).
	\]
	
	Conversely, let $\{\gamma_0', \gamma_1', \dots, \gamma_n'\} \subset h(\eta \cap I_\omega)$ be a finite sequence. Since $h$ is a homeomorphism, the inverse sequence $\{h^{-1}(\gamma_0'), \dots, h^{-1}(\gamma_n')\}$ lies in $\eta \cap I_\omega$, and we have
	\[
	\sum_{i=1}^n \left\| \Gamma^\omega_\mu(\gamma_i') - \Gamma^\omega_\mu(\gamma_{i-1}') \right\|_W = \sum_{i=1}^n \left\| (\Gamma^\omega_\mu \circ h)(h^{-1}(\gamma_i')) - (\Gamma^\omega_\mu \circ h)(h^{-1}(\gamma_{i-1}')) \right\|_W.
	\]
	Taking the supremum over all such sequences in $h(\eta \cap I_\omega)$ gives
	\[
	V_{h(\eta \cap I_\omega)}(\Gamma^\omega_\mu) \leq V_{\eta \cap I_\omega}(\Gamma^\omega_\mu \circ h).
	\]Combining both inequalities, we conclude the desired identity.
\end{proof}

\subsubsection{Properties of the action of $\func{F}_*$ on $\mathcal{BV}_m$}

In the next set of results, for a given path, $\Gamma _\mu ^\omega$, which represents the measure $\mu$, we define for each $x \in I_{\omega}\subset I$ and $\gamma = \gamma_x$, the function

\begin{equation}\label{www}
	\mu _F(x) := \func{F_\gamma }_*\mu|_\gamma,
\end{equation}where $F_\gamma :K \longrightarrow K$ is defined as

\begin{equation}\label{poier}
	F_\gamma (y) = \pi_2 \circ F \circ {(\pi _2|_\gamma)} ^{-1}(y)
\end{equation}and $\pi_2 : M\times K \longrightarrow  K$ is the projection $\pi_2(x,y)=y$.

\begin{lemma}\label{apppoas2}
		Let $F:\Sigma \longrightarrow \Sigma$ ($F=(f,G)$) be a transformation, where $f \in \mathcal{T}$ and $G$ satisfies (H1) and (H2). Then, for all positive measures $\mu \in \mathcal{BV}_m$, such that $\phi_1$ is constant $m$-a.e., it holds $$||\func{F}%
	_{x*}\mu |_x - \func{F}%
	_{y*}\mu |_y||_W \leq \alpha ||\mu |_x - \mu |_y||_W  + |G|_{\lip} d_1(x, y) ||\mu|_ y ||_W,$$ for all $x,y \in I_i \cap I_{\omega}$ and all $i=1, 2, \cdots$.
\end{lemma}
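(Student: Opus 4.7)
The plan is to apply the triangle inequality to split the left-hand side into two pieces, each of which is controlled by a result already established in the excerpt. Specifically, I would write
\begin{equation*}
\|\func{F}_{x*}\mu|_x - \func{F}_{y*}\mu|_y\|_W \leq \|\func{F}_{x*}\mu|_x - \func{F}_{x*}\mu|_y\|_W + \|\func{F}_{x*}\mu|_y - \func{F}_{y*}\mu|_y\|_W
\end{equation*}
and treat the two terms separately. The first term should produce the contraction factor $\alpha$ using Lemma \ref{quasicontract}, while the second should produce the Lipschitz-in-$x$ error $|G|_{\lip}\, d_1(x,y)\, \|\mu|_y\|_W$ using hypothesis (H2).

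For the first term, the key observation is that since $\phi_1$ is constant $m$-a.e., we have $\mu|_x(K) = \phi_1(x) = \phi_1(y) = \mu|_y(K)$, so the signed measure $\nu := \mu|_x - \mu|_y$ is a zero-mass signed measure on $K$. Applying Lemma \ref{quasicontract} with the fiber map $F_x$ (which is an $\alpha$-contraction by (H1)) yields
\begin{equation*}
\|\func{F}_{x*}\mu|_x - \func{F}_{x*}\mu|_y\|_W = \|\func{F}_{x*}\nu\|_W \leq \alpha \|\nu\|_W = \alpha \|\mu|_x - \mu|_y\|_W.
\end{equation*}

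For the second term, I would expand the Wasserstein norm by its definition:
\begin{equation*}
\|\func{F}_{x*}\mu|_y - \func{F}_{y*}\mu|_y\|_W = \sup_{|h|_\infty \leq 1,\, L(h) \leq 1}\left|\int_K (h \circ F_x - h \circ F_y)\, d\mu|_y\right|.
\end{equation*}
For admissible $h$ and any $z \in K$, the definition of $F_\gamma$ in \eqref{poier} together with (H2) (valid because $x,y$ lie in the same atom $I_i$) gives
\begin{equation*}
|h \circ F_x(z) - h \circ F_y(z)| \leq L(h)\, d_2(G(x,z), G(y,z)) \leq |G|_{\lip}\, d_1(x,y).
\end{equation*}
Since $\mu|_y$ is a positive measure with total mass $\phi_1(y) \leq \|\mu|_y\|_W$ (the latter inequality obtained by testing against $h \equiv 1$), the integral is bounded by $|G|_{\lip}\, d_1(x,y)\, \|\mu|_y\|_W$, and adding the two estimates completes the proof.

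I do not expect a serious obstacle here: the hypotheses are tailored precisely so that each of the two pieces in the triangle decomposition matches one of the two structural assumptions, (H1) (via Lemma \ref{quasicontract}) and (H2). The only minor subtlety is to recognize that the hypothesis $\phi_1$ constant is exactly what is needed to guarantee the zero-mass condition required to invoke Lemma \ref{quasicontract}; without it, Lemma \ref{quasicontract} would produce an extra term $|\phi_1(x) - \phi_1(y)|$ that is not present in the claimed inequality.
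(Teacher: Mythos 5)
Your proof is correct and follows essentially the same route as the paper: the same triangle-inequality decomposition at $\func{F}_{x*}\mu|_y$, Lemma \ref{quasicontract} applied to the zero-mass measure $\mu|_x-\mu|_y$ (using that $\phi_1$ is constant) for the first term, and the pointwise estimate $d_2(G(x,z),G(y,z))\leq |G|_{\lip}\,d_1(x,y)$ from (H2) integrated against $\mu|_y$ for the second. No gaps.
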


\begin{proof}

	Since $\phi_1$ is constant, we have $ (\mu|_x - \mu|_y)(K)=\phi_1(x) - \phi_1(y)=0$. Then, by Lemma \ref{quasicontract}, for all $x,y \in I_\omega \cap I_i$ it holds
	\begin{eqnarray*}
		||\func{F}%
		_{x  \ast }\mu |_{x  } - \func{F}%
		_{y \ast }\mu |_{y  }||_W &\leq & ||\func{F}%
		_{x  \ast }\mu |_{x  } - \func{F}%
		_{x \ast }\mu |_{y  }||_W + ||\func{F}%
		_{x  \ast }\mu |_{y  } - \func{F}%
		_{y \ast }\mu |_{y  }||_W
		\\&\leq & \alpha||\mu |_{x  } - \mu |_{y }||_W + ||\func{F}%
		_{x  \ast }\mu |_{y  } - \func{F}%
		_{y \ast }\mu |_{y  }||_W
		\\&\leq & \alpha ||\mu |_{x  } - \mu |_{y }||_W + ||\func{F}%
		_{x  \ast }\mu |_{y  } - \func{F}%
		_{y \ast }\mu |_{y  }||_W.
	\end{eqnarray*}Let us estimate the second summand $||\func{F}%
	_{x  \ast }\mu |_{y  } - \func{F}%
	_{y \ast }\mu |_{y  }||_W$. To do it, let $h:K \longrightarrow \mathbb{R}$ be a Lipschitz function s.t. $L(h), |h|_\infty \leq 1$. By equation (\ref{poier}) and (H2), we get 
	
	\begin{align*}
		\begin{split}
			\left|\int hd(\func{F}_{x\ast}\mu|_y)-\int hd(\func{F}_{y\ast}\mu|_y) \right|&=\left|\int\!{h(G(x,z))}d(\mu|_y)(z)\right.\\
			&\qquad\left.-\int\!{h(G(y,z))}d(\mu|_y)(z) \right|
		\end{split}
		\\&\leq\int{\left|G(x,z)-G(y,z)\right|}d(\mu|_y)(z)
		\\&\leq|G|_{\lip} d_1(x,y) \int{1}d(\mu|_y)(z) 
		\\&\leq|G|_{\lip} d_1(x,y) ||\mu|_y||_W.
	\end{align*}Thus, taking the supremum over $h$, we get 
	
	\begin{equation*}
		||\func{F}%
		_{x  \ast }\mu |_{y  } - \func{F}%
		_{y \ast }\mu |_{y  }||_W \leq|G|_{\lip} d_1(x,y) ||\mu|_y||_W.
	\end{equation*}
	
\end{proof}

\begin{proposition}\label{222}
		Let $F:\Sigma \longrightarrow \Sigma$ ($F=(f,G)$) be a transformation, where $f \in \mathcal{T}$ and $G$ satisfies (H1) and (H2). Then, for all positive measures $\mu \in \mathcal{BV}_m$, such that $\phi_1$ is constant $m$-a.e., it holds $$V_{I_{\omega} \cap I_i} (\mu _F) \leq \alpha V_{I_{\omega} \cap I_i}(\Gamma^\omega _\mu)   + |G|_{\lip} \int _{I_i} {||\mu |_ y||_W}dm(y),$$ for all $i=1, 2, \cdots$.
\end{proposition}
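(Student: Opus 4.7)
The plan is to apply Lemma \ref{apppoas2} termwise inside the sum defining the variation of $\mu_F$, and then to exploit positivity together with the constancy of $\phi_1$ to turn the remaining Lipschitz-type remainder into the integral on the right-hand side.

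In detail, I fix $i \in \mathbb{N}$ and a representative $\Gamma^\omega_\mu$ of $\mu$, and let $x_0 < x_1 < \cdots < x_n$ be any finite ordered sequence in $I_\omega \cap I_i$. Since $\mu$ is positive, $\phi_1$ is constant $m$-a.e., and both $x_{k-1},x_k$ lie in $I_\omega \cap I_i$, Lemma \ref{apppoas2} applies to each consecutive pair and yields
\begin{equation*}
\|\mu_F(x_k) - \mu_F(x_{k-1})\|_W \;\leq\; \alpha\, \|\mu|_{x_k} - \mu|_{x_{k-1}}\|_W \;+\; |G|_{\lip}\,(x_k - x_{k-1})\,\|\mu|_{x_{k-1}}\|_W .
\end{equation*}
Summing over $k=1,\ldots,n$, the first contribution is bounded above by $\alpha\, V_{I_\omega \cap I_i}(\Gamma^\omega_\mu)$ directly from the definition of the variation.

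For the second contribution, the decisive observation is that, for a positive measure, the test function $h \equiv 1$ (which is admissible in Definition \ref{wasserstein} because $\mathrm{diam}(K)=1$) attains the supremum in $\|\cdot\|_W$, so that
\begin{equation*}
\|\mu|_y\|_W \;=\; \mu|_y(K) \;=\; \phi_1(y) \qquad \text{for } m\text{-a.e. } y.
\end{equation*}
By hypothesis, $\phi_1$ is essentially constant on $I$, say $\phi_1 \equiv c$. Therefore
\begin{equation*}
|G|_{\lip}\sum_{k=1}^n (x_k - x_{k-1})\,\|\mu|_{x_{k-1}}\|_W \;=\; |G|_{\lip}\,c\,(x_n - x_0) \;\leq\; |G|_{\lip}\,c\,m(I_i) \;=\; |G|_{\lip}\int_{I_i}\|\mu|_y\|_W\,dm(y).
\end{equation*}
Taking the supremum over all finite ordered sequences in $I_\omega \cap I_i$ combines the two bounds and yields the claimed inequality.

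The only non-routine step is the identification $\|\mu|_y\|_W = \phi_1(y)$ coupled with the constancy of $\phi_1$: without it, the elementary telescoping estimate $\sum (x_k - x_{k-1})\|\mu|_{x_{k-1}}\|_W \leq \int_{I_i}\|\mu|_y\|_W\, dm$ would not follow from a plain Riemann-sum argument and would require further regularity of $y \mapsto \|\mu|_y\|_W$; the hypothesis that $\phi_1$ is constant is precisely what makes this step trivial.
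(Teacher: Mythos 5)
Your proof is correct and follows essentially the same route as the paper: apply Lemma \ref{apppoas2} to each consecutive pair, sum, and take the supremum over finite sequences in $I_\omega\cap I_i$. The only difference is that you make explicit the step the paper leaves implicit, namely that $\|\mu|_y\|_W=\phi_1(y)\equiv c$ for a positive measure with constant marginal density, so the remainder telescopes to $c\,(x_n-x_0)\leq c\,m(I_i)=\int_{I_i}\|\mu|_y\|_W\,dm$; this is a welcome clarification rather than a deviation.
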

\begin{proof}
	
	Fix $i$, and consider a finite sequence $\{x_0, \dots, x_s\} \subset I_\omega \cap I_i$. By Lemma \ref{apppoas2}, it holds $$\sum_{j=1}^s ||\mu _F (x_ i) - \mu _F(x_{i-1})||_W \leq \alpha \sum_{j=1}^s  ||\mu |_{x_i  } - \mu |_{x_{i-1}}||_W  + |G|_{\lip} d_1(x_i, x_{i-1}) ||\mu|_ {x_{i-1}} ||_W.$$We finish the proof by taking the supremum over all finite sequences of $I_\omega \cap I_i$.

\end{proof}We immediately get the following corollary.

\begin{corollary}\label{22s2}
		Let $F:\Sigma \longrightarrow \Sigma$ ($F=(f,G)$) be a transformation, where $f \in \mathcal{T}$ and $G$ satisfies (H1) and (H2). Then, for all positive measures $\mu \in \mathcal{BV}_m$, such that $\phi_1$ is constant $m$-a.e., it holds $$V_{I_{\omega}} (\mu _F) \leq \alpha V_{I_{\omega} }(\Gamma^\omega _\mu)   + |G|_{\lip} ||\mu ||_1.$$
\end{corollary}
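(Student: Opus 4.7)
The plan is to obtain the corollary as an immediate consequence of Proposition~\ref{222} by summing the estimate over the countable partition $\{I_i\}_{i\ge 1}$ provided by Definition~\ref{def1} (or \ref{def2}), and then invoking the additivity of variation across a partition together with the definition of $\|\cdot\|_1$.

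More concretely, I would fix a representative $\Gamma^\omega_\mu$ of $\mu$ and, for each $i\ge 1$, apply Proposition~\ref{222} on the measurable set $I_\omega\cap I_i$ to get
\[
V_{I_\omega\cap I_i}(\mu_F)\;\le\;\alpha\, V_{I_\omega\cap I_i}(\Gamma^\omega_\mu)\;+\;|G|_{\lip}\int_{I_i}\|\mu|_y\|_W\,dm(y).
\]
Since $\{I_\omega\cap I_i\}_{i\ge 1}$ is a partition of $I_\omega$ (up to an $m$-null set, because $m(I\setminus\bigcup_i I_i)=0$ and $I_\omega$ has full $m$-measure in $I$), the additivity-of-variation lemma gives
\[
V_{I_\omega}(\mu_F)=\sum_{i\ge 1} V_{I_\omega\cap I_i}(\mu_F),\qquad V_{I_\omega}(\Gamma^\omega_\mu)=\sum_{i\ge 1} V_{I_\omega\cap I_i}(\Gamma^\omega_\mu).
\]
Summing the per-branch inequalities and using monotone convergence on the integral term, one obtains
\[
V_{I_\omega}(\mu_F)\;\le\;\alpha\, V_{I_\omega}(\Gamma^\omega_\mu)\;+\;|G|_{\lip}\int_{I}\|\mu|_y\|_W\,dm(y)\;=\;\alpha\, V_{I_\omega}(\Gamma^\omega_\mu)\;+\;|G|_{\lip}\,\|\mu\|_1,
\]
by Definition~\ref{sdfsdfsdasd}.

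The only subtlety (the sole mildly non-routine point) is that the additivity-of-variation lemma was stated and proved for finite partitions, while here the partition $\{I_i\}$ is countably infinite. This is handled with a standard monotone argument: for any finite sub-collection $\{I_{i_1},\dots,I_{i_N}\}$, the finite version of the lemma combined with monotonicity of $V_{I_\omega}$ yields $\sum_{k=1}^N V_{I_\omega\cap I_{i_k}}(\mu_F)\le V_{I_\omega}(\mu_F)$, and letting $N\to\infty$ gives $\sum_{i\ge 1}V_{I_\omega\cap I_i}(\mu_F)\le V_{I_\omega}(\mu_F)$; conversely, any finite sequence $\{x_0,\dots,x_s\}\subset I_\omega$ witnessing the variation splits into finitely many sub-sequences lying in distinct atoms $I_i$, so the reverse inequality also holds. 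Taking the infimum over representatives $\Gamma^\omega_\mu$ in the bound above then yields the claim.
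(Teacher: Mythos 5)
Your route is exactly the one the paper intends: the paper derives Corollary~\ref{22s2} from Proposition~\ref{222} with no further argument (``we immediately get\dots''), and your fleshing-out --- sum the per-branch bounds over $i$, use additivity of the variation across the partition $\{I_\omega\cap I_i\}_i$, and monotone convergence to recover $\|\mu\|_1$ --- is the natural way to make that precise. The direction $\sum_{i}V_{I_\omega\cap I_i}(\Gamma^\omega_\mu)\le V_{I_\omega}(\Gamma^\omega_\mu)$ and the passage from finite to countable sub-collections are handled correctly.

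The one point you dismiss too quickly is the other direction, $V_{I_\omega}(\mu_F)\le\sum_i V_{I_\omega\cap I_i}(\mu_F)$, which is the inequality you actually need to pass from the summed per-branch bounds to a bound on $V_{I_\omega}(\mu_F)$. The argument ``any finite sequence splits into finitely many sub-sequences lying in distinct atoms'' silently discards the increments $\|\mu_F(x_j)-\mu_F(x_{j-1})\|_W$ for consecutive points $x_{j-1},x_j$ lying in \emph{different} atoms $I_i$. These cross-boundary increments are not harmless here: by (H2) the map $G$ is allowed to be genuinely discontinuous on each $\partial I_i\times K$, so $x\mapsto\mu_F(x)=\func{F}_{x*}\mu|_x$ can jump by an amount of order $\|\mu|_x\|_W$ at every boundary, and since there are countably many boundaries the full supremum $V_{I_\omega}(\mu_F)$ over sequences straddling the atoms can even be infinite while the right-hand side of the corollary stays finite. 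This defect is inherited from the paper's own additivity lemma (whose proof of the ``$\le$'' direction has the same omission), and the paper's subsequent use of the corollary --- e.g.\ in equation~(\ref{ieduriwre}) --- effectively treats $V_{I_\omega}(\mu_F)$ as the branchwise sum $\sum_i V_{I_\omega\cap I_i}(\mu_F)$, for which your argument is complete. So your proof is faithful to the paper, but the step you label ``standard'' is precisely the non-routine one, and as literally stated it does not follow for functions, like $\mu_F$, that jump across the partition boundaries.
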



For the next proposition and henceforth, for a given path $\Gamma _\mu ^\omega \in \Gamma_{ \mu }$ (associated with the disintegration $\omega = (\{\mu _\gamma\}_\gamma, \phi _1)$, of $\mu$), unless written otherwise, we consider the particular path $\Gamma_{\func{F}_*\mu} ^\omega \in \Gamma_{\func{F}_* \mu}$ defined by the Proposition \ref{niceformulaab}, by the expression
\begin{equation}
	\Gamma_{\func{F}_* \mu} ^\omega (x)=\sum_{i=1}^{+\infty}{\func{F}%
		_{x _i \ast }\Gamma _\mu ^\omega (x_i)g(x_i)}\chi _{f(I_i)}(x)\ \ m\mathnormal{-a.e.}\ \ x \in I.  \label{niceformulaaareer}
\end{equation}In particular, defining
\begin{equation}
	\mu_F(x):= \func{F}%
	_{x \ast }\Gamma _\mu ^\omega (x), \label{oidfj}
\end{equation} we have
\begin{equation}
	\Gamma_{\func{F}_* \mu} ^\omega (x)=\sum_{i=1}^{+\infty}{\mu_F(x_i)g(x_i)}\chi _{f(I_i)}(x)\ \ m\mathnormal{-a.e.}\ \ x \in I.  \label{niceformulaaareer2}
\end{equation}For a given $J \in \mathcal{P}$, where $\mathcal{P}=\{I_i\}_{i=1}^\infty$ is as defined in the definition of $f$, we define the function $\func{F}_* (\mu \chi _J):I_\omega \longrightarrow \mathcal{SM}(K)$ by
\begin{equation}
		(\func{F}_* (\mu \chi _J))|_x:= \sum_{i=1}^{+\infty}{\func{F}%
			_{x _i \ast }(\Gamma _\mu ^\omega (x_i) \chi _J(x_i))g(x_i)} \chi _{f(I_i)}(x)\ \ \forall \ \ x \in I_ \omega.  \label{niceformulaaareer3}
\end{equation}Note that, it holds
\begin{equation}
\Gamma_{\func{F}_* \mu} ^\omega =\sum_{J \in \mathcal{P}}\func{F}_* (\mu \chi _J). \label{jhfgjge}
\end{equation}For a given $J\in \mathcal{P}$, we denote $J_{\omega}:= J \cap I_ \omega$ and $f_J:= f|_J$. Therefore, for all $y=f_J(x)$ where $x \in J_{\omega}$, it holds (by (\ref{niceformulaaareer3}))

\begin{eqnarray*}
\func{F}_{*} (\mu \chi _J)|_y&:=& \sum_{i=1}^{+\infty}{\func{F}%
	_{x _i \ast }\mu |_{x_i} \chi _J(x_i))g(x_i)} \chi _{f(I_i)}(y) \\&=&  {\func{F}%
	_{x \ast }\mu |_{x}g(x) \chi _J(x)} \chi _{f(J)}(y) \\&=&  {\func{F}%
	_{x \ast }\mu |_{x} g(x) \chi _J(x)}\\&=&  \mu _F(x) g(x)\chi _J(x).
\end{eqnarray*} Thus,

\begin{eqnarray*}
	V_{I_{\omega}} (\func{F}_{*} (\mu \chi _J))&=& V_{I_{\omega}}(\mu _F g\chi _J) \\&=& V_J(\mu _F g).
\end{eqnarray*} Summing the above relation we get

\begin{equation}\label{ieduriwre}
	\sum _{J \in \mathcal{P}} V_{I_{\omega}} (\func{F}_{*} (\mu \chi _J))=\sum _{J \in \mathcal{P}} V_{J_\omega}(\mu _F g) = V_{I{_\omega}}(\mu _F g).
\end{equation}By equations (\ref{jhfgjge}) and (\ref{ieduriwre}), it holds 
\begin{eqnarray*}\label{ieduriwre2}
	V _{I_\omega}(\Gamma_{\func{F}_* \mu} ^\omega ) &\leq& \sum_{J \in \mathcal{P}} V _{I_\omega}(\func{F}_* (\mu \chi _J) ) \\ &=&\sum _{J \in \mathcal{P}} V_{J_\omega}(\mu _F g) \\&=& V_{I{_\omega}}(\mu _F g).
\end{eqnarray*}Thus,
\begin{equation}\label{uiyerffd}
	V _{I_\omega}(\Gamma_{\func{F}_* \mu} ^\omega ) \leq V_{I{_\omega}}(\mu _F g).
\end{equation}Equation (\ref{uiyerffd}) and Corollary \ref{22s2} yield the following proposition.

\begin{proposition}\label{iuaswdas}
		Let $F:\Sigma \longrightarrow \Sigma$ ($F=(f,G)$) be a transformation, where $f \in \mathcal{T}$ and $G$ satisfies (H1) and (H2). Then, for all positive measures $\mu \in \mathcal{BV}_m$, such that $\phi_1$ is constant $m$-a.e., it holds 
		\begin{equation}\label{uieriea}
		V(\Gamma_{\func{F}_*} ^\omega\mu)  \leq \alpha _3 V(\Gamma_{\mu}^\omega) + U_3||\mu||_1
		\end{equation}where the essential supremum is taken with respect to the measure $m$, $\alpha_3=\alpha \esssup g$ and $U_3 = |G|_{\lip} \cdot \esssup g  + V_{I{_\omega}}(g)$.
\end{proposition}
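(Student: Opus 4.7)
The plan is to assemble the proposition from three pieces already on the table: the bound $V_{I_\omega}(\Gamma^\omega_{\func{F}_*\mu}) \leq V_{I_\omega}(\mu_F g)$ derived in equation (\ref{uiyerffd}); the product-rule style inequality given by Lemma \ref{bvcdgfgsd} applied with $\varphi = g$ and $\Gamma = \mu_F$; and the contraction estimate for $V_{I_\omega}(\mu_F)$ provided by Corollary \ref{22s2}. The role of the hypothesis that $\phi_1$ is constant $m$-a.e.\ is double: it is already needed to invoke Corollary \ref{22s2}, and it lets us control $\esssup_x \|\mu_F(x)\|_W$ cleanly in terms of $\|\mu\|_1$.

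First I would record the $L^\infty$ bound on $\|\mu_F\|_W$. Since $\mu$ is positive, one has $\mu|_x = \phi_1(x)\overline{\mu}|_x$ with $\overline{\mu}|_x$ a probability, so $\|\mu|_x\|_W = \phi_1(x)$, and by Lemma \ref{niceformulaac} this dominates $\|\func{F}_{x*}\mu|_x\|_W = \|\mu_F(x)\|_W$. Because $\phi_1$ is essentially constant, say $\phi_1 \equiv c$, one gets $\esssup_x \|\mu_F(x)\|_W \leq c$, while on the other side $\|\mu\|_1 = \int \|\mu|_x\|_W\,dm = c$ (here $m(I)=1$). Hence
\[
\esssup_{x \in I_\omega} \|\mu_F(x)\|_W \leq \|\mu\|_1.
\]

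Next I would start from (\ref{uiyerffd}) and apply Lemma \ref{bvcdgfgsd} to the product $\mu_F \cdot g$ on $I_\omega$, obtaining
\[
V_{I_\omega}(\mu_F g) \leq (\esssup g)\, V_{I_\omega}(\mu_F) + (\esssup \|\mu_F\|_W)\, V_{I_\omega}(g).
\]
Inserting Corollary \ref{22s2}, $V_{I_\omega}(\mu_F) \leq \alpha V_{I_\omega}(\Gamma^\omega_\mu) + |G|_{\lip}\|\mu\|_1$, together with the bound $\esssup \|\mu_F\|_W \leq \|\mu\|_1$ from the previous step, one rewrites the right hand side as
\[
\alpha(\esssup g)\, V_{I_\omega}(\Gamma^\omega_\mu) + \bigl(|G|_{\lip}\esssup g + V_{I_\omega}(g)\bigr)\|\mu\|_1,
\]
which, after identifying the coefficients with $\alpha_3$ and $U_3$, is exactly the desired inequality (\ref{uieriea}).

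I do not expect any single step to be the main obstacle; the work is essentially bookkeeping. The only subtle point worth stating carefully is the justification that $\esssup \|\mu_F\|_W \leq \|\mu\|_1$ under the constancy of $\phi_1$, since this is where the hypothesis on the marginal density translates into the $\|\mu\|_1$-term appearing (rather than an $|\phi_1|_\infty$-term) in the final Lasota--Yorke-type bound. Everything else is a direct substitution.
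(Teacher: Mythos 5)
Your proposal is correct and follows essentially the same route as the paper's own proof: the same chain $V_{I_\omega}(\Gamma^\omega_{\func{F}_*\mu}) \leq V_{I_\omega}(\mu_F g)$ from (\ref{uiyerffd}), then Lemma \ref{bvcdgfgsd} with $\varphi = g$, then Corollary \ref{22s2}, combined with the observation that constancy of $\phi_1$ together with Lemma \ref{niceformulaac} gives $\esssup \|\mu_F\|_W \leq \|\mu\|_1$. The bookkeeping and the identification of $\alpha_3$ and $U_3$ match the paper exactly.
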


\begin{proof}  
	
	Since $\phi_1$ is constant $m$-a.e., there exists a constant $c \geq 0$ such that $|\phi_1|_1=|\phi_1|_\infty=c$. Furthermore, since by Lemma \ref{niceformulaac} all probabilities $\nu$ satisfies $||\nu||_W=1$ and $\mu$ is a positive measure, we have 
	$$||\mu||_1 = \int ||\mu|_\gamma||_W dm(\gamma)= \int|\phi_1|dm(\gamma) =c.$$By Lemma \ref{niceformulaac}, we also have that
	
	\begin{eqnarray*}
		||\mu_F(x)||_W&=& ||\func{F}
		_{x \ast }\Gamma _\mu ^\omega (x)||_W \\&\leq& || \Gamma _\mu ^\omega (x)||_W\\&=& |\phi_1(x)| \\&=&c.
	\end{eqnarray*}Thus,
	
	\begin{equation}\label{tyurt}
		\esssup  ||\mu_F||_W \leq ||\mu||_1,
	\end{equation}where the essential supremum is taken with respect to $m$.
	
	By equations (\ref{uiyerffd}) and (\ref{tyurt}), Lemma \ref{bvcdgfgsd} and Corollary \ref{22s2}, we have
	
	\begin{eqnarray*}
		V _{I_\omega}(\Gamma_{\func{F}_* \mu} ^\omega ) 
		&\leq& V_{I{_\omega}}(\mu _F g)
		\\&\leq& V_{I{_\omega}}(\mu _F)\esssup g + V_{I{_\omega}}(g) \esssup ||\mu _F||_W
\\&\leq& (\alpha \esssup g )V_{I_{\omega} }(\Gamma^\omega _\mu) + |G|_{\lip} ||\mu ||_1 \esssup g \\ &+& V_{I{_\omega}}(g) \esssup ||\mu _F||_W
\\&\leq& (\alpha \esssup g )V_{I_{\omega} }(\Gamma^\omega _\mu) + (\esssup g |G|_{\lip} + V_{I{_\omega}}(g)) ||\mu ||_1.  
\end{eqnarray*}

\end{proof}

\begin{corollary}\label{iuaswdas1}
	Let $F:\Sigma \longrightarrow \Sigma$ ($F=(f,G)$) be a transformation, where $f \in \mathcal{T}$ and $F$ satisfies (H3). Then, for all positive measures $\mu \in \mathcal{BV}_m$, such that $\phi_1$ is constant $m$-a.e., it holds ($\overline{F}:=F^k$) 
	\begin{equation}\label{uieriea}
		V(\Gamma_{\func{\overline{F}}_*} ^\omega\mu)  \leq \alpha _4 V(\Gamma_{\mu}^\omega) + U_4||\mu||_1
	\end{equation}where $\alpha_4:=\alpha^k \esssup \frac{1}{|(f^k)'|}$ and $U_4 = |G|_{\lip} \cdot \esssup \frac{1}{|(f^k)'|}  + V_{I{_\omega}}(\frac{1}{|(f^k)'|})$ and the essential supremum is taken with respect to the measure $m$.
\end{corollary}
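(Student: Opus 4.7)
The plan is to deduce this corollary as a direct consequence of Proposition~\ref{iuaswdas} applied to the iterated system $\overline{F} := F^k$, taking advantage of hypothesis (H3), which has been designed precisely so that the hypotheses required by Proposition~\ref{iuaswdas} at time $k$ hold, even when they may fail at time one.

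First I would verify that $\overline{F} = (f^k, G_k)$ fits the framework of Proposition~\ref{iuaswdas}. The base-map condition $f^k \in \mathcal{T}$ is provided by the iteration theorem at the start of Section~\ref{kjdfkjdsfkj} (items (a)--(f)), since $f \in \mathcal{T}$ by assumption. The fibered component $G_k$ satisfies (H1) with contraction rate $\alpha^k$ and satisfies (H2), directly from the statement of (H3). Thus every prerequisite of Proposition~\ref{iuaswdas} holds for $\overline{F}$, and since $\mu \in \mathcal{BV}_m$ is a positive measure whose marginal density is $m$-a.e. constant, the class of admissible measures in Proposition~\ref{iuaswdas} is unchanged when passing to the iterate.

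Next I would apply Proposition~\ref{iuaswdas} to $\overline{F}$, with $\alpha$ replaced by $\alpha^k$ and $g = 1/|f'|$ replaced by $g_k := 1/|(f^k)'|$. This yields
\[
V(\Gamma_{\func{\overline{F}}_*}^\omega \mu) \leq \bigl(\alpha^k\, \esssup g_k\bigr)\, V(\Gamma_\mu^\omega) + \bigl(|G|_{\lip}\, \esssup g_k + V_{I_\omega}(g_k)\bigr)\,||\mu||_1,
\]
which is exactly $\alpha_4\, V(\Gamma_\mu^\omega) + U_4\,||\mu||_1$ with the constants as defined in the statement.

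I do not foresee a genuine obstacle. The only subtle point is notational: the constant $|G|_{\lip}$ appearing on the right-hand side of Proposition~\ref{iuaswdas} is the Lipschitz constant defined in (\ref{jdhfjdh}), and when the proposition is applied to $\overline{F}$ it should be read as the analogous constant associated with the fiber map $G_k$ of the iterate. Under the convention implicit in hypothesis (H3) this is precisely what is meant by $|G|_{\lip}$ in the statement of the corollary, so after this identification no further argument is required.
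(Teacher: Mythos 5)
Your proposal is correct and is exactly the argument the paper intends: the corollary is stated without proof as an immediate application of Proposition~\ref{iuaswdas} to the iterate $\overline{F}=F^{k}$, with (H3) supplying (H1) at rate $\alpha^{k}$ and (H2) for $G_{k}$, and the iteration theorem supplying $f^{k}\in\mathcal{T}$. Your remark that the constant $|G|_{\lip}$ in $U_{4}$ must be read as the Lipschitz constant of the iterated fiber map $G_{k}$ is a fair observation about the paper's notation and does not affect the validity of the argument.
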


By iterating the inequality (\ref{uieriea}) obtained in Corollary \ref{iuaswdas}, along with a standard computation, we arrive at the following result, the proof of which is omitted.

\begin{corollary}\label{kjdfhkkhfdjfht}
		Let $F:\Sigma \longrightarrow \Sigma$ ($F=(f,G)$) be a transformation, where $f \in \mathcal{T}$ and $F$ satisfies (H3). Then, for all positive measures $\mu \in \mathcal{BV}_m$, such that $\phi_1$ is constant $m$-a.e., it holds ($\overline{F}:=F^k$) 
	\begin{equation}\label{erkjwr166}
	V(\Gamma_{\func{\overline{F}}_*^n \mu} ^\omega)  \leq \alpha_4^n V(\Gamma_{\mu}^\omega) + \dfrac{U_4}{1-\alpha_4}||\mu||_1,
	\end{equation}
	for all $n\geq 1$.
\end{corollary}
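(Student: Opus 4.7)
The plan is to prove this by straightforward induction on $n$, using Corollary \ref{iuaswdas1} as the one-step estimate and Proposition \ref{weakcontral11234} to control the $\|\cdot\|_1$-norm after each iterate. The base case $n=1$ is precisely the statement of Corollary \ref{iuaswdas1}, since $\sum_{j=0}^{0}\alpha_4^j = 1 \leq \frac{1}{1-\alpha_4}$ (recall $0 \leq \alpha_4 < 1$ by hypothesis (H3)).

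For the inductive step, assume the estimate holds at level $n-1$. Applying Corollary \ref{iuaswdas1} to the measure $\overline{F}_*^{n-1}\mu$ in place of $\mu$ yields
\begin{equation*}
V(\Gamma_{\overline{F}_*^{n}\mu}^\omega) \leq \alpha_4\, V(\Gamma_{\overline{F}_*^{n-1}\mu}^\omega) + U_4\,\|\overline{F}_*^{n-1}\mu\|_1.
\end{equation*}
Proposition \ref{weakcontral11234} (weak contraction on $\mathcal{L}^1$ applied to the iterate $\overline{F}=F^k$) gives $\|\overline{F}_*^{n-1}\mu\|_1 \leq \|\mu\|_1$. Substituting the inductive hypothesis into the first summand and this contraction into the second produces
\begin{equation*}
V(\Gamma_{\overline{F}_*^{n}\mu}^\omega) \leq \alpha_4^{n}\, V(\Gamma_{\mu}^\omega) + \left[\tfrac{\alpha_4 U_4}{1-\alpha_4} + U_4\right]\|\mu\|_1 = \alpha_4^{n}\, V(\Gamma_{\mu}^\omega) + \tfrac{U_4}{1-\alpha_4}\|\mu\|_1,
\end{equation*}
which is the desired bound.

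The only genuine subtlety—and the step I would flag as the main obstacle—is verifying that the one-step inequality of Corollary \ref{iuaswdas1} is legitimately applicable to each intermediate measure $\overline{F}_*^{j}\mu$, because that corollary was derived under the hypothesis that the marginal density $\phi_1$ is $m$-a.e.\ constant. This property is not automatically preserved by $\overline{F}_*$ (the marginal transforms via $\mathcal{P}_{f^k}$, which need not fix constants). To address this, I would either (i) restrict the statement to the class of measures for which the intermediate marginals remain constant (e.g.\ measures whose marginal is invariant under $f^k$, which is the situation relevant to the $F$-invariant measure constructed earlier via Proposition \ref{kjdhkskjfkjskdjf}), or (ii) observe that the estimate in Proposition \ref{iuaswdas} only used constancy of $\phi_1$ to bound $\operatorname*{ess\,sup}\|\mu_F\|_W \leq \|\mu\|_1$, a bound that in fact holds more generally (cf.\ equation (\ref{tyurt}) and Lemma \ref{niceformulaac}) whenever $\phi_1 \in L^\infty$. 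Either route closes the induction and yields the geometric series $\sum_{j=0}^{n-1}\alpha_4^{j}$, bounded uniformly by $\frac{1}{1-\alpha_4}$, which produces the stated constant.
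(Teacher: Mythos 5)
Your computation is precisely the ``standard iteration'' the paper alludes to (the paper omits the proof entirely), and the algebra is right: one-step estimate, the bound $\|\overline{F}_*^{\,j}\mu\|_1\le\|\mu\|_1$ from Proposition \ref{weakcontral11234}, and the geometric series $\sum_j\alpha_4^j\le(1-\alpha_4)^{-1}$. The difficulty, which you correctly flag but do not actually resolve, is that the inductive step applies Corollary \ref{iuaswdas1} to $\overline{F}_*^{\,n-1}\mu$, whose marginal density is $\func{P}_{f^k}^{\,n-1}\phi_1$; for $f\in\mathcal{T}$ the operator $\func{P}_{f^k}$ does not fix constants (its fixed density $h_1$ is decreasing, not constant, so Lebesgue measure is not $f$-invariant in general), hence the hypothesis ``$\phi_1$ constant $m$-a.e.'' fails for every $n\ge 2$ and the one-step estimate cannot be invoked as stated. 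This is a genuine gap, not a formality.

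Neither of your two repairs closes it. Repair (i) (restricting to measures whose marginals remain constant under iteration) excludes exactly the measure $\nu=m\times\nu_2$ to which the corollary is applied in Theorem \ref{kjdfhkkhfdjfh}: its marginal is constant only at time zero. Repair (ii) mislocates where constancy is used. It enters not only through the bound $\esssup\|\mu_F\|_W\le\|\mu\|_1$ but, more essentially, in Lemma \ref{apppoas2}: constancy gives $(\mu|_x-\mu|_y)(K)=\phi_1(x)-\phi_1(y)=0$, which is what lets Lemma \ref{quasicontract} produce the clean contraction $\alpha\,\|\mu|_x-\mu|_y\|_W$ without the extra term $|(\mu|_x-\mu|_y)(K)|$. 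For a nonconstant marginal that term survives and, summed over partitions, contributes $V(\phi_1)$ to the one-step inequality; a correct iteration would then have to control $V\bigl(\func{P}_{f^k}^{\,j}\phi_1\bigr)$ and $\bigl|\func{P}_{f^k}^{\,j}\phi_1\bigr|_\infty$ via the BV Lasota--Yorke inequality (Corollary \ref{oirtyuv}), and the resulting constant is no longer the stated $U_4/(1-\alpha_4)$. Since the paper gives no proof, the same defect is latent in the source, but your argument as written does not establish the corollary for $n\ge 2$.
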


\begin{remark}\label{kjedhkfjhksjdf}
	Taking the infimum over all $\Gamma_{ \mu } ^\omega  \in \Gamma_{ \mu }$ and all $\Gamma_{\func{F}_*^n\mu}^\omega  \in \Gamma_{\func{F}_* ^n}\mu$ on both sides of inequality (\ref{erkjwr166}), we get 
	
	\begin{equation}\label{fljghlfjdgkdg}
		V(\func{\overline{F}}_* ^n\mu)  \leq \alpha_4^n V(\mu) + \dfrac{U_4}{1-\alpha_4}||\mu||_1, 
	\end{equation}for all positive measures $\mu \in \mathcal{BV}_m$ such that $\phi_1$ is constant $m$-almost everywhere.
	
	The above equation (\ref{fljghlfjdgkdg}) will give a uniform bound (see the proof of Theorem \ref{kjdfhkkhfdjfh}) for the variation of the measure $\func{F}_*^{n} \nu$, for all $n$. Where $\nu$ is defined as the product $\nu =m \times \nu_2$, for a fixed probability measure $\nu_2$ on $K$ (see the following Remark \ref{riirorpdf}). The uniform bound will be useful later on (see Theorem \ref{disisisi}).
	
\end{remark}

\begin{remark}\label{riirorpdf}
	Consider the probability measure $\nu$ defined in Remark \ref{kjedhkfjhksjdf}, i.e., $\nu=m \times \nu_2$, where $\nu_2$ is a given probability measure on $K$ and $m$ is the Lebesgue measure on $I$. Besides that, consider its trivial disintegration $\omega_0 =(\{\nu_{ \gamma}  \}_{\gamma}, \phi_1)$, given by $\nu_{ \gamma} = \func{\pi _{2,\gamma}^{-1}{_*}}\nu_2$, for all $\gamma$ and $\phi _1 \equiv 1$. According to this definition, it holds that 
	\begin{equation*}
		\nu|_\gamma = \nu_2, \ \ \forall \ \gamma.
	\end{equation*}In other words, the path $\Gamma ^{\omega _0}_{\nu}$ is constant: $\Gamma ^{\omega _0}_{\nu} (\gamma)= \nu_2$ for all $\gamma$. Hence, $\nu \in \mathcal{BV}_m$.  Moreover, for each $n \in \mathbb{N}$, let $\omega_n$ be the particular disintegration of the measure $\func{\overline{F}}_*^n\nu$ defined from $\omega_0$ as an application of Proposition \ref{niceformulaab} and, by a simple induction, consider the path $\Gamma^{\omega_{n}}_{\func{\overline{F}}_*^n \nu}$ associated with $\omega _n$. This path will be used in the proof of the next proposition.

\end{remark}

For the next result, recall that by Theorem \ref{gfhduer} a map $F = (f,G)$, where $f \in \mathcal{T}$ and $G$ satisfies (H1), has a unique invariant measure $\mu _{0}\in S^{1}$. We will show that $\mu_0$ admits a regular disintegration, which implies that $\mu _0$ belongs to $\mathcal{BV}_m$. Similar results for other classes of systems can be found in \cite{GLu}, \cite{RRR}, and \cite{DR}. This property will then be used to establish the exponential decay of correlations for Lipschitz functions.

\begin{athm}\label{kjdfhkkhfdjfh}
	Let $F:\Sigma \longrightarrow \Sigma$ ($F=(f,G)$) be a transformation, where $f \in \mathcal{T}$ and $F$ satisfies (H3). Let $\mu_0$ be its unique invariant measure in $S^1$. Then, $\mu_0 \in \mathcal{BV}_m$ and it holds 
	\begin{equation}\label{erkjwr}
		V(\mu_0)  \leq  \dfrac{U_4}{1-\alpha_4},
	\end{equation}where $\alpha_4:=\alpha^k \esssup \frac{1}{|(f^k)'|}$ and $U_4 = |G|_{\lip} \cdot \esssup \frac{1}{|(f^k)'|}  + V_{I{_\omega}}(\frac{1}{|(f^k)'|})$.
\end{athm}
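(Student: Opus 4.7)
The plan is to realize $\mu_0$ as the $\|\cdot\|_1$-limit of iterates of a well-chosen reference measure $\nu$, use Corollary~\ref{kjdfhkkhfdjfht} to obtain a uniform-in-$n$ bound on the variation of $\overline{F}_*^n\nu$, and then transfer this bound to $\mu_0$ via lower semicontinuity of $V$ under the convergence provided by the spectral gap.

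First, I would take the product measure $\nu=m\times\nu_2$ from Remark~\ref{riirorpdf}, where $\nu_2$ is any fixed probability on $K$. Its trivial disintegration $\omega_0$ has constant marginal density $\phi_1\equiv 1$ and constant path $\Gamma_\nu^{\omega_0}\equiv\nu_2$; hence $\nu\in S^1\cap\mathcal{BV}_m$, $||\nu||_1=1$, and $V_I(\Gamma_\nu^{\omega_0})=0$. Applying Corollary~\ref{kjdfhkkhfdjfht} to $\mu=\nu$ (whose $\phi_1$ is constant), with the representatives $\omega_n$ of $\overline{F}_*^n\nu$ constructed inductively from $\omega_0$ through Proposition~\ref{niceformulaab}, yields
\begin{equation*}
V_I(\Gamma_{\overline{F}_*^n\nu}^{\omega_n})\leq \alpha_4^n\cdot 0+\dfrac{U_4}{1-\alpha_4}\cdot 1=\dfrac{U_4}{1-\alpha_4}\qquad(n\geq 1).
\end{equation*}

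Next, since $\nu\in S^1$ and $\mu_0$ is the unique $F$-invariant probability in $S^1$ (Theorem~\ref{gfhduer}), the spectral decomposition of Theorem~\ref{spgap} forces $F_*^m\nu\to\mu_0$ in $||\cdot||_{S^1}$. Specializing to $m=kn$ and recalling $||\cdot||_1\leq ||\cdot||_{S^1}$ gives $\int_I ||\overline{F}_*^n\nu|_\gamma-\mu_0|_\gamma||_W\,dm(\gamma)\to 0$, so a subsequence $(n_j)$ exists along which $\overline{F}_*^{n_j}\nu|_\gamma\to\mu_0|_\gamma$ in the Wasserstein norm for $m$-a.e.\ $\gamma\in I$. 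By intersecting this pointwise-convergence set with every $I_{\omega_{n_j}}$ and with a full-measure set on which an essentially unique representative of the disintegration of $\mu_0$ is defined (provided by Theorem~\ref{rok}(d)), I obtain a full-measure subset $E'\subset I$ on which all relevant objects coexist. Choosing the representative $\omega^*$ of $\mu_0$ with domain $E'$, for every finite sequence $\{x_0,\dots,x_s\}\subset E'$ the pointwise Wasserstein convergence and the triangle inequality give
\begin{align*}
\sum_{i=1}^s ||\mu_0|_{x_i}-\mu_0|_{x_{i-1}}||_W
&= \lim_{j\to\infty}\sum_{i=1}^s ||\overline{F}_*^{n_j}\nu|_{x_i}-\overline{F}_*^{n_j}\nu|_{x_{i-1}}||_W \\
&\leq \liminf_{j\to\infty} V_I(\Gamma_{\overline{F}_*^{n_j}\nu}^{\omega_{n_j}}) \\
&\leq \dfrac{U_4}{1-\alpha_4}.
\end{align*}
Taking the supremum over such finite sequences yields $V_{E'}(\Gamma_{\mu_0}^{\omega^*})\leq U_4/(1-\alpha_4)$, and the infimum definition of $V_I$ then gives $V_I(\mu_0)\leq U_4/(1-\alpha_4)$, proving $\mu_0\in\mathcal{BV}_m$ with the desired bound.

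The main technical difficulty lies in the bookkeeping of the last step: one must carefully produce a common full-measure set $E'$ compatible with the sequence of inductively constructed representatives $\omega_{n_j}$ of the iterates $\overline{F}_*^{n_j}\nu$, with the pointwise Wasserstein convergence along $(n_j)$, and with an essentially unique representative of the disintegration of $\mu_0$, so that the finite-sum variation bound can be passed to the limit and reinterpreted as a bound on $V_I(\mu_0)$ through the infimum definition of $V$.
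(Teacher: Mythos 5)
Your proposal is correct and follows essentially the same route as the paper: take the product reference measure $\nu=m\times\nu_2$ with constant path, apply Corollary~\ref{kjdfhkkhfdjfht} for the uniform bound $U_4/(1-\alpha_4)$ on $V_I(\Gamma^{\omega_n}_{\overline{F}_*^n\nu})$, use the spectral gap to get $\mathcal{L}^1$-convergence to $\mu_0$, pass to an $m$-a.e.\ Wasserstein-convergent subsequence, and transfer the bound to finite sums for a representative of $\mu_0$ before taking the supremum and infimum. Your handling of the common full-measure set and the identification of the pointwise limit with a disintegration of $\mu_0$ is, if anything, slightly more explicit than the paper's.
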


\begin{proof}

	Consider the path $\Gamma^{\omega_n}_{\func{\overline{F}}_*^n}\nu$, defined in Remark \ref{riirorpdf}, which represents the measure $\func{\overline{F}}_*^n\nu$, where $\overline{F}:=F^k$.

	
	According to Theorem \ref{gfhduer}, let $\mu _{0}$ be the unique $F$-invariant probability measure in $S^1$. It holds that $\func{\overline{F}}_* \mu_0 = \mu_0$. Consider the measure $\nu$, defined in Remark \ref{riirorpdf} and its iterates $\func{\overline{F}}_*^n(\nu)$. By Theorem \ref{spgap}, this sequence converges to $\mu _{0}$
	in $\mathcal{L}^{1}$. It implies that a subsequence of $\{\Gamma_{\func{\overline{F}{_\ast }}^n(\nu)} ^{\omega _n}\}_{n}$ converges $m$-a.e. to $\Gamma_{\mu _{0}}^\omega\in \Gamma_{\mu_0 }$ (in $\mathcal{SB}(K)$ with respect to the metric defined in Definition \ref{wasserstein}), where $\Gamma_{\mu _{0}}^\omega$ is a path given by the Rokhlin Disintegration Theorem and $\{\Gamma_{\func{\overline{F}}_*^n(\nu)} ^{\omega_n}\}_{n}$ is defined in Remark (\ref{riirorpdf}).  Let us suppose that $\{\Gamma_{\func{\overline{F}{_\ast }}^n(\nu)} ^{\omega _n}\}_{n}$ is such a convergent subsequence. It implies that $\{\Gamma_{\func{\overline{F}}_*^n(\nu)} ^{\omega_n}\}_{n}$ converges pointwise to $\Gamma_{\mu _{0}}^\omega$ on a full measure set $I_{\omega}\subset I$. To simplify the notation, let us denote $%
	\Gamma_{n}:=\Gamma^{\omega_n}_{\func{\overline{F}}_*^n(\nu)}|_{%
		I_{\omega}}$ and $\Gamma:=\Gamma^\omega _{\mu _{0}}|_{I_{\omega}}$. Since $\{\Gamma_{n} \}_n $ converges pointwise to $\Gamma$, it holds $V_{I_{\omega}}(\Gamma_{n}) \longrightarrow V_{I_{\omega}}(\Gamma)$ as $n \rightarrow \infty$. Indeed, consider $\{x_0, x_1, \cdots, x_s \} \subset I_{\omega}$. Then,
	
	\begin{eqnarray*}
		\lim _{n \longrightarrow \infty} \sum_{i=1}^s{||\Gamma_n (x_i) - \Gamma _n(x_{i-1})||_W} &= & \sum_{i=1}^s{||\Gamma (x_i) - \Gamma (x_{i-1})||_W}.
	\end{eqnarray*} On the other hand, by equation (\ref{erkjwr166}) of Corollary \ref{kjdfhkkhfdjfht}, the argument of the left hand side is bounded by $\dfrac{U_4}{1-\alpha_4}$ for all $n\geq 1$, since $V(\Gamma)=0$ and $||\nu||_1=1$. Then, 
	\begin{eqnarray*}
		\sum_{i=1}^s{||\Gamma (x_i) - \Gamma (x_{i-1})||_W} &\leq & \dfrac{U_4}{1-\alpha_4}.
	\end{eqnarray*} Thus, $V_{I_\omega}(\Gamma^\omega_{\mu _0})\leq\dfrac{U_4}{1-\alpha_4}$ and taking the infimum over $\Gamma _ {\mu_0}$ we get $V_I(\mu _0) \leq\dfrac{U_4}{1-\alpha_4}$.
\end{proof}

\section{Exponential decay of correlations}\label{dfjgsghdfjasdf}

In this section, we will show how Theorem \ref{spgap} implies an exponential rate of convergence for the limit $$\lim_{n \to \infty} {C_n(u_1,u_2)}=0,$$where 

\begin{equation*}\label{adsfg}
C_n(u_1,u_2):=\left| \int{(u_1 \circ F^n )u_2}d\mu_0 - \int{u_1}d\mu_0\int{u_2}d\mu_0\right|,
\end{equation*} $u_1: \Sigma \longrightarrow \mathbb{R} $ is a Lipschitz function, and $u_2 \in \Theta _{\mu _0} ^1$. The set $\Theta _{\mu _0} ^1$ is defined as 
\begin{equation}\label{vbcshd}
	\Theta _{\mu _0} ^1:= \{ u: \Sigma \longrightarrow \mathbb{R}: u\mu_0 \in S^1\},
\end{equation} and the measure $u\mu_0$ is defined by $u\mu_0(E):=\int _E{u}d\mu_0$ for all measurable set $E$.

\begin{proposition}\label{hjgsdfsa}
Let $F:\Sigma \longrightarrow \Sigma$ ($F=(f,G)$) be a transformation, where $f \in \mathcal{T}$ and $G$ satisfies (H1). For all Lipschitz function $u_1: \Sigma \longrightarrow \mathbb{R} $ and all $u_2 \in \Theta _{\mu _0} ^1$, it holds $$\left| \int{(u_1 \circ F^n )  u_2}d\mu_0 - \int{u_1}d\mu_0 \int{u_2}d\mu_0 \right| \leq ||u_2 \mu _0||_{S^{1}} U |u_1|_{\lip}  \lambda_0 ^{n} \ \ \forall n \geq 1,$$where $\lambda_0$ and $U$ are from Theorem \ref{spgap} and $|u_1|_{\lip} := |u_1|_\infty + L(u_1)$. 
\end{proposition}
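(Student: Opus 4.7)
The plan is to apply Theorem~\ref{spgap} (the spectral gap) to the signed measure $u_2\mu_0$, which lies in $S^1$ by the assumption $u_2 \in \Theta_{\mu_0}^1$. The first step is to rewrite the correlation via the transfer operator: using the defining duality $\int (\varphi\circ F)\, d\mu = \int \varphi\, d\func{F}_*\mu$ and the $F$-invariance of $\mu_0$, one obtains
\begin{equation*}
\int (u_1\circ F^n) u_2 \, d\mu_0 = \int u_1 \, d\bigl(\func{F}_*^n(u_2\mu_0)\bigr).
\end{equation*}

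Next I would use the decomposition $\func{F}_* = \func{P} + \func{N}$ from Theorem~\ref{spgap}. Since $\func{P}\func{N} = \func{N}\func{P} = 0$ and $\func{P}^2 = \func{P}$, a trivial induction gives $\func{F}_*^n = \func{P} + \func{N}^n$. Using $\func{P}(\mu) = \mu(\Sigma)\mu_0$ (as defined in the proof of Theorem~\ref{spgap}) together with $(u_2\mu_0)(\Sigma) = \int u_2\, d\mu_0$, integrating the resulting identity against $u_1$ yields
\begin{equation*}
\int (u_1\circ F^n) u_2 \, d\mu_0 - \int u_1 \, d\mu_0 \int u_2\, d\mu_0 = \int u_1\, d\bigl(\func{N}^n(u_2\mu_0)\bigr),
\end{equation*}
so the whole problem reduces to estimating $\bigl|\int u_1 \, d\nu\bigr|$ for $\nu := \func{N}^n(u_2\mu_0) \in S^1$.

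For this I would establish the elementary duality bound $\bigl|\int u_1 \, d\nu\bigr| \leq |u_1|_{\lip}\,\|\nu\|_{S^1}$ for every $\nu \in S^1$. Writing $\nu$ via its fiber disintegration and noting that, for each fixed $\gamma \in I$, the function $y \mapsto u_1(\gamma,y)$ is Lipschitz on $K$ with both its Lipschitz constant and its sup-norm bounded by $|u_1|_{\lip}$, the definition of the Wasserstein-like norm in Definition~\ref{wasserstein} applied fiberwise gives
\begin{equation*}
\Bigl|\int u_1\, d\nu\Bigr| = \Bigl|\int_I \int_K u_1(\gamma,\cdot)\, d\nu|_\gamma\, dm(\gamma)\Bigr| \leq |u_1|_{\lip} \int_I \|\nu|_\gamma\|_W\, dm(\gamma) = |u_1|_{\lip}\,\|\nu\|_1 \leq |u_1|_{\lip}\,\|\nu\|_{S^1}.
\end{equation*}
Combining this with the spectral decay estimate from Theorem~\ref{spgap}(b), $\|\func{N}^n(u_2\mu_0)\|_{S^1} \leq U\lambda_0^n\,\|u_2\mu_0\|_{S^1}$, produces the claimed inequality.

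The only step requiring any genuine care, rather than being formal, is the fiberwise duality bound: it relies on reinterpreting $\int u_1\, d\nu$ through the same disintegration that defines $\|\cdot\|_1$, and on the observation that $u_1$ being Lipschitz on $\Sigma$ with the product metric $d_1+d_2$ forces each slice $u_1(\gamma,\cdot)$ to be Lipschitz on $K$ with constants controlled by $|u_1|_{\lip}$. Once this link between the product-space Lipschitz norm and the fiberwise Wasserstein test-function class is in place, the rest of the argument is a direct application of the spectral gap.
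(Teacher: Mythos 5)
Your proposal is correct and follows essentially the same route as the paper: rewrite the correlation as $\int u_1\,d(\func{F}_*^n - \func{P})(u_2\mu_0) = \int u_1\,d\func{N}^n(u_2\mu_0)$, bound this by $|u_1|_{\lip}\,\|\func{N}^n(u_2\mu_0)\|_{S^1}$ via duality, and invoke Theorem~\ref{spgap}(b). Your fiberwise justification of the duality bound is in fact slightly more explicit than the paper's (which passes through the global $\|\cdot\|_W$ norm on $\Sigma$ and then asserts $\|\cdot\|_W \leq \|\cdot\|_{S^1}$), but it is the same argument.
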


\begin{proof}
	
	Let $u_1: \Sigma \longrightarrow \mathbb{R} $ be a Lipschitz function, and $u_2 \in \Theta _{\mu _0} ^1$. By Theorem \ref{spgap}, we have
	
	\begin{eqnarray*}
		\left| \int{(u_1 \circ F^n )  u_2}d\mu_0 - \int{u_1}d\mu_0 \int{u_2}d\mu_0 \right| 
		&=& \left| \int{u_1}d \func{F^*}{^n} (u_2\mu_0) - \int{u_1}d\func{P}(u_2\mu_0) \right|
		\\&\leq& \left|\left|  \func{F^*}{^n} (u_2\mu_0) - \func{P}(u_2\mu_0) \right|\right|_W |u_1|_{\lip} 
		\\&=& \left|\left|  \func{N}{^n}(u_2\mu_0) \right|\right|_W |u_1|_{\lip} 
		\\&\leq & \left|\left|  \func{N}{^n}(u_2\mu_0) \right|\right|_{S^1} |u_1|_{\lip} 
		\\&\leq & ||u_2 \mu _0||_{S^{1}} U |u_1|_{\lip}  \lambda _0 ^{n}.
	\end{eqnarray*}
	
\end{proof}

The proof of the next lemma is analogous to the proof of Lemma 8.1 of \cite{RRR}, thus we omit it here.

\begin{lemma}\label{hdgfghddsfg}
	Let $(\{\mu_{0, \gamma}\}_\gamma, \phi_1)$ be the disintegration of $\mu _0$, along the partition $\mathcal{F}^s:=\{\{\gamma\} \times K: \gamma \in I\}$, and for a $\mu_0$ integrable function $h:I \times K \longrightarrow \mathbb{R}$, denote by $\nu$ the measure $\nu:=h\mu_0$ ($ h\mu_0(E) := \int _E {h}d\mu _0$). If $(\{\nu_{ \gamma}\}_\gamma, \widehat{\nu} )$ is the disintegration of $\nu$, where $\widehat{\nu}:=\pi_1{_*} \nu$, then it holds $\widehat{\nu} \ll m$ and $\nu _\gamma \ll \mu_{0, \gamma}$. Moreover, denoting $\overline{h}:=\dfrac{d\widehat{\nu}}{dm}$, it holds 
	\begin{equation}\label{fjgh}
		\overline{h}(\gamma)=\int_{K}{h(\gamma, y)}d(\mu_0|_\gamma)(y),
	\end{equation} and for $\widehat{\nu}$-a.e. $\gamma \in I$

	\begin{equation}\label{gdfgdgf}
		\dfrac{d\nu _{ \gamma}}{d\mu _{0, \gamma}}(y) =
		\begin{cases}
			\dfrac{h|_\gamma (y)}{\int{h|_\gamma(y)}d\mu_{0,\gamma}(y)} , \ \hbox{if} \ \gamma \in B ^c \\
			0, \ \hbox{if} \ \gamma \in B,
		\end{cases} \hbox{for all} \ \ y \in K,
	\end{equation}where $B :=  \overline{h} ^{-1}(0)$.
\end{lemma}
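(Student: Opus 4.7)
My plan is to prove both claims together by constructing an explicit disintegration and invoking the uniqueness clause (item (d)) of Rokhlin's theorem. The starting point is the formula
\[
\nu(E) \;=\; \int_\Sigma h\,\chi_E\, d\mu_0 \;=\; \int_I \Bigl(\int_K h(\gamma,y)\,\chi_E(\gamma,y)\,d\mu_{0,\gamma}(y)\Bigr)\phi_1(\gamma)\,dm(\gamma),
\]
which comes from applying item (c) of Rokhlin's theorem to $\mu_0$ with its disintegration $(\{\mu_{0,\gamma}\}_\gamma,\phi_1 m)$.

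First, I would specialize to cylinders $E=\pi_1^{-1}(A)$, so that $\chi_E(\gamma,y)=\chi_A(\gamma)$ depends only on $\gamma$. This immediately yields
\[
\widehat\nu(A) \;=\; \int_A \phi_1(\gamma)\!\!\int_K h(\gamma,y)\,d\mu_{0,\gamma}(y)\,dm(\gamma) \;=\; \int_A \overline h(\gamma)\,dm(\gamma),
\]
using the identification $d(\mu_0|_\gamma)=\phi_1(\gamma)\,d\mu_{0,\gamma}$. This proves $\widehat\nu\ll m$ and establishes formula \eqref{fjgh}.

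Next, I would define a candidate family by setting
\[
\widetilde\nu_\gamma \;:=\; \frac{h|_\gamma}{\int h|_\gamma\,d\mu_{0,\gamma}}\,\mu_{0,\gamma} \quad\text{for }\gamma\in B^c,
\]
and arbitrarily (say equal to $0$) on $B=\overline h^{-1}(0)$. By construction each $\widetilde\nu_\gamma \ll \mu_{0,\gamma}$ with the Radon--Nikodym derivative prescribed in \eqref{gdfgdgf}. The key verification is that $(\{\widetilde\nu_\gamma\}_\gamma,\widehat\nu)$ is in fact a disintegration of $\nu$ relative to $\mathcal{F}^s$, i.e.
\[
\int_I \widetilde\nu_\gamma(E\cap\gamma)\,d\widehat\nu(\gamma) \;=\; \nu(E)
\]
for every measurable $E\subset\Sigma$. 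A direct substitution shows
\[
\int_I \widetilde\nu_\gamma(E\cap\gamma)\,d\widehat\nu(\gamma) \;=\; \int_{B^c} \Bigl(\int_{E\cap\gamma} h|_\gamma\,d\mu_{0,\gamma}\Bigr)\phi_1(\gamma)\,dm(\gamma),
\]
so the claim reduces to showing that the integral over $B$ vanishes on the right-hand side of the initial Rokhlin formula for $\nu(E)$. On $B_1:=\{\phi_1=0\}\subset B$ the factor $\phi_1$ kills the integrand, and on $B\setminus B_1$ one has $\int h|_\gamma\,d\mu_{0,\gamma}=0$; handling the latter set is the main obstacle because for a signed $h$ this vanishing of the integral does \emph{not} imply $h|_\gamma\equiv 0$ pointwise. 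I would overcome this by first carrying out the argument under the assumption $h\ge 0$ (where the vanishing of $\int h|_\gamma d\mu_{0,\gamma}$ does force $h|_\gamma=0$ $\mu_{0,\gamma}$-a.e., making $\int_{E\cap\gamma}h|_\gamma d\mu_{0,\gamma}=0$), and then extending to signed $h$ by applying the positive case separately to the Jordan pieces $h=h^+-h^-$ and noting that the formulas in \eqref{fjgh}--\eqref{gdfgdgf} are linear in $h$ on the common $\widehat\nu$-null exceptional set.

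Finally, having produced a disintegration $(\{\widetilde\nu_\gamma\},\widehat\nu)$ realizing $\nu$, the uniqueness clause (d) of Theorem~\ref{rok}, valid since $\Sigma=I\times K$ is a complete separable metric space whose Borel $\sigma$-algebra is countably generated, implies $\nu_\gamma=\widetilde\nu_\gamma$ for $\widehat\nu$-a.e.\ $\gamma$. This gives simultaneously $\nu_\gamma\ll\mu_{0,\gamma}$ and the explicit density formula \eqref{gdfgdgf}, completing the proof.
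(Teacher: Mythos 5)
The paper itself omits the proof of this lemma (it points to Lemma 8.1 of \cite{RRR}), so there is no internal argument to compare against; your route --- build an explicit candidate disintegration and invoke the uniqueness clause (d) of Theorem~\ref{rok} --- is the natural one and is carried out correctly as long as $h\ge 0$: the computation of $\widehat\nu\ll m$ with density \eqref{fjgh}, the cancellation of $\overline h$ against the normalizing denominator, and the reduction to showing that the integral over $B$ vanishes are all right, and on $B$ the nonnegativity of $h$ does force $h|_\gamma=0$ $\mu_{0,\gamma}$-a.e.\ whenever $\phi_1(\gamma)\neq 0$.

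The gap is in the signed case, and your proposed Jordan-decomposition patch does not close it. Writing $h=h^+-h^-$ and applying the positive case to each piece produces disintegrations of $\nu^\pm=h^\pm\mu_0$ over the \emph{different} quotient measures $\pi_{1*}\nu^{\pm}=\overline{h^\pm}\,m$; these cannot simply be subtracted to yield a disintegration of $\nu$ over $\widehat\nu=\overline h\,m$, because $\overline{h^+}$ and $\overline{h^-}$ may cancel on fibers where $\nu$ itself does not vanish. Concretely, take $\phi_1\equiv 1$ and $h(\gamma,y)=g(y)$ with $\int g\,d\mu_{0,\gamma}=0$ for every $\gamma$ but $g\not\equiv 0$ in $L^1(\mu_{0,\gamma})$: then $\overline h\equiv 0$, so $\widehat\nu=0$ and $B=I$, while $\nu=h\mu_0\neq 0$; consequently
\[
\int_I\widetilde\nu_\gamma(E\cap\gamma)\,d\widehat\nu(\gamma)=0\neq\nu(E)
\]
for a suitable $E$, i.e.\ your candidate $(\{\widetilde\nu_\gamma\},\widehat\nu)$ is \emph{not} a disintegration of $\nu$, and indeed no family over the quotient $\widehat\nu=\pi_{1*}\nu$ can be. The same phenomenon occurs whenever $m(B\cap\{\phi_1>0\}\cap\{h|_\gamma\not\equiv 0\})>0$, which is exactly the set on which you observed that ``$\int h|_\gamma\,d\mu_{0,\gamma}=0$ does not imply $h|_\gamma\equiv 0$''; the decomposition $h=h^+-h^-$ does not make that set go away. (Your claim that the formulas are ``linear in $h$'' is also not literally true for \eqref{gdfgdgf}, whose normalization is nonlinear; what is linear is the product $\nu_\gamma(E)\,\overline h(\gamma)$.) To repair this you must either restrict the lemma to nonnegative $h$ --- or to $h$ bounded below, after adding a constant --- or reinterpret ``disintegration of the signed measure $\nu$'' in the Jordan sense of Definition~\ref{restrictionmeasure} and Remark~\ref{ghtyhh}, stating \eqref{gdfgdgf} only $|\widehat\nu|$-a.e.\ on $B^c$ where the identity $\nu_\gamma\,\overline h(\gamma)=\phi_1(\gamma)\,(h|_\gamma\,\mu_{0,\gamma})$ does follow from the two positive cases. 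Note that formula \eqref{fjgh} and $\widehat\nu\ll m$, which are all that Theorem~\ref{disisisi} actually uses, survive for signed $h$ exactly as you proved them.
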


\begin{athm}\label{disisisi}
	Let $F:\Sigma \longrightarrow \Sigma$ ($F=(f,G)$) be a transformation, where $f \in \mathcal{T}$ and $F$ satisfies (H3). Let $\mu_0 \in S^1$ be the unique $F$-invariant measure in $S^1$. Then, $\lip(\Sigma) \subset \Theta _{\mu_0} ^1$, where $\lip(\Sigma)$ is set of real Lipschitz functions defined on $\Sigma$. In particular, for all Lipschitz functions $u_1:\Sigma \longrightarrow \mathcal{R}$ and $u_2: \Sigma \longrightarrow \mathcal{R}$ it holds 
	
	\begin{equation}\label{uyrtetre}
	C_n(u_1,u_2) \leq ||u_1 \mu _0||_{S^{1}} U |u_2|_{\lip}  \lambda_0 ^{n} \ \ \forall n \geq 1,
	\end{equation}where $\lambda_0$ and $U$ are from Proposition \ref{hjgsdfsa} and $|u|_{\lip} := |u|_\infty + L(u)$. 
\end{athm}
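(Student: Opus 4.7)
\textbf{Proof plan for Theorem \ref{disisisi}.} The statement has two components: (i) the inclusion $\lip(\Sigma)\subset\Theta_{\mu_0}^1$, which says that for every Lipschitz $u:\Sigma\to\mathbb{R}$ the signed measure $u\mu_0$ lies in $S^1$; and (ii) the exponential correlation bound. Once (i) is established, (ii) is a direct application of Proposition~\ref{hjgsdfsa}, so essentially all the work is in (i).

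For the $\mathcal{L}^1$ part of (i), I would note that a Lipschitz function on the compact space $\Sigma$ is bounded, so writing $u=u^+-u^-$ and using that the Wasserstein norm of a positive measure is its total mass, one gets $\|(u\mu_0)|_\gamma\|_W\le \int|u|\,d\mu_0|_\gamma$; integrating in $\gamma$ yields $\|u\mu_0\|_1\le |u|_\infty<\infty$. For the $BV_m$ part, the marginal density of $u\mu_0$ is, by Lemma~\ref{hdgfghddsfg} applied to $u^+$ and $u^-$ separately and combined by linearity, the function
\[
\phi_1^{u\mu_0}(\gamma)=\int_K u(\gamma,y)\,d\mu_0|_\gamma(y).
\]
Here is where I would invoke Theorem~\ref{kjdfhkkhfdjfh}: fix a representative $\Gamma^\omega_{\mu_0}$ with $V_{I_\omega}(\Gamma^\omega_{\mu_0})<\infty$, and for $\gamma_1,\gamma_2\in I_\omega$ split the telescope
\[
\phi_1^{u\mu_0}(\gamma_1)-\phi_1^{u\mu_0}(\gamma_2)
=\int u(\gamma_1,\cdot)\,d\!\left(\Gamma^\omega_{\mu_0}(\gamma_1)-\Gamma^\omega_{\mu_0}(\gamma_2)\right)+\int\!\left(u(\gamma_1,\cdot)-u(\gamma_2,\cdot)\right)d\Gamma^\omega_{\mu_0}(\gamma_2).
\]
The first term is controlled by the Wasserstein-type norm of the difference of the fiber measures: since $u(\gamma_1,\cdot)/|u|_{\lip}$ has Lipschitz constant and sup norm at most $1$, this piece is bounded by $|u|_{\lip}\,\|\Gamma^\omega_{\mu_0}(\gamma_1)-\Gamma^\omega_{\mu_0}(\gamma_2)\|_W$. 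The second term is bounded by $L(u)\,d_1(\gamma_1,\gamma_2)\,h_1(\gamma_2)$, using $|u(\gamma_1,y)-u(\gamma_2,y)|\le L(u)d_1(\gamma_1,\gamma_2)$ (the product metric is $d_1+d_2$) and that $\mu_0|_{\gamma_2}(K)=h_1(\gamma_2)$.

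Summing over any finite ordered sequence in $I_\omega$ and exploiting that $\sum d_1(\gamma_i,\gamma_{i-1})\le \diam(I)=1$ and that $h_1\in BV_m$ implies $|h_1|_\infty<\infty$, I obtain
\[
V_{I_\omega}(\phi_1^{u\mu_0})\le |u|_{\lip}\,V_{I_\omega}(\Gamma^\omega_{\mu_0})+L(u)\,|h_1|_\infty.
\]
Taking the infimum over all disintegrations and using $V(\mu_0)<\infty$ from Theorem~\ref{kjdfhkkhfdjfh} yields $\phi_1^{u\mu_0}\in BV_m$, hence $u\mu_0\in S^1$ and $u\in\Theta_{\mu_0}^1$. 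Finally, given Lipschitz $u_1,u_2$, step (i) places both $u_1\mu_0$ and $u_2\mu_0$ in $S^1$, so applying Proposition~\ref{hjgsdfsa} delivers inequality~\eqref{uyrtetre}.

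The only step that is not essentially mechanical is the BV estimate of $\phi_1^{u\mu_0}$: one has to write the Leibniz-like splitting above and carefully pass to the infimum over representatives, using that Theorem~\ref{kjdfhkkhfdjfh} provides a uniform finite variation for $\mu_0$ as a map into $(\mathcal{SM}(K),\|\cdot\|_W)$. Everything else (integrability of $u\mu_0$ in the $\mathcal{L}^1$-norm, linearity across the Jordan decomposition, reduction of the correlation bound to Proposition~\ref{hjgsdfsa}) is routine.
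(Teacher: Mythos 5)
Your proposal is correct and follows essentially the same route as the paper: the same Leibniz-type splitting of $\overline{u}(\gamma_1)-\overline{u}(\gamma_2)$ into a term controlled by $\|\mu_0|_{\gamma_1}-\mu_0|_{\gamma_2}\|_W$ (via the normalized test function $u(\gamma_1,\cdot)/|u|_{\lip}$) and a term controlled by $L(u)\,d_1(\gamma_1,\gamma_2)\,|\phi_1|_\infty$, summed over finite sequences and closed by Proposition~\ref{hjgsdfsa}. The only difference is that you make explicit two points the paper leaves implicit — the $\mathcal{L}^1$ membership via boundedness of $u$, and the appeal to Theorem~\ref{kjdfhkkhfdjfh} for $V(\mu_0)<\infty$ — which is a welcome clarification rather than a deviation.
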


\begin{proof}
	Let $(\{\mu_{0, \gamma}\}_\gamma, \phi_1)$ be the disintegration of $\mu _0$ and denote by $\nu$ the measure $\nu:=h_1\mu_0$ ($ u_1\mu_0(E) := \int _E {u_1}d\mu _0$). If $(\{\nu_{ \gamma}\}_\gamma, \widehat{\nu} )$ is the disintegration of $\nu$, then by Lemma \ref{hdgfghddsfg} it holds $\widehat{\nu} \ll m$ and $\nu _\gamma \ll \mu_{0, \gamma}$. Moreover, denoting $\overline{u}_1:=\dfrac{d\widehat{\nu}}{dm}$, it holds 
	\begin{equation*}
		\overline{u}_1(\gamma)=\int_{K}{u_1(\gamma, y)}d(\mu_0|_\gamma),
	\end{equation*} and

	\begin{equation*}
		\dfrac{d\nu _\gamma}{d\mu_{0, \gamma}} (y) = 
		\begin{cases}
			\dfrac{u_1(\gamma,y) }{\overline{u}_1(\gamma)}, & \text{if } \overline{u}_1(\gamma) \neq 0 \\
			0, & \text{if } \overline{u}_1(\gamma) = 0.
		\end{cases}
	\end{equation*}It is immediate that $\nu \in \mathcal{L}^1$. 
	To verify that $\overline{u}_1 \in BV_m$, we estimate the total variation of $\overline{u}_1$. Let ${0 = \gamma_0 < \gamma_1 < \cdots < \gamma_n = 1}$ be an arbitrary finite sequence in the domain of a representative of $\overline{u}_1$. Then we have:

	\begin{eqnarray*}
		|\overline{u}_1(\gamma_{i}) - \overline{u}_1(\gamma_{i-1})| &\leq& \left|\int_{K}{u_1(\gamma _i, y)}d(\mu_0|_{\gamma_i}) - \int_{K}{u_1(\gamma _{i-1}, y)}d(\mu_0|_{\gamma_{i-1}}) \right| 
		\\&\leq& \left|\int_{K}{u_1(\gamma _i, y)}d(\mu_0|_{\gamma_i}) - \int_{K}{u_1(\gamma _{i}, y)}d(\mu_0|_{\gamma_{i-1}}) \right| 
		\\&+& \left|\int_{K}{u_1(\gamma _i, y)}d(\mu_0|_{\gamma_{i-1}}) - \int_{K}{u_1(\gamma _{i-1}, y)}d(\mu_0|_{\gamma_{i-1}}) \right| 
		\\&\leq& \left|\int_{K}{u_1(\gamma _i, y)}d(\mu_0|_{\gamma_i}-\mu_0|_{\gamma_{i-1}}) \right| 
		\\&+& \left|\int_{K}{u_1(\gamma _i, y)-u_1(\gamma _{i-1}, y)}d(\mu_0|_{\gamma_{i-1}}) \right|
		\\&\leq& ||u_1||_{\lip} ||\mu_0|_{\gamma_i}-\mu_0|_{\gamma_{i-1}}||_W + L(u_1)|\gamma_i - \gamma_{i-1}| \left|\phi _1 \right|_{\infty}.
	\end{eqnarray*}Thus, $V{(\overline{u}_1)} < \infty$ and $\overline{u}_1 \in BV_m$. By Proposition \ref{hjgsdfsa}, we conclude the proof of the theorem. 
\end{proof}


\begin{thebibliography}{99}


\bibitem{RLK}
\newblock  M. Rychlik, 
\newblock Bounded variation and invariant measures.,
\newblock \emph{Studia Mathematica}, \textbf{76}(1), 69-80, (1983).
\url{http://eudml.org/doc/218512}

\bibitem{RG2}
\newblock  P. Gora, A. Rajput,
\newblock Quasi-compactness of Frobenius-Perron Operator for Piecewise Convex Maps with Countable Branches,
\url{https://arxiv.org/abs/2406.19929}


\bibitem{HK}
\newblock F. Hofbauer and G. Keller,
\newblock{Ergodic properties of invariant measures for piecewise monotonic transformations},
\newblock{Math. Z.} \textbf{180} (1982), no. 1, 119--140. [MR0750726]


\bibitem{GLu} 
\newblock S. Galatolo and R. Lucena, 
\newblock Spectral Gap and quantitative statistical stability for systems with contracting fibers and Lorenz like maps,
\newblock \emph{Discrete and Continuous Dynamical Systems}, \textbf{40}(3) (2020). \url{https://doi.org/10.3934/dcds.2020079}


\bibitem{RRR}
\newblock  R. Bilbao, R. Bioni and R. Lucena,
\newblock Holder Regularity and Exponential Decay of Correlations for a class of Piecewise Partially Hyperbolic Maps,
\newblock \emph{Nonlinearity}, \textbf{33} (2020). \url{https://iopscience.iop.org/article/10.1088/1361-6544/aba888}

\bibitem{RRRSTAB}
\newblock  R. Bilbao, R. Bioni and R. Lucena,
\newblock Quantitative statistical stability for the equilibrium states of piecewise partially hyperbolic maps,
\newblock \emph{Discrete and Continuous Dynamical Systems}, \textbf{44}(3) (2024).\url{https://doi.org/10.3934/dcds.2023129}


\bibitem{RR}
\newblock  R. Bilbao, R. Lucena,
\newblock Thermodynamic Formalism for Discontinuous Maps and Statistical Properties for their Equilibrium States,
\url{https://arxiv.org/abs/2311.05577}


\bibitem{L}
\newblock R. Lucena,
\newblock  \emph{Spectral Gap for Contracting Fiber Systems and Applications},
\newblock  Ph.D thesis, Universidade Federal do Rio de Janeiro in Brazil, 2015. \url{https://www.pgmat.im.ufrj.br/index.php/pt-br/teses-e-dissertacoes/teses/2015-1/50-spectral-gap-for-contracting-fiber-systems-and-applications}

 

\bibitem{DR}
\newblock  D. Lima, R. Lucena,
\newblock Lipschitz regularity of the invariant measure of Random Dynamical Systems,
\newblock \emph{J. Fixed Point Theory Appl.}, \textbf{27}(44) (2025).\url{https://doi.org/10.1007/s11784-025-01196-1}



\bibitem{Kva} 
\newblock K. Oliveira and M. Viana,
\newblock \emph{Fundamentos da Teoria Erg\'odica},
\newblock  Colec\~ao Fronteiras da Matematica - SBM, Rio de Janeiro, 2014. 

\bibitem{WP}
\newblock  W. Philipp,
\newblock Some metrical theorems in number theory,
\newblock \emph{Pacific J. Math.}, \textbf{20}(1): 109-127 (1967).

\bibitem{WP2}
\newblock  W. Philipp,
\newblock Some metrical theorems in number theory II,
\newblock \emph{Duke Math. J.}, \textbf{37}(3): 447-458 (1970).


\bibitem{MSO} 
\newblock M. Kesseböhmer, S. Munday, B. O. Stratmann,
\newblock \emph{Infinite Ergodic Theory of Numbers},
\newblock  De Gruyter - Graduate, 2016. 

\end{thebibliography}
\end{document}